\newcommand{\x}{\times}
\newcommand{\<}{\langle}
\renewcommand{\>}{\rangle}
\renewcommand{\a}{\alpha}
\renewcommand{\b}{\beta}
\renewcommand{\d}{\delta}
\newcommand{\e}{\epsilon}
\newcommand{\n}{\nu}
\newcommand{\s}{\sigma}
\renewcommand{\S}{\Sigma}
\renewcommand{\t}{\tau}
\newcommand{\ph}{\varphi}
\newcommand{\z}{\zeta}
\renewcommand{\i}{\infty}
\newcommand{\cG}{{\mathcal G}}
\newcommand{\cK}{{\mathcal K}}
\newcommand{\cL}{{\mathcal L}}
\newcommand{\cT}{{\mathcal T}}
\newcommand{\cW}{{\mathcal W}}
\newcommand{\bZ}{{\mathbf Z}}
\newcommand{\bR}{{\mathbf R}}
\newcommand{\bN}{{\mathbf N}}
\newcommand{\bS}{{\mathbb S}}
\newcommand{\bB}{{\mathbb B}}
\newtheorem{lemma}{Lemma}[section]
\newtheorem{lem}{Lemma}[section]
\newtheorem{prop}{Proposition}[section]
\newtheorem{cor}{Corollary}[section]
\DeclareMathOperator{\id}{id}
\DeclareMathOperator{\Supp}{Supp}
\DeclareMathOperator{\interior}{int}
\newcommand{\cl}{\overline}
\newcommand{\lv}{\left \lvert}
\newcommand{\rv}{\right \rvert}
\newcommand{\PL}{\mathsf{PL}_+}
\newcommand\restr[2]
\title[]
{On some algebraic and analytic properties of the finitely generated simple left orderable groups $G_\rho$.}
\author{Pawel Aleksander Fedorynski} 
\address{Department of Mathematics, University of Hawaii at Manoa}\email{pfedor@gmail.com}
\author{Yash Lodha}
\address{Department of Mathematics, University of Hawaii at Manoa}\email{lodha@hawaii.edu}
\begin{document}
\newtheorem{theorem}{Theorem}[section]
\newtheorem{proposition}{Proposition}[section]
\theoremstyle{definition} \newtheorem{definition}{Definition}[section]

\dedicatory{} 

\begin{abstract}
In $2019$ Hyde and the second author constructed the first family of finitely generated, simple, left-orderable groups. We prove that these groups are not finitely presentable, non-inner amenable,
don't have Kazhdan's property (T) (yet have property FA), and that their first $\ell^2$-Betti number vanishes.
We also show that these groups are uniformly simple, providing examples of uniformly simple finitely generated left-orderable groups. Finally, we also describe the structure of the groups $G_{\rho}$ where $\rho$ is a periodic labelling.
\end{abstract}

\maketitle

\section{Introduction}

Whether a countable group admits a faithful action by orientation preserving homeomorphisms on the real line admits a surprisingly elementary algebraic characterization. Such an action exists if and only if the group is left-orderable: it admits a total order which is invariant under left multiplication by group elements.
The theory of orderable groups has a rich history that goes back to the work of Dedekind and H\"older \cite{GOD}, and has witnessed considerable recent advances \cite{NavasICM}. 

It is natural to inquire whether there exist finitely generated simple left-orderable groups. This was posed by Rhemtulla in $1980$ \cite{HydeLodha}, and was open for nearly four decades until it was answered in the affirmative by Hyde and the second author in \cite{HydeLodha}. Since then, several families have emerged \cite{HydeLodhaRivas}, \cite{MatteBonTriestino}, and \cite{HydeLodhaFP}. 

The construction in \cite{HydeLodha} takes as input a combinatorial map $\rho:\mathbf{Z}[\frac{1}{2}]\to \{a,a^{-1},b,b^{-1}\}$, called a quasi-periodic labeling, that satisfies a set of axioms. Using this input, the construction provides a finitely generated simple group $G_{\rho}\leq \textup{Homeo}^+(\mathbf{R})$ (see Section \ref{Prel} for more details). The purpose of this note is to establish some key structural properties of these groups. 

A key starting point of our work is the setting of the Grigorchuk space of marked groups. In this space, finitely presented simple groups are isolated points, and in particular, they cannot emerge as nontrivial limits. By showing that each $G_{\rho}$ group emerges as a nontrivial limit in this setting, we prove our first result:

\begin{theorem} \label{notfp}
For each quasi-periodic labelling $\rho$, the group $G_\rho$ is not finitely presentable.
\end{theorem}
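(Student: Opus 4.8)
The strategy is the one indicated just before the statement: realize $(G_\rho,S)$, for the natural finite generating tuple $S$ of $G_\rho$ described in Section~\ref{Prel}, as a limit in the Grigorchuk space of marked groups of a sequence $(G_{\rho_n},S_n)$ that is eventually different from $(G_\rho,S)$, and then invoke that finitely presented simple groups are isolated there. Let me first recall why the isolation fact holds, since everything is organized around it. If $(\Gamma_n,S_n)\to(\Gamma,S)$ and $\Gamma=\langle S\mid r_1,\dots,r_m\rangle$ is finitely presented, then each $r_i$ is a fixed word of bounded length which is trivial in $\Gamma$, so by definition of the topology it is trivial in $\Gamma_n$ for all large $n$; hence for large $n$ there is a surjection $\Gamma\twoheadrightarrow\Gamma_n$ carrying $S$ to $S_n$. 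If moreover $\Gamma$ is simple and $\Gamma_n\neq 1$, this surjection is an isomorphism respecting markings, so $(\Gamma_n,S_n)=(\Gamma,S)$, and $(\Gamma,S)$ is isolated. Thus it suffices to produce nontrivial marked groups $(G_{\rho_n},S_n)$ with $(G_{\rho_n},S_n)\to(G_\rho,S)$ but $(G_{\rho_n},S_n)\neq(G_\rho,S)$ for infinitely many $n$.

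The sequence is obtained by perturbing $\rho$ far from the origin. The feature of the construction of Section~\ref{Prel} to isolate is a \emph{locality} property: the standard generators read the labelling only at bounded depth, so that the homeomorphism of $\mathbf R$ represented by a word $w$ in $S$ of length at most $\ell$ is unchanged when $\rho$ is modified outside a finite set $E_\ell\subset\mathbf Z[\tfrac12]$ depending only on $\ell$; one may take $E_1\subseteq E_2\subseteq\cdots$ with $\bigcup_\ell E_\ell=\mathbf Z[\tfrac12]$. Granting this, I would use the axioms defining quasi-periodic labellings to build, for each $n$, a labelling $\rho_n$ that agrees with $\rho$ on $E_n$, differs from $\rho$, and is still an admissible input, so that $G_{\rho_n}$ is a finitely generated group with the analogous generating tuple $S_n$ of the same size; and I would arrange that $G_{\rho_n}\not\cong G_\rho$ (or that $G_{\rho_n}$ is not simple) for infinitely many $n$. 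Convergence is then immediate: for $|w|\le\ell$ and $n\ge\ell$ the word $w$ reads $\rho$ and $\rho_n$ only on $E_\ell\subseteq E_n$, where they coincide, so $w$ represents the same homeomorphism under both actions; since these actions are faithful, the sets of relations of length $\le\ell$ in $(G_{\rho_n},S_n)$ and $(G_\rho,S)$ coincide, i.e.\ their $\ell$-balls agree, and hence $(G_{\rho_n},S_n)\to(G_\rho,S)$. As these marked groups are eventually distinct from $(G_\rho,S)$, Theorem~\ref{notfp} follows.

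The main obstacle is the middle step: producing labellings $\rho_n$ that simultaneously (i) remain admissible after a modification away from $E_n$ — this requires checking the defining axioms (repairing quasi-periodicity after altering $\rho$ on a far window) so that $G_{\rho_n}$ is genuinely a finitely generated group with generating tuple $S_n$ of the fixed cardinality $k$, hence a point of the same space of $k$-generated marked groups; (ii) agree with $\rho$ on the exhausting sets $E_n$; and (iii) give marked groups different from $(G_\rho,S)$ for infinitely many $n$. For (iii) the most economical route is either to plug in periodic (or eventually periodic) labellings and appeal to the structural description of $G_\rho$ for periodic $\rho$ — which exhibits these groups as non-simple — or to invoke the abundance of pairwise non-isomorphic groups in the family together with a pigeonhole argument showing that the freedom to alter $\rho$ on $E_n^{\,c}$ already realizes infinitely many isomorphism types. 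A secondary technical point is the locality property itself, i.e.\ pinning down the depth at which length-$\ell$ words probe $\rho$ and verifying that it is finite and grows controllably with $\ell$; this is where the precise form of the generators and of the action on $\mathbf R$ from Section~\ref{Prel} enters.
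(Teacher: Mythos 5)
Your overall architecture is the paper's: exhibit $(G_\rho,S_\rho)$ as a nontrivial limit in $\cG_6$ and use that finitely presented simple (more generally, finitely discriminable) groups are isolated there; your direct derivation of the isolation fact is correct, and your suggestion to use periodic labellings and their structural degeneracy (non-simplicity / a translation quasimorphism) to get $(G_{\rho_n},S_n)\neq(G_\rho,S_\rho)$ is exactly how the paper handles that point (Lemma \ref{nonisom}, alternatively Lemma \ref{lifting}). However, there is a genuine gap in your convergence argument. You assert a locality property of the form: \emph{the homeomorphism of $\mathbf{R}$ represented by a word $w$ with $|w|\le\ell$ is unchanged when $\rho$ is modified outside a finite set $E_\ell$}. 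This is false. The generators $\zeta_i,\chi_i$ are globally supported: the restriction of $\zeta_i$ to $[n,n+1]$ is dictated by $\rho(n+\tfrac12)$ for \emph{every} $n\in\mathbf{Z}$, so altering $\rho$ at a position far from the origin alters $w(S_\rho)$ there. Since a relation $w(S_\rho)=e$ is the global condition that $w(S_\rho)$ fixes every point of $\mathbf{R}$, agreement of $\rho$ and $\rho_n$ on an exhausting sequence of finite sets does not force the $\ell$-balls of relations to agree, and your sentence ``the sets of relations of length $\le\ell$ coincide'' does not follow.

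The correct locality statement (Lemma \ref{glocal} in the paper) is pointwise: the displacement $x\cdot w(S_\sigma)-x$ depends only on $\sigma$ restricted to a window of radius $|w|$ around $x$, up to integer translation. To match relations globally one must therefore match \emph{all} windows: one needs every subword of $\rho_n$ of length $\approx 4n$ to occur in $\rho$ and vice versa. This is the content of Lemma \ref{subwords}, and it is precisely where quasi-periodicity is used in an essential way --- one periodizes a window of $\rho$ that contains all short subwords, chosen (via the recurrence axiom) so that the subwords straddling consecutive periods are again subwords of $\rho$. Your proposal identifies the right difficulty (``the main obstacle is the middle step'') but the mechanism you propose for resolving it --- perturbation away from the origin --- cannot work; the repair is to replace ``agreement on $E_n$'' by ``equality of the sets of subwords of length $4n$'' throughout.
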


A discrete group $G$ has property $\textup{(T)}$ if every affine isometric action of $G$ on a real Hilbert space admits a fixed point.
Such groups are also called Kazhdan groups.
A group $G$ is said to have Serre's property $\textup{(FA)}$ if every action of $G$ on a simplicial tree has a global fixed point \cite{serretrees}.
It is known that Kazhdan's property $\textup{(T)}$ implies~$\textup{(FA)}$ \cite{watatani}, but the converse is not true. We prove:

\begin{theorem} \label{notkazhdan}
$G_\rho$ does not have Kazhdan's property $\textup{(T)}$, yet it has Serre's property $\textup{(FA)}$.
\end{theorem}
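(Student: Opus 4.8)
The plan is to establish the two assertions separately, in both cases leaning on the concrete realization of $G_\rho$ inside $\textup{Homeo}^+(\mathbf{R})$ and on the self-similar family of germs from which it is built.

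\emph{Property $\textup{(FA)}$.} Since $G_\rho$ is finitely generated and, being infinite and simple, admits no infinite cyclic quotient, Serre's criterion tells us that $G_\rho$ has property $\textup{(FA)}$ as soon as it does not split as a nontrivial amalgam; equivalently, it suffices to prove that every action of $G_\rho$ on a simplicial tree $\mathcal{T}$ has a global fixed point. Fix a finite generating set $S$. Because a product of two elliptic isometries of a tree with disjoint fixed-point sets is hyperbolic, it is enough to show that every $s\in S$ and every product $st$ (with $s,t\in S$) acts \emph{elliptically} on $\mathcal{T}$: then the subtrees $\Fix(s)$, $s\in S$, are pairwise intersecting, and by the Helly property of trees they have a common point, necessarily fixed by all of $G_\rho$. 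To see that such an element $h\in G_\rho$ is elliptic, I would use that the translation length function $\ell$ for the action on $\mathcal{T}$ is homogeneous, conjugation invariant, and non-negative with values in a discrete subset of $\mathbf{R}$; it is then enough to produce, for every $n\ge 1$, an element $h_n\in G_\rho$ with $h_n^{\,n}$ conjugate in $G_\rho$ to $h$, since this forces $\ell(h)=n\,\ell(h_n)$ to be divisible by every $n$ and hence to vanish. Such ``divisibility up to conjugacy'' should follow from self-similarity: after conjugating in $G_\rho$ one presents $h$ as a homeomorphism supported in a small interval, breaks its support into arbitrarily many disjoint shrunken copies reproducing the same local germ data, and permutes these copies by a further element of $G_\rho$. \textbf{Extracting this statement from the axioms on $\rho$ --- locating the requisite rescaling and shuffling elements in $G_\rho$ and verifying that the cloned copies really are $G_\rho$-conjugate to $h$ --- is the main obstacle for this half.}

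\emph{Failure of $\textup{(T)}$.} Here the aim is to construct an affine isometric action of $G_\rho$ on a real Hilbert space possessing an unbounded orbit; such an action has no fixed point, so in the formulation recalled above $G_\rho$ cannot have property $\textup{(T)}$, and this is consistent with property $\textup{(FA)}$ precisely because the action in question is not an action on a simplicial tree. The representation I would use is modeled on Farley's treatment of Thompson's groups: an element of $G_\rho$ is described by a finite $\rho$-labelled ``tree pair'', so $G_\rho$ acts on the poset of $\rho$-labelled dyadic subdivisions, and from this poset one builds a $G_\rho$-invariant space with walls --- indeed a $\mathrm{CAT}(0)$ cube complex --- hence a wall metric $d_w$ and the associated affine isometric action on $\ell^2$ of the set of walls, whose translation vectors $b(g)$ satisfy $\|b(g)-b(h)\|^2$ comparable to $d_w(g\cdot o,h\cdot o)$ for a base vertex $o$. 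Unboundedness of the orbit of $o$ then reduces to the combinatorial fact that writing a power $g^n$ in this model requires on the order of $n$ ``expansions'', so that $d_w(o,g^n\cdot o)\to\infty$. \textbf{Making the wall (or cube-complex) structure work in the presence of the labelling $\rho$ is the main obstacle here}: one must check that the $\rho$-labelled subdivision poset is directed enough for the construction to be valid and for the linear lower bound on orbit growth to hold. One should also note that the resulting action need not be metrically proper --- the germ data is infinitely divisible, so vertex stabilizers can be infinite --- which is why this argument yields only the failure of $\textup{(T)}$, and not, for instance, the Haagerup property.
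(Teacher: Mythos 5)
Both halves of your argument hinge on a step you yourself flag as the ``main obstacle,'' and in both cases that step is exactly where the proof breaks down. For property (FA): your mechanism is to kill translation lengths by showing that every $h\in G_\rho$ is, up to conjugacy, an $n$-th power for every $n$. This is false. The one-sided derivative of $h$ at a fixed point is a conjugacy invariant (the chain rule makes the derivatives of the conjugator cancel), and it is always an integer power of $2$. If $h$ fixes $x$ with right derivative $2$ there, and $h$ were conjugate to $k^{2}$, then $k$ would fix the corresponding point (for orientation-preserving homeomorphisms of $\bR$ one has $\Fix(k)=\Fix(k^{2})$) with the square of its right derivative equal to $2$, which is impossible for an integer power of $2$. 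So no square root exists even up to conjugacy, and the divisibility argument cannot start. The paper's route is different and avoids this: it replaces the generating set by one in which any two generators fix a common compact interval pointwise, places each such pair (Proposition \ref{Fprime}) inside a subgroup isomorphic to a finite direct sum of copies of $F'$, proves (Lemma \ref{TreeLem1}) that every element of such a direct sum is elliptic in any tree action --- the key point being that a copy of $F'$ commuting with a putative hyperbolic element must, being simple, fix its axis pointwise --- and then applies Serre's criterion to the generators and their pairwise products.

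For the failure of (T): you propose a Farley-type space with walls built from ``$\rho$-labelled tree pairs.'' No such diagram-group or tree-pair model for $G_\rho$ is available; these groups are defined by local germ conditions along the labelling, there is no known directed poset of ``$\rho$-labelled subdivisions,'' and so the entire construction is conjectural rather than a proof. The paper instead exhibits $(G_\rho,S_\rho)$ as a limit in the space of marked groups of the groups $(G_{\rho_n},S_{\rho_n})$ for periodic labellings $\rho_n$ (Lemma \ref{limitofperiodic}); each $G_{\rho_n}$ surjects onto an infinite subgroup of $\PL(\bS^1)$ via the extension $1\to\bZ\to G_{\rho_n}\to T(m)\to 1$ of Lemma \ref{lifting}, hence fails (T) by Theorem \ref{PLcircleNotKazhdan}, and Shalom's openness of the Kazhdan property in the space of marked groups (Theorem \ref{kazhdanopen}) transfers the failure to the limit $G_\rho$. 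As written, neither half of your proposal constitutes a proof.
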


It is a result of Shalom that Kazhdan groups in the space of marked groups form an open set (see \cite{Stalder}). As mentioned above, in this paper we show that the groups $G_{\rho}$ emerge as nontrivial limits. Indeed, the sequence of groups which we demonstrate limit to $G_{\rho}$ are known to not have Kazhdan's property $(T)$, which establishes the first half of the above.

A group $G$ is said to be inner amenable if the action of $G$ by conjugation on $G$ admits an \emph{atomless invariant mean}: a finitely additive probability measure $\mu$ on all subsets of $G$ satisfying $\mu(g^{-1} A g)=\mu(A)$ and which doesn't concentrate on a single element.
In their $1943$ article, Murray and von Neumann introduced property Gamma for von Neumann algebras, which requires the existence of nontrivial asymptotially central sequences. In a $1975$ article, Effros proved that if a group von Neumann algebra factor $LG$ has property Gamma, then $G$ is inner amenable. It was a longstanding question whether there is a counterexample to the converse which has infinite conjugacy classes for each nontrivial element (i.e. the ICC property). A counterexample was found by Vaes in \cite{Vaes}. It is natural in this context to inquire whether the groups $G_{\rho}$ (naturally having the ICC property) are inner amenable and whether $LG_{\rho}$ has property Gamma. We prove the following:

\begin{theorem} \label{notinner}
$G_\rho$ is not inner amenable.
\end{theorem}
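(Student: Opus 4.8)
The plan is to exploit the dynamical structure of the action of $G_\rho$ on the real line, following the strategy that has become standard for proving non-inner-amenability of groups of homeomorphisms (as in the work on Thompson-like groups). Suppose for contradiction that $\mu$ is an atomless conjugation-invariant mean on $G_\rho$. First I would fix a point $x \in \mathbf{R}$ in the domain where the action has rich local behavior, and consider for each element $g$ its germ/support data near $x$; the key is to produce, for a cleverly chosen pair of elements or a cleverly chosen subgroup, a ``ping-pong'' type decomposition of $G_\rho$ into pieces that get pushed around by conjugation in a way incompatible with the existence of $\mu$. Concretely, I would look for an element $t \in G_\rho$ and a nontrivial element $h \in G_\rho$ supported in a small interval $I$ such that the intervals $t^n I t^{-n}$ are pairwise disjoint; then the sets $A_n = \{ g \in G_\rho : \Supp(g) \subseteq t^n I t^{-n}\}$ (or the cosets $g_n C$ of the rigid stabilizer $C$ of $I$, translated by $t^n$) are pairwise disjoint and conjugate to one another, forcing $\mu(A_0) = 0$; by a covering/commutation argument one then shows $\mu$ must be supported on elements commuting with a large enough subgroup, and simplicity plus the ICC property rules that out.

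More precisely, the cleanest route uses the structure of $G_\rho$ as a group acting on $\mathbf{R}$ with a rich family of rigid stabilizers. Let $C = G_\rho(I)$ denote the subgroup of elements supported in some bounded interval $I$ whose closure is contained in the region of support. Because $G_\rho$ acts ``flexibly'' (it contains copies of Thompson-like groups supported on such intervals, and has no global fixed points in the relevant region), one can find infinitely many elements $t_1, t_2, \dots$ of $G_\rho$ such that the conjugates $t_n I t_n^{-1}$ are pairwise disjoint, hence the subgroups $t_n C t_n^{-1}$ pairwise commute and are pairwise conjugate. Averaging: since $\mu$ is conjugation invariant, $\mu(t_n C t_n^{-1}) = \mu(C)$ for all $n$; I then use the disjointness to deduce that these values are summable-constrained, so $\mu(C) = 0$. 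Running this over a countable family of intervals $I$ whose rigid stabilizers generate $G_\rho$, one gets $\mu(\bigcup_k C_k) = 0$. The complement of this union inside $G_\rho$ is (roughly) the set of elements that fail to be supported in any of the chosen intervals — one arranges the $C_k$ so that every element of $G_\rho$ lies in a product of boundedly many of them, or rather one shows the complement is ``centralizer-like'' and uses atomlessness.

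The step I expect to be the main obstacle is passing from ``$\mu$ vanishes on each rigid stabilizer'' to ``$\mu$ is trivial'' — i.e., controlling the elements of $G_\rho$ that are \emph{not} captured by small-support stabilizers. The resolution I have in mind: use a theorem of the Chifan--Das--Peterson / Kida type, or more elementarily the argument that an atomless conjugation-invariant mean must assign full measure to the FC-center or, failing that, give rise to an invariant mean on the conjugacy action restricted to some infinite conjugacy class, contradicting the combination of (i) ICC, and (ii) a suitable ``commutator-pushing'' dynamical lemma showing that in $G_\rho$ every nontrivial conjugacy class can be ``displaced off itself'' by elements supported on disjoint intervals. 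Equivalently, I would invoke the double-commutant/ping-pong criterion: a group acting on a set with a ``flexible'' system of supports (commuting conjugates, no global fixed points, and the property that any two elements can be simultaneously conjugated into a common rigid stabilizer after enough moves) is never inner amenable. Verifying that $G_\rho$ satisfies the hypotheses of such a criterion — which reduces to concrete statements about the quasi-periodic labelling $\rho$ and the structure of $G_\rho$ established in Section~\ref{Prel} — is the technical heart, and is where I would concentrate the work; everything else is a routine averaging argument over the mean $\mu$.
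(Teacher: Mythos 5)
Your approach has a genuine gap, and in fact its very first step fails for $G_\rho$. You propose to work with rigid stabilizers $C = G_\rho(I)$ of bounded intervals $I$ and to displace them off themselves by conjugation. But in $G_\rho$ every nontrivial element has \emph{unbounded} support: by Lemma~\ref{transition}(1), the support of any non-identity element has infinitely many connected components (this is forced by the quasi-periodic labelling, which replicates any local piece of an element across infinitely many integer translates; see Definition~\ref{Krho} and Proposition~\ref{SpecialElements2}). Consequently the rigid stabilizer of any bounded interval is trivial, there is no ``nontrivial element $h$ supported in a small interval $I$,'' and the entire displacement/averaging scheme has nothing to act on. The subgroups of $G_\rho$ that play the role of ``local'' copies of $F'$ (e.g.\ $\lambda(F')$, $\pi(F')$, or the groups $A$ of Proposition~\ref{Fprimefff}) consist of elements whose supports already spread over infinitely many intervals, so they cannot be pushed off themselves by a single translation-like element in the way you describe. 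Separately, even granting a family of subgroups on which the mean vanishes, your proposed bridge to a contradiction is not a proof: the claim that an atomless conjugation-invariant mean must concentrate on the FC-center (so that ICC finishes the job) is false in general --- Thompson's group $F$ is ICC and inner amenable --- and the ``flexible system of supports'' criterion you invoke is exactly the statement you would need to prove, not a known black box applicable here.

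The paper's argument is entirely different and avoids these issues. It applies the Haagerup--Olesen criterion (Proposition~\ref{criterion}): it suffices to exhibit a non-amenable subgroup $K \leq G_\rho$ such that the centralizer in $K$ of every nontrivial $h \in G_\rho$ is amenable. The paper takes $K = \langle \lambda(f), \pi(f)\rangle$, a free group of rank $2$, and shows that any $g$ commuting with $h$ must fix every transition point of $h$ (Lemma~\ref{notcommute}, using that supports of elements of $G_\rho$ have bounded components but infinitely many of them --- the same structural fact that defeats your rigid-stabilizer step). Since point stabilizers in $G_\rho$ contain no nonabelian free subgroups (Corollary~\ref{nonfree}, via Proposition~\ref{Fprimefff}), the centralizer of $h$ in $K$ is trivial or infinite cyclic, hence amenable. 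If you want to salvage a mean-theoretic argument, you would essentially have to re-derive a criterion of this centralizer type; the direct route through the free subgroup is the efficient one here.
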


For any countable group $G$ there is a sequence of numerical invariants
$\b^{(2)}_0(G), \b^{(2)}_1(G), \ldots \in [0, \i]$ called the $\ell^2$-Betti numbers of $G$.
These are important invariants in geometric group theory. The definition is rather involved
and we will not reproduce it here. We refer the reader to \cite{luck2002l2} for a comprehensive treatment
and to \cite{kammeyer2019introduction} for an introduction with modest prerequisites.

Our interest in the first $\ell^2$-Betti number arises in view of the Osin-Thom conjecture \cite{osin2013normal}.
Recall that a group $G$ is {\em normally generated} by a subset $X \subseteq G$ if $G$ coincides with the normal closure of $X$,
i.e., the only normal subgroup of $G$ containing $X$ is $G$ itself. The {\em normal rank} of $G$,
denoted $\mathrm{nrk}(G)$, is the minimal number of normal generators of $G$. Osin and Thom conjectured that
the inequality $\b^{(2)}_1(G) \leq \mathrm{nrk}(G) - 1$ holds for any torsion free countable group $G$.
If true, that would have important consequences; notably, it would
provide a counterexample to the question (posed by Wiegold) whether every finitely generated perfect group is normally generated by a single element.
Osin and Thom proved the conjecture for groups which are limits (in the space of marked groups) of left-orderable amenable groups.
The theorem below implies that the conjecture holds for $G_\rho$. Note that $G_{\rho}$ cannot emerge as a limit of left orderable amenable groups. Since they are finitely generated and simple, they are non-indicable by a Theorem of Witte-Morris \cite{WM}, whereas limits of finitely generated indicable groups are indicable.

\begin{theorem} \label{l2betti}
The first $\ell^2$-Betti number of $G_\rho$ is equal to 0.
\end{theorem}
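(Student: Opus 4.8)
The plan is to deduce the vanishing of $\b^{(2)}_1(G_\rho)$ from the general theory of $\ell^2$-Betti numbers of limits of marked groups, combined with the structural fact (established earlier in the paper in the proof of Theorem \ref{notfp}) that $G_\rho$ arises as a nontrivial limit in the Grigorchuk space of marked groups. Recall that $\b^{(2)}_1$ is upper semicontinuous in the space of marked groups on a fixed number of generators: if $G_n \to G$ with all $G_n$ marked by $k$ generators, then $\b^{(2)}_1(G) \le \liminf_n \b^{(2)}_1(G_n)$. This is a result of Osin and Thom (building on L\"uck approximation / the work on cost and $\ell^2$-invariants of sofic-type approximations), and it is the same semicontinuity input that underlies their theorem on the Osin--Thom conjecture for limits of left-orderable amenable groups, cited in the excerpt. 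So the first step is to identify the approximating sequence $G_n$ used in the proof that $G_\rho$ is not finitely presentable.

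Second, I would show that each group $G_n$ in this approximating sequence has $\b^{(2)}_1(G_n) = 0$. The natural candidates for $G_n$ are the groups $G_\rho$ truncated or built from finite pieces of the labelling $\rho$ — i.e., groups acting on the line that are "locally $G_\rho$" but contain large abelian or more generally infinite amenable normal subgroups or infinite amenable commensurated subgroups, or that split as suitable amalgams/extensions. The cleanest route: these approximating groups should contain an infinite amenable normal subgroup, or be themselves built as an ascending union / extension with amenable kernel, in which case $\b^{(2)}_1 = 0$ by the standard vanishing criteria (Cheeger--Gromov: if $N \trianglelefteq G$ is infinite amenable and normal then $\b^{(2)}_p(G)=0$ for all $p$; more generally if $G$ contains an infinite normal subgroup $N$ with $\b^{(2)}_1(N)=0$ and $[G:N]=\infty$... ). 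Alternatively, if the $G_n$ are themselves amenable — which may well be the case if the approximating sequence consists of groups of PL or piecewise-affine homeomorphisms of an interval with small support behavior — then $\b^{(2)}_1(G_n)=0$ is immediate since amenable infinite groups have all $\ell^2$-Betti numbers zero, while a finite group $G_n$ would have $\b^{(2)}_1(G_n)=0$ trivially as well; but one must double check the groups in question are infinite so that the relevant normalization holds. In any case, the key point is that whatever explicit sequence witnesses the non-finite-presentability is a sequence with vanishing first $\ell^2$-Betti number, and I would verify this structurally group-by-group using the sub/normal-subgroup vanishing lemmas.

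Putting these together: since $G_\rho = \lim_n G_n$ in the space of $k$-marked groups (for the appropriate fixed $k$, which is $2$ or the number of generators of $G_\rho$) and $\b^{(2)}_1(G_n) = 0$ for all $n$, upper semicontinuity gives $0 \le \b^{(2)}_1(G_\rho) \le \liminf_n \b^{(2)}_1(G_n) = 0$, hence $\b^{(2)}_1(G_\rho) = 0$. One subtlety to handle carefully is the normalization of the marking: upper semicontinuity of $\b^{(2)}_1$ requires a uniform bound on the number of marked generators, so I must make sure the approximating sequence is taken in the same generator-count stratum of the Grigorchuk space as $G_\rho$, which should already be arranged in the proof of Theorem \ref{notfp}. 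A second point: $\b^{(2)}_0(G) = 1/|G|$ is $0$ for infinite groups, so this poses no obstruction and is not what we are computing; the content is entirely in degree $1$.

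The main obstacle I anticipate is not the semicontinuity step — that is a black-box citation — but rather pinning down that the specific approximating groups $G_n$ (as constructed for the non-finite-presentability argument) genuinely have $\b^{(2)}_1 = 0$. If those groups are amenable this is trivial; if instead they are, say, finitely generated groups containing $G_\rho$-like dynamics on proper sub-intervals together with an infinite amenable (e.g. abelian, coming from a "germ at infinity" or a central/translation direction) normal subgroup, then I need to correctly invoke the Cheeger--Gromov vanishing theorem and verify the normality and infiniteness hypotheses in that concrete setting. So the crux is a careful description of the $G_n$ and the identification inside each of an infinite amenable normal subgroup (or a proof of amenability of $G_n$ outright), after which the conclusion is a one-line application of upper semicontinuity.
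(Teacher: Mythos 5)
Your approach founders on the semicontinuity step, and this is not a repairable technicality: the inequality you want to cite as a black box, namely $\b^{(2)}_1(\lim_n G_n)\le \liminf_n \b^{(2)}_1(G_n)$ for a convergent sequence in the space of marked groups, is false in general. The free group $F_2$ is a limit of finite marked groups in $\cG_2$ (it is residually finite, hence LEF), and also a limit of two-generator one-relator groups; in both cases the approximating groups have vanishing first $\ell^2$-Betti number while $\b^{(2)}_1(F_2)=1$. The genuine semicontinuity theorem in this setting (Pichot) goes in the opposite direction --- roughly $\b^{(2)}_1(G)-\b^{(2)}_0(G)\ge\limsup_n\bigl(\b^{(2)}_1(G_n)-\b^{(2)}_0(G_n)\bigr)$ --- which here yields only the vacuous $\b^{(2)}_1(G_\rho)\ge 0$. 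The Osin--Thom result you allude to is not a general semicontinuity statement either: it applies to limits of \emph{left-orderable amenable} marked groups, and the paper explicitly observes that $G_\rho$ cannot arise as such a limit (being finitely generated and simple, it is non-indicable). Moreover, the actual approximating groups from the non-finite-presentability argument are the groups $G_{\rho_n}$ attached to periodic labellings, and these are non-amenable: each sits in an extension $1\to\bZ\to G_{\rho_n}\to T(n)\to 1$ where $T(n)$ is a Thompson-like circle group containing non-abelian free subgroups. (Your secondary observation is fine as far as it goes: the infinite central copy of $\bZ$ does force $\b^{(2)}_1(G_{\rho_n})=0$ by Cheeger--Gromov. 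But with no valid semicontinuity to pass this to the limit, it proves nothing about $G_\rho$.)

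The paper's actual proof is entirely different and does not use the marked-group limit at all. It invokes a criterion of Brothier: if $G$ is generated by a (possibly infinite) list of infinite-order elements in which any two consecutive elements commute, then $\b^{(2)}_1(G)=0$. Such a ``good list'' is produced explicitly by enumerating $\cK'\cup\cL'$ and interpolating between consecutive entries with elements supported near the integers (respectively near $\bZ+\tfrac12$), using disjointness of supports to guarantee the required commutations. If you want to salvage your strategy you would need a genuinely new semicontinuity input; as written, the crux of your argument rests on a false statement.
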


A group $G$ is called $N$-uniformly simple if for every non-trivial $f,g \in G$
the element $f$ is the product of at most $N$ elements from $C_{g^{\pm 1}}$, where $C_g$ is the conjugacy class of the element $g$. A group is uniformly simple if it is
$N$-uniformly simple for some natural number $N$ (see \cite{prox} for discussion and further references).
We prove:

\begin{theorem} \label{UniformlySimpleMainTheorem}
$G_\rho$ is uniformly simple.
\end{theorem}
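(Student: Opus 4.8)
The plan is to exploit the concrete description of $G_\rho$ as a group of homeomorphisms of $\mathbf{R}$ built from piecewise-projective type pieces, and to reduce uniform simplicity to a statement about moving small intervals. Since $G_\rho$ is generated by elements supported on bounded intervals and acts on $\mathbf{R}$ in a highly flexible (``flow-like'') manner, the strategy mirrors the standard commutator-trick arguments for Thompson-like groups: given nontrivial $f, g$, one first finds an interval $I$ and a conjugate $g_1 = h g^{\pm 1} h^{-1}$ whose support is contained in $I$, then shows that $f$ can be written as a bounded product of elements each supported in a single translate of $I$, each of which in turn is a bounded product of conjugates of $g_1$. The key geometric input one needs from Section \ref{Prel} is a transitivity/flexibility property: for any two bounded open intervals $J, K$ with appropriate endpoint conditions there is an element of $G_\rho$ carrying $J$ into $K$, and $G_\rho$ acts ``locally densely'' in the sense required to fragment any element into boundedly many pieces with small support.

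First I would establish a \emph{displacement lemma}: there is a universal constant $N_0$ such that for every nontrivial $g \in G_\rho$ there is a nonempty open interval $U$ and an element written as a product of at most $N_0$ conjugates of $g^{\pm 1}$ that acts on $U$ as a fixed ``model'' homeomorphism — for instance, pushing $U$ entirely off a slightly larger interval, or realizing a prescribed element of the local germ group. This uses that $g$ moves some point, hence some interval $U$ satisfies $g(U) \cap U = \es$; conjugating $g$ and $g^{-1}$ and composing a bounded number of them (the classical $[g, hgh^{-1}]$-type manipulation, cf. \cite{prox}) produces an element supported in a controlled neighborhood of $U$ with large ``throw.'' Second, I would prove a \emph{fragmentation lemma with bounded length}: every $f \in G_\rho$ is a product of at most $M$ elements $f_1, \dots, f_M$, where each $f_i$ is supported in an interval that can be mapped by $G_\rho$ into $U$. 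For a group of this flow type the support of any single element is a bounded interval (or a finite union thereof, controlled by the generating set), so $M$ can be taken uniform; this is where the specific structure of $G_\rho$ — as opposed to an arbitrary subgroup of $\mathrm{Homeo}^+(\mathbf{R})$ — is essential, and I expect to invoke results from \cite{HydeLodha} on the dynamics of $G_\rho$. Third, conjugating each $f_i$ into $U$ by an element of $G_\rho$, I reduce to: every element supported in $U$ is a bounded product of conjugates (within $G_\rho$) of the model homeomorphism from the displacement lemma — another commutator-trick computation, bounded because the model element has enough ``room'' inside $U$ to absorb an arbitrary element supported there via a single ping-pong pair.

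Combining these, $f$ is a product of at most $M$ pieces, each a product of a bounded number of conjugates of the model element, each of which is a product of at most $N_0$ conjugates of $g^{\pm 1}$; multiplying the constants gives a universal $N$ with $f \in (C_{g^{\pm 1}})^N$, which is exactly $N$-uniform simplicity.

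The main obstacle I anticipate is the bounded fragmentation step: controlling the \emph{number} of pieces uniformly over all $f$, independent of how complicated $f$ is. For Thompson-type groups this works because any homeomorphism in the group, however intricate, has support in a bounded interval and can be split into a fixed number of ``basic moves''; for $G_\rho$ one must verify the analogous statement from its definition — in particular that conjugating an arbitrary bounded-support element into the fixed interval $U$ is possible \emph{within} $G_\rho$ (not merely in $\mathrm{Homeo}^+(\mathbf{R})$), which relies on the transitivity properties of $G_\rho$ on configurations of intervals established in \cite{HydeLodha}. A secondary subtlety is matching \emph{germs} at the endpoints of $U$: when we push $f_i$ into $U$ and then express it via the model element, the local behavior near $\partial U$ must be handled so that no residual germ obstruction survives; since $G_\rho$ is simple this obstruction must ultimately vanish, but making the vanishing quantitative (bounded length) is the delicate point and is where I would spend the most care.
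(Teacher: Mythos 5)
There is a genuine gap, and it sits exactly where you flagged your main worry: the bounded fragmentation step. Your argument rests on the premise that the support of any element of $G_\rho$ is a bounded interval (or a controlled finite union of such), so that $f$ can be cut into a uniformly bounded number of pieces, each conjugated into a fixed interval $U$. This premise is false for $G_\rho$: by Lemma \ref{transition}, the support of every nontrivial element has \emph{infinitely many} connected components spread over all of $\mathbf{R}$, and moreover the restriction of $f$ to one such component is not itself an element of $G_\rho$, since membership in $G_\rho$ is governed by the global combinatorial conditions (3.a), (3.b) of Definition \ref{Krho}, which force the local behaviour to repeat wherever the labelling pattern repeats. So neither ``support in a bounded interval'' nor ``fragment into boundedly many compactly supported pieces within $G_\rho$'' is available, and the third step (absorbing a piece supported in $U$ by conjugates of the model element) has no elements of $G_\rho$ supported in $U$ to apply to. This is not a repairable technicality within your outline; it is the central difficulty of the theorem.

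What the paper does instead is to replace fragmentation-in-space by fragmentation-into-finitely-many-\emph{patterns}: a suitable conjugate of each factor of $\alpha$ is uniformly stable, its infinitely many atoms fall into finitely many equivalence classes of decorated atoms, and the resulting cellular decomposition embeds the element as a \emph{full} element of a subgroup $A \cong \bigoplus_{i} F'$ (Proposition \ref{mainpropUS}). The uniform bound then comes from two inputs your sketch does not have: the $6$-uniform simplicity of $F'$ (Theorem \ref{USF}) and, crucially, Lemma \ref{directsum}, which shows via a signature/counting argument that the uniformity constant for full elements of $\bigoplus_i F'$ can be taken independent of the number of summands ($k = 3^6 \times 12$). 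Your displacement lemma does survive in spirit: Step 2 of the paper's proof conjugates $\beta$ so as to displace a fixed set $V$ and uses the commutator identity $[[f_1,\beta_1^{-1}],f_2]=f_3$ to manufacture, from four conjugates of $\beta^{\pm 1}$, an element $f_3$ that is full in the relevant direct sums; but it is the direct-sum packaging, not interval-by-interval absorption, that converts this into the final bound $8k$.
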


\section{Preliminaries} \label{Prel}

Suppose $E$ is a 1-dimensional manifold and $G$ acts on $E$ by orientation preserving homeomorphisms; for example, $E$ could equal $\bR$ or $[0, 1]$, and $G < \mathrm{Homeo}^+(E)$.
Then we denote
\[
\Supp(g) = \{ x \in E : x \cdot g \neq x \}
\]

The set of all dyadic rationals will be denoted $\bZ[\tfrac{1}{2}]$.

Thompson's groups F and T are thoroughly discussed in \cite{CFP}. We recall the definitions and some facts
we'll need.
\begin{definition}
{\em Thompson's group $F$} is the group of piecewise linear homeomorphisms $f \colon [0, 1] \to [0, 1]$ which satisfy the following conditions
\begin{enumerate}
\item
$f$ is differentiable except at finitely many dyadic rationals.
\item
Wherever $f'$ exists, the value of the derivative is an integer power of 2.
\end{enumerate}
\end{definition}
Identify the circle $\mathbf{S}^1$ as $[0, 1] / \{ 0, 1 \}$, that is, the closed unit interval with endpoints glued together.
If $f \colon \mathbf{S}^1 \to \mathbf{S}^1$ is a homeomorphism, and $x \in \mathbf{S}^1$ is not equal to the endpoint (0 glued together with 1), we can talk about the existence and value
of the derivative $f'(x)$ defined in the obvious way. 
\begin{definition}
{\em Thompson's group T} is the group of piecewise linear homeomorphisms $f \colon \mathbf{S}^1 \to \mathbf{S}^1$,
such that
\begin{enumerate}
\item
$f$ is differentiable except at finitely many points.
\item
Wherever the derivative exists, its value is an integer power of 2.
\item $X\cdot f=X$ where $X=\mathbf{Z}[\frac{1}{2}]/\mathbf{Z}$.
\end{enumerate}
\end{definition}
Immediately from the above, we notice that $F$ can be identified with the subgroup of $T$ which fixes the endpoint.
The group $F$ admits the finite presentation $\langle f,g\mid [fg^{-1},g^f], [fg^{-1},g^{f^2}]\rangle$.
If $r, s \in \bZ[\tfrac{1}{2}] \cap [0, 1]$ and $r < s$, we denote by $F_{[r, s]}$ the subgroup of $F$
consisting of those elements whose support is included in $[r, s]$. For any such $r$ and $s$, the group
$F_{[r, s]}$ is isomorphic to $F$.
For any group $G$, we denote by $G' = [G, G]$ the commutator subgroup of $G$. One can show that $F'$ is simple, and consists of exactly
those elements $g \in F$ for which $\cl{\Supp(g)} \subseteq (0, 1)$ \cite{CFP}. 

Next, we describe the construction of $G_\rho$. We will closely follow the discussion in \cite{HydeLodha} and \cite{uniformlyperfect}

For any $n \in \bZ$, let $\iota_n \colon [n, n+1) \to (n, n+1]$ be the unique orientation-reversing isometry.
Then, define the map $\iota \colon \bR \to \bR$ via
\[
x \cdot \iota = x \cdot \iota_n, \quad \text{where } x \in [n, n+1), \text{ for any } n \in \bZ
\]
\begin{definition} \label{groupH}
We fix an element $c_0 \in F$ with the following properties:
\begin{enumerate}
\item
The support of $c_0$ equals $\left(0, \tfrac{1}{4} \right)$ and $x \cdot c_0 > x$ for every $x \in \left( 0, \tfrac{1}{4} \right)$.
\item
The restriction $\restr{c_0}{\left(0, \tfrac{1}{16} \right)}$ equals to the map $t \mapsto 2t$.
\end{enumerate}
Let
\[
c_1 = \iota_0 \circ c_0 \circ \iota_0, \quad \nu_1 = c_0 c_1
\]
Note that $\nu_1$ is a \emph{symmetric element} of $F$, which means that it commutes with $\iota_0$. We define a subgroup $H$ of $F$ as
\[
H = \< F', \nu_1 \>
\]
Finally, we fix
\[
\nu_2, \nu_3 \colon [0, 1] \to [0, 1]
\]
as chosen homeomorphisms whose supports are included in $\left( \tfrac{1}{16}, \tfrac{15}{16} \right)$
and that generate the group $F_{\left[ \frac{1}{16}, \frac{15}{16} \right]}$.
\end{definition}
\begin{lemma}[Lemma 2.4 in \cite{HydeLodha}] \label{lemmaH}
The group $H$ generated by $\nu_1, \nu_2, \nu_3$ satisfies:
\begin{enumerate}
\item $H'$ is simple.
\item $H'$ consists of precisely the set of elements of $H$ (or $F$) that are compactly supported
in $(0, 1)$. In particular, $H' = F'$.
\end{enumerate}
\end{lemma}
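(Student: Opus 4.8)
The plan is to prove the two parts of Lemma \ref{lemmaH} separately, and in fact to deduce both from standard structural facts about Thompson's group $F$. Throughout, write $K$ for the set of elements of $F$ that are compactly supported in the open interval $(0,1)$; recall from the excerpt that $K = F'$ and that $F'$ is simple. The key observation driving everything is that $\nu_1 = c_0 c_1$ is supported in $(0, \tfrac14)$ (since $c_0$ is supported in $(0,\tfrac14)$ and $c_1 = \iota_0 c_0 \iota_0$ is supported in the reflected interval $(\tfrac14, \tfrac12)$ — wait, one must be careful, $\iota_0$ reflects $[0,1)$ to $(0,1]$, so $c_1$ is supported in $(\tfrac34, 1)$). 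In any case $\nu_1$ has closure of support inside $(0,1)$, hence $\nu_1 \in F' = K$. Combined with $F' \subseteq H$ by definition of $H$, we get $H = \langle F', \nu_1\rangle = F'$, and part (2)'s claim $H' = F'$ will follow once we know $H = F'$ and that $F'$ is perfect. Actually the cleanest route: since $\nu_1 \in F'$ we have $H = F'$ immediately, so both parts reduce to the already-known facts that $F'$ is simple and $F'$ equals the group of elements compactly supported in $(0,1)$.

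First I would verify carefully that $\cl{\Supp(\nu_1)} \subseteq (0,1)$. By hypothesis $\Supp(c_0) = (0, \tfrac14)$, so $\cl{\Supp(c_0)} = [0,\tfrac14] \not\subseteq (0,1)$ — the left endpoint is a problem. However, the relevant fact about $F'$ is that it consists of elements $g$ with $\cl{\Supp(g)} \subseteq (0,1)$, i.e. with a neighborhood of both $0$ and $1$ fixed; an element supported on $(0,\tfrac14)$ fixes a neighborhood of $1$ but not necessarily of $0$. So I must check whether $c_0 \in F'$ directly. In fact $c_0$ near $0$ acts as $t \mapsto 2t$, which has slope $2 \neq 1$ at $0^+$, so $c_0 \notin F'$. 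This means the naive argument fails and $\nu_1$ need not lie in $F'$; I need the genuine content of Hyde–Lodha's Lemma 2.4. So the real plan is: cite or reconstruct the proof that (a) $\nu_1$ together with $F'$ generates the group of all elements of $F$ fixing neighborhoods of $\tfrac14$ and all points to its right — no. Let me instead follow the structure of the argument in \cite{HydeLodha} directly.

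\textbf{The actual approach.} The intended proof, which I would reproduce, runs as follows. Part (2): the inclusion $F' \subseteq H$ is by definition, and $F'$ is exactly the set of compactly-supported-in-$(0,1)$ elements of $F$; conversely one shows $H$ is contained in this set and then that it equals it. The element $\nu_1 = c_0 c_1$: since $c_0$ is supported in $(0,\tfrac14)$ with slope $2$ near $0$, and $c_1 = \iota_0 c_0 \iota_0$ is the mirror image supported in $(\tfrac34, 1)$ with slope $\tfrac12$ near $1$ (the reflection conjugation reverses the slope behavior), the product $\nu_1$ is a single element of $F$ whose germ at $0$ is multiplication by $2$ and whose germ at $1$ is multiplication by $\tfrac12$; crucially these germs are \emph{inverse} to each other, which is the symmetry/commuting-with-$\iota_0$ property. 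Now any element $g \in F$ has germs at $0$ and $1$ given by multiplication by $2^{p}$ and $2^{q}$ for integers $p, q$; the subgroup of $F$ generated by $F'$ and $\nu_1$ is precisely $\{g : \text{germ of } g \text{ at } 0 \text{ is } 2^p \text{ and at } 1 \text{ is } 2^{-p}\}$, which is a proper index-$\infty$-ish... no, it has index isomorphic to $\bZ$ in $F$ (via the germ at $0$). This subgroup $H$ is NOT equal to $F'$. So I must re-read: the excerpt's Lemma says $H' = F' $ and $H' = $ compactly supported elements. That is consistent: $H$ is a group with abelianization $\bZ$ (germ at $0$), and its commutator subgroup $H'$ recaptures exactly the compactly-supported elements $= F'$.

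So the proof I would write: (1) Show $H$ maps onto $\bZ$ via the germ-at-$0$ homomorphism $g \mapsto p$ where germ is $t \mapsto 2^p t$, with kernel containing $F'$; since $\nu_1 \mapsto 1$, this map is surjective, and its kernel is exactly the elements of $H$ with trivial germs at $0$ and (by the symmetry) at $1$, i.e. compactly supported in $(0,1)$, which is $F'$. Hence $H/F' \cong \bZ$ is abelian, so $H' \subseteq F'$; and $H' \supseteq F'$ because $F' = [F', F']$ is perfect (it's simple and nonabelian). Therefore $H' = F'$, giving part (2) including $H'$ = compactly supported elements and $H' = F'$. (2) Part (1), $H'$ simple, is then immediate since $H' = F'$ and $F'$ is simple as recalled above. \textbf{The main obstacle} is pinning down the germ computation for $\nu_1$: verifying that $c_1 = \iota_0 c_0 \iota_0$ is supported in $(\tfrac34, 1)$ with germ $t \mapsto \tfrac12(t-1)+1$ near $1$, so that the germs of $\nu_1$ at the two endpoints are mutually inverse powers of $2$, and hence that the single homomorphism "germ at $0$" captures the whole abelianization of $H$ — equivalently, that an element of $H$ with trivial germ at $0$ automatically has trivial germ at $1$. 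This symmetry is exactly the point of introducing the reflection $\iota_0$ and the symmetric element $\nu_1$, and it is where all the care goes.
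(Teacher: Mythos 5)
Your argument is essentially correct, but note first that the paper does not prove this statement at all: it is quoted verbatim as Lemma 2.4 of \cite{HydeLodha}, so there is no in-paper proof to compare against. Your final route --- $H/F'$ is abelian, hence $H'\subseteq F'$; and $F'$ is perfect (simple and nonabelian) with $F'\subseteq H$, hence $F'=(F')'\subseteq H'$ --- is the standard one and does establish both enumerated claims, given the quoted facts that $F'$ is simple and equals $\{g\in F:\cl{\Supp(g)}\subseteq(0,1)\}$. Two remarks. First, the inclusion $H'\subseteq F'$ needs no germ computation whatsoever: $H\leq F$ already gives $[H,H]\subseteq[F,F]$, so the entire discussion of the abelianization map and of which cyclic subgroup of $\bZ^2$ the group $H$ hits is dispensable. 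Second, your germ computation is actually wrong in its details: with $c_0$ acting as $t\mapsto 2t$ near $0$, the conjugate $c_1=\iota_0 c_0\iota_0$ acts as $x\mapsto 2x-1$ near $1$, so $\nu_1$ has slope $2$ at \emph{both} endpoints (image $(1,1)$ in $F/F'\cong\bZ^2$), not mutually inverse slopes $2^p$ and $2^{-p}$ as you assert; the symmetry $\iota_0\nu_1\iota_0=\nu_1$ forces equal, not inverse, germs. This error does not damage the argument (any cyclic image in $\bZ^2$ suffices for your kernel claim), but your description of $H$ as $\{g:\text{germ }2^p\text{ at }0,\ 2^{-p}\text{ at }1\}$ is incorrect. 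Finally, you do not address the implicit claim in the lemma's header that $H=\langle F',\nu_1\rangle$ is in fact generated by the three elements $\nu_1,\nu_2,\nu_3$; that is the one part of Hyde--Lodha's Lemma 2.4 that genuinely requires work (one must show that $F'$ is generated by the $\langle\nu_1\rangle$-conjugates of $F_{[1/16,15/16]}'$ together with $\nu_2,\nu_3$), and it is absent from your write-up.
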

\begin{definition}
Denote $\frac{1}{2} \bZ = \left\{ \tfrac{1}{2} k : k \in \bZ \right\}$. A {\em labelling} is a map
\[
\rho \colon \tfrac{1}{2}\bZ \to \left\{ a, b, a^{-1}, b^{-1} \right\},
\]
which satisfies
\begin{enumerate}
\item
$\rho(k) \in \left\{ a, a^{-1} \right\}$ for each $k \in \bZ$.
\item
$\rho(k) \in \left\{ b, b^{-1} \right\}$ for each $k \in \tfrac{1}{2} \bZ \setminus \bZ$.
\end{enumerate}
\end{definition}
We regard $\rho$ as a bi-infinite word with respect to the usual ordering of the real numbers. A subset $X \subseteq \tfrac{1}{2}\bZ$ is said
to be a {\em block} if it is of the form
\[
\left\{ k, k + \tfrac{1}{2}, \ldots, k + \tfrac{1}{2}n  \right\}
\]
for some $k \in \tfrac{1}{2} \bZ$, $n \in \bN$. The set of all blocks is denoted by $\bB$. To each block
\[
X = \left\{ k, k + \tfrac{1}{2}, \ldots, k + \tfrac{1}{2}n  \right\}
\]
we assign a formal word
\[
W_\rho(X) = \rho(k) \rho(k + \tfrac{1}{2}) \cdots \rho(k + \tfrac{1}{2}n)
\]
which is a word in the letters $a$, $b$, $a^{-1}$, $b^{-1}$. Such a formal word is called a {\em subword} of the labelling.

Given a word $w_1 \cdots w_n$, the formal inverse
of that word is $w_n^{-1} \cdots w_1^{-1}$, with the natural convention that $\left(a^{-1}\right)^{-1} = a$ and $\left(b^{-1}\right)^{-1} = b$.
The formal inverse of $W_\rho(X)$ is denoted by $W_\rho^{-1}(X)$.

A labelling $\rho$ is said to be {\em quasi-periodic} if the following conditions hold:
\begin{enumerate}
\item
For each block $X \in \bB$, there is an $n \in \bN$ such that whenever $Y \in \bB$ is a block of size at least $n$, then $W_\rho(X)$
is a subword of $W_\rho(Y)$.
\item
For each block $X \in \bB$, there is a block $Y \in \bB$ such that $W_\rho(Y) = W_\rho^{-1}(X)$.
\end{enumerate}
Note that by {\em subword} in the above we mean a string of consecutive letters in the word.

\begin{definition}
Let $H < \mathrm{Homeo}^+([0, 1])$ be the group defined in Definition \ref{groupH}. Recall from Lemma \ref{lemmaH}
that the group $H$ is generated by the three elements $\nu_1, \nu_2, \nu_3$ defined in Definition \ref{groupH}.
In what appears below, by $\cong_T$ we mean that the restrictions are topologically conjugate via the unique
orientation-preserving isometry that maps $[0, 1]$ to the respective interval. We define the homeomorphisms
\[
\zeta_1, \zeta_2, \zeta_3, \chi_1, \chi_2, \chi_3 \colon \bR \to \bR
\]
as follows: for each $i \in \{1, 2, 3\}$ and $n \in \bZ$,
\begin{align*}
\restr{\zeta_i}{[n, n + 1]} \cong_T \nu_i \quad & \text{ if } \rho(n + \tfrac{1}{2}) = b, \\
\restr{\zeta_i}{[n, n + 1]} \cong_T (\iota \circ \nu_i \circ \iota) \quad & \text{ if } \rho(n + \tfrac{1}{2}) = b^{-1}, \\
\restr{\chi_i}{\left[n - \tfrac{1}{2}, n + \tfrac{1}{2}\right]} \cong_T \nu_i \quad & \text{ if } \rho(n) = a, \\
\restr{\chi_i}{\left[n - \tfrac{1}{2}, n + \tfrac{1}{2}\right]} \cong_T (\iota \circ \nu_i \circ \iota) \quad & \text{ if } \rho(n) = a^{-1}
\end{align*}
Note that $\zeta_i$ and $\chi_i$ depend on $\rho$, even though we don't make that explicit in the notation.

The group $G_\rho$ is defined as:
\[
G_\rho = \< \zeta_1, \zeta_2, \zeta_3, \chi_1, \chi_2, \chi_3 \> < \mathrm{Homeo}^+(\bR)
\]
We will denote the tuple $\left( \zeta_1, \zeta_2, \zeta_3, \chi_1, \chi_2, \chi_3 \right)$
by $S_\rho$ and call it the {\em standard generator tuple} for $G_\rho$.
We also define the subgroups
\[
\cK = \< \zeta_1, \zeta_2, \zeta_3 \>, \quad \cL = \< \chi_1, \chi_2, \chi_3 \>
\]
of $G_\rho$, which are both isomorphic to $H$. The isomorphisms will be denoted by $\lambda \colon H \to \cK$ and
$\pi \colon H \to \cL$, and are defined by requiring that for each $f \in H$ and $n \in \bZ$,
\begin{align*}
\restr{\lambda(f)}{[n, n+1]} \cong_T f \quad & \text{ if } \rho(n + \tfrac{1}{2}) = b, \\
\restr{\lambda(f)}{[n, n+1]} \cong_T (\iota \circ f \circ \iota) \quad & \text{ if } \rho(n + \tfrac{1}{2}) = b^{-1}, \\
\restr{\pi(f)}{\left[ n - \tfrac{1}{2}, n + \tfrac{1}{2} \right]} \cong_T f \quad & \text{ if } \rho(n) = a, \\
\restr{\pi(f)}{\left[ n - \tfrac{1}{2}, n + \tfrac{1}{2} \right]} \cong_T (\iota \circ f \circ \iota) \quad & \text{ if } \rho(n) = a^{-1}
\end{align*}
\end{definition}
It follows from the above that
\[
\cK' \cong \cL' \cong H' \cong F'
\]
Note that $\cK$ and $\cL$ also depend on the choice of $\rho$, even though we don't make it explicit in the notation.

The following theorem is proved in \cite{HydeLodha}
\begin{theorem}
Let $\rho$ be a quasi-periodic labelling. Then the group $G_\rho$ is simple.
\end{theorem}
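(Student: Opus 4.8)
The plan is to show that every nontrivial normal subgroup $N\trianglelefteq G_\rho$ equals $G_\rho$; together with the fact that $G_\rho\ne 1$ (it contains $\cK'\cong F'$), this gives simplicity. I would split the argument into three steps: (i) any such $N$ meets $\cK'$ (or $\cL'$) nontrivially; (ii) consequently $N$ contains all of $\cK'$ (or $\cL'$); (iii) the normal closure of $\cK'$ in $G_\rho$ is $G_\rho$.

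Step (ii) is immediate: $N\cap\cK'$ is a normal subgroup of $\cK'$, and $\cK'\cong H'\cong F'$ is simple, so a nontrivial intersection forces $\cK'\subseteq N$; likewise for $\cL'$.

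For step (i) I would use a commutator argument. Take $1\ne g\in N$. A short preliminary manipulation --- replacing $g$ by a conjugate or a product with one of the generators, using that $\cK$ and $\cL$ act on their blocks as flexible copies of $H$ --- reduces to the case where $g$ moves a point $x_0$ in the interior of some $\cK$-block $[n,n+1]$ to another point of the same block, so that one may choose an interval $I\ni x_0$ with $\cl I\subseteq(n,n+1)$ and $I\cdot g\cap I=\es$. Now pick $h\in\cK'$ of the form $\lambda(f)$ with $f\in F'$ nontrivial and supported so narrowly, and $I$ shrunk so far, that $\Supp(h)$ meets $(n,n+1)$ only inside $I$ while $\Supp(h)$ is disjoint from $I\cdot g^{-1}$; then $[h,g]=h^{-1}(g^{-1}hg)\in N$, and a direct computation shows $[h,g]$ agrees with $h^{-1}$ on a nonempty subinterval of $I$, so $[h,g]$ is a nontrivial element of $N$ acting nontrivially over the block $[n,n+1]$. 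The real work --- and the step I expect to be the main obstacle --- is to promote this to a nontrivial element of $\cK'$ itself. Unlike in the classical Higman--Epstein setting, $G_\rho$ has \emph{no} nontrivial element of bounded support (both $\cK$ and $\cL$ act ``diagonally'' across infinitely many blocks, so a word in the generators that is trivial on all sufficiently distant blocks is trivial), so one cannot simply localize $[h,g]$. The remedy is a fragmentation argument using the overlapping brick pattern of the $\cK$-blocks and $\cL$-blocks, together with the doubling behaviour of the generators at the integers and half-integers: conjugating $[h,g]$ by elements of $\cL$ whose $\cL$-blocks straddle an endpoint of $[n,n+1]$, recombining with suitable elements of $\cK'$, and invoking the simplicity of the copy of $F'$ living over $[n,n+1]$ inside $G_\rho$, one forces a nontrivial element into $N\cap\cK'$. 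Carrying out this ``collapse to the diagonal copy'' is the technical core.

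For step (iii) I would show $M:=\langle\langle\cK'\rangle\rangle=G_\rho$. Since $H/H'$ is cyclic (generated by the class of $\nu_1$) and $\cK'\subseteq M$, the image of $\cK$ in $G_\rho/M$ is cyclic, generated by the image of $\zeta_1$; and once $\cL'\subseteq M$ is known, the image of $\cL$ in $G_\rho/M$ is likewise cyclic, generated by the image of $\chi_1$. As $G_\rho=\langle\cK',\cL',\zeta_1,\chi_1\rangle$, it remains to place $\cL'$, $\zeta_1$ and $\chi_1$ in $M$, i.e.\ to write each as a product of $G_\rho$-conjugates of elements of $\cK'$. Conjugating a $\cK'$-element by a suitable element of $\cL$ pushes its support across a $\cK$-block boundary; iterating and recombining such conjugates recovers the half-integer-supported copy $\cL'$, and then the generators $\zeta_1$ and $\chi_1$. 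This is where quasi-periodicity of $\rho$ enters essentially: transporting a local configuration sitting over one integer (resp.\ half-integer) to the one over an arbitrary integer (resp.\ half-integer) requires the labels near the two locations to agree, which is exactly the recurrence axiom, while the mirror-word axiom provides the orientation-reversed patterns. Combining (i)--(iii) gives $N=G_\rho$; applied with $N=G_\rho'$ (nontrivial since $\cK'\subseteq G_\rho'$) this also records that $G_\rho$ is perfect, completing the proof of simplicity.
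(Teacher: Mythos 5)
This theorem is not proved in the present paper at all --- it is quoted from the original Hyde--Lodha construction --- so there is no in-paper proof to compare against; I will therefore assess your sketch on its own terms and against the machinery the paper does develop (notably Step 2 of the uniform simplicity argument, which contains the relevant trick).

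Your skeleton (a nontrivial normal subgroup $N$ meets $\cK'$, hence contains $\cK'\cong F'$ by simplicity, and symmetrically contains $\cL'$, whence $N=G_\rho$ since $G_\rho=\langle \cK'\cup\cL'\rangle$) is the right one, and steps (ii) and (iii) are essentially fine --- indeed (iii) is easier than you make it, since the paper records that $\cK'$ and $\cL'$ already generate $G_\rho$. The genuine gap is in step (i). You correctly produce a nontrivial commutator $[h,g]\in N$, and you correctly diagnose that the absence of nontrivial elements of bounded support blocks the classical localization; but at exactly that point you declare a ``fragmentation/collapse to the diagonal copy'' to be the technical core and do not carry it out. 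Moreover the route you gesture at --- working over a single block $[n,n+1]$, conjugating by $\cL$-elements straddling its endpoints, and ``invoking the simplicity of the copy of $F'$ living over $[n,n+1]$'' --- is pointed in the wrong direction: there is no subgroup of $G_\rho$ acting as $F'$ over a single block, and any element of $N$ you manufacture this way still has nontrivial behaviour over infinitely many blocks that you have no means of cancelling. The standard resolution (visible in Step 2 of the uniform simplicity proof in this paper) is to avoid single-block localization entirely: using minimality and the special elements of Proposition \ref{SpecialElements2}, conjugate $g$ to an element $g_1$ displacing a \emph{diagonal} union $V=\bigcup_n (n+\tfrac12-\e,\,n+\tfrac12+\e)$ entirely off itself, $V\cdot g_1\cap V=\es$. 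Then for $f_1,f_2\in\cK$ with $\Supp(f_1),\Supp(f_2)\subset V$ one has $[[f_1,g_1^{-1}],f_2]=[f_1,f_2]\in\cK'$, and this double commutator lies in $N$; choosing $f_1,f_2$ with $[f_1,f_2]\neq e$ gives $N\cap\cK'\neq 1$ in one stroke. Without this (or an actual execution of your fragmentation claim), the proof is incomplete at its central step.
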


The space of marked groups was first introduced by Grigorchuk in \cite{grigorchuk1985degrees}. The definition we recall below
is similar to the one given in \cite{goldbring2022ultrafilters}.
   

\begin{definition}
We will call a pair $(G, S)$ a {\em marked group} if $G$ is a group
and $S = (s_1, \ldots, s_n)$ is a finite ordered tuple such that $\{ s_1, \ldots, s_n \}$ generates $G$.
Let $\tilde{\cG}_n$ denote the set of marked groups $(G, S)$ such that $S$ is an $n$-tuple.
If $(G, S)$ and $(G', S')$ belong to $\tilde{\cG_n}$, then $(G, S)$ and $(G', S')$ are {\em marked isomorphic} if the map $s_i \mapsto s'_i$
for $i = 1, \ldots, n$ extends to an isomorphism between $G$ and $G'$.
We then define the marked isomorphism relation generated by $(G, S)\sim (G', S')$,
and define the quotient space $\cG_n=\tilde{\cG}_n/\sim$.
\end{definition}

\begin{definition}
For any $(G, S), (G', S') \in \tilde{\cG_n}$, let $\nu((G, S), (G', S'))$ denote the maximal $k \in \bN\cup \{\infty\}$ such that,
for any word $w(x_1, \ldots, x_n)$ of length at most $k$, $w(s_1, \ldots, s_n) = e$ in $G$ if and only if
$w(s'_1, \ldots, s'_n) = e$ in $G'$. We then define $d((G, S), (G', S')) = 2^{-\nu((G, S),(G', S'))}$.
One can easily check
that the function 
\[
d \colon \cG_n \x \cG_n \to [0, \i) \colon ([(G, S)], [(G', S')]) \mapsto d((G, S), (G', S'))
\]
is well-defined
and is a metric on $\cG_n$. We call $\cG_n$ equipped with such metric the {\em space of $n$-generated marked groups}.
\end{definition}

For any $x \in \bR$, $n \in \bN$, and a labelling $\s$, let $\cW_\s(x, n)$ denote the word
of length $2n+1$ in the alphabet $\{ a, a^{-1}, b, b^{-1} \}$ defined as follows. Let $y \in \frac{1}{2}\bZ \setminus \bZ$
be such that $x \in \left[y - \frac{1}{2}, y + \frac{1}{2}\right)$. Then
\[
\cW_\s(x, n) = \s (y - \tfrac{1}{2}n) \, \s (y - \tfrac{1}{2}(n - 1)) \cdots \s(y) \cdots \s(y + \tfrac{1}{2}(n - 1)) \, \s(y + \tfrac{1}{2}n)
\]
Next, for any $n \in \bZ$, let $\iota_n$ be the unique orientation-reversing isometry $\iota_n \colon {[n, n + 1)} \to {(n, n + 1]}$. Then define
$\iota \colon \bR \to \bR$ as
\[
x \cdot \iota = x \cdot \iota_n, \quad \text{ where $x \in [n, n + 1)$ for $n \in \bZ$}
\]
Similarly, if $I$ is a compact interval with endpoints in $\mathbf{Z}[\frac{1}{2}]$ and $n \in \bN$, we define:
\[
\cW_\s(I, n) = \s (\inf(I) - \tfrac{1}{2}n) \, \s (\inf(I) - \tfrac{1}{2}(n - 1)) \cdots \s(\sup(I) + \tfrac{1}{2}(n - 1)) \, \s(\sup(I) + \tfrac{1}{2}n)
\]

We recall the following \emph{characterization of elements} of $G_{\rho}$.
This provides an alternative, ``global" description of the groups as comprising of elements satisfying dynamical and combinatorial hypotheses,
and is reminiscent of similar descriptions for various generalisations of Thompson's groups.

\begin{definition}\label{Krho}
Let $K_{\rho}$ be the set
of homeomorphisms $f\in \textup{Homeo}^+(\mathbf{R})$ satisfying the following:
\begin{enumerate}
\item $f$ is a countably singular piecewise linear homeomorphism of $\mathbf{R}$ with a discrete set of singularities, all of which lie in $\mathbf{Z}[\frac{1}{2}]$.
\item $f'(x)$, wherever it exists, is an integer power of $2$.
\item There is a $k_f\in \mathbf{N}$ such that the following holds.
\begin{enumerate}
\item[3.a] Whenever $x,y\in \mathbf{R}$ satisfy that $$x-y\in \mathbf{Z} \text{ and }  \mathcal{W}(x,k_f)=\mathcal{W}(y,k_f),$$ it holds that $$x-x\cdot f = y-y\cdot f.$$
\item[3.b] Whenever $x,y\in \mathbf{R}$ satisfy that $$x-y\in \mathbf{Z}\text{ and } \mathcal{W}(x,k_f)=\mathcal{W}^{-1}(y,k_f),$$ it holds that $$x-x\cdot f=y'\cdot f-y' \qquad \text{ where }y'=y\cdot \iota_{[n,n+1]}.$$
where $n\in \mathbf{N}$ is such that $y\in [n,n+1)$.
\end{enumerate}
\end{enumerate}
\end{definition}

The following is Theorem $1.8$ in \cite{uniformlyperfect}.
\begin{theorem}\label{characterisation}
Let $\rho$ be a quasi-periodic labelling.
The groups $K_{\rho}$ and $G_{\rho}$ coincide.
\end{theorem}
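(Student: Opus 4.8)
The plan is to establish the two inclusions $G_\rho\subseteq K_\rho$ and $K_\rho\subseteq G_\rho$ separately; the first is routine book-keeping, and the second is where the substance lies.

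For $G_\rho\subseteq K_\rho$ I would first verify that each of the six standard generators lies in $K_\rho$, with constant $k_f=1$. Conditions (1) and (2) of Definition~\ref{Krho} follow because $\nu_1,\nu_2,\nu_3\in H\subseteq F$ are piecewise linear with dyadic breakpoints and integer‑power‑of‑$2$ slopes, because the orientation‑preserving isometries $[0,1]\to[n,n+1]$ and $[0,1]\to[n-\tfrac12,n+\tfrac12]$ and the reflection $\iota$ preserve these features, and because the breakpoints of the $\nu_i$ are bounded away from $0$ and $1$, so the singular sets are discrete. For condition (3) one unwinds the definitions: the restriction of $\zeta_i$ to $[n,n+1]$ is determined by $\rho(n+\tfrac12)$, and that of $\chi_i$ to $[n-\tfrac12,n+\tfrac12]$ by $\rho(n)$, both of which can be read off from $\mathcal{W}(x,1)$; when two windows agree one restriction is an isometric translate of the other, so the displacements agree, matching (3.a), and when one window is the reverse‑inverse of the other one restriction is the $\iota$‑conjugate of the other, which is exactly the identity recorded in (3.b). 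Next I would check that $K_\rho$ is closed under composition and inversion. The key structural fact is that every $f\in K_\rho$ has \emph{uniformly bounded displacement}: by (3.a) the quantity $x-x\cdot f$ depends only on the fractional part of $x$ and on $\mathcal{W}(x,k_f)$, which ranges over a finite set of words, and on the unit intervals carrying a fixed window it is a single piecewise linear function of the fractional part, hence bounded; put $D_f:=\sup_x|x-x\cdot f|<\infty$. Granting this, writing $x-x\cdot(fg)=(x-x\cdot f)+\bigl(x\cdot f-(x\cdot f)\cdot g\bigr)$ one sees the second term is a function of the fractional part of $x\cdot f$ and of $\mathcal{W}(x\cdot f,k_g)$, and since $x\cdot f$ lies within $D_f$ of $x$ these are in turn functions of the fractional part of $x$ and of $\mathcal{W}(x,K)$ for $K$ bounded in terms of $k_f,k_g,D_f$; so $fg$ satisfies (3), and inversion is handled similarly, the point being that $x\cdot f^{-1}$ and $\tilde x\cdot f^{-1}$ differ by the same integer as $x$ and $\tilde x$ as soon as their sufficiently large windows coincide.

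For $K_\rho\subseteq G_\rho$, given $f\in K_\rho$ I would reduce $f$ to the identity through multiplication by finitely many elements of $G_\rho=\langle\cK,\cL\rangle$. Step one: multiply $f$ by an element of $G_\rho$ so that the product fixes $\tfrac12\bZ$ pointwise; this requires a transitivity lemma to the effect that $G_\rho$ contains an element inducing the same (window‑equivariant) bijection $\tfrac12\bZ\to\bZ[\tfrac12]$ as $f$. Step two: once $f$ fixes $\tfrac12\bZ$ it preserves each interval $[\tfrac j2,\tfrac{j+1}2]$ and acts there as an isometric copy of some $h_j\in F$, and by condition (3) the element $h_j$ depends only on the parity of $j$ and on the window of the unit interval containing $[\tfrac j2,\tfrac{j+1}2]$, of which there are finitely many. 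Step three: peel these off one at a time — for each window type $w$ construct $g_w\in G_\rho$ realizing the prescribed $F$‑element on the relevant half‑intervals of the unit intervals of type $w$ and the identity on all other types, and replace $f$ by $fg_w^{-1}$. After finitely many steps $f$ is the identity, so the original $f$ lay in $G_\rho$. The elements $g_w$ would be assembled from $\cK'\cong\cL'\cong F'$ (using the self‑similarity of $F'$ to drop copies of $F'$ onto arbitrarily small dyadic subintervals; cf.\ Lemma~\ref{lemmaH}) together with commutators and conjugates mixing $\cK$‑type and $\cL$‑type elements, whose fundamental domains are offset by $\tfrac12$; here the quasi‑periodicity axioms enter essentially — axiom (1) guarantees every window occurs syndetically, so the building blocks can be assembled coherently, and axiom (2) guarantees the $\iota$‑reverse of every window occurs, which is what makes the construction compatible with (3.b).

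The main obstacle is the construction of the $g_w$ (and, in the same spirit, the transitivity lemma of step one). Individually $\cK$ and $\cL$ are ``too uniform'': an element of $\cK$ performs one and the same member of $H$ on every integer‑translated fundamental domain up to the $\rho$‑dictated flip, and likewise for $\cL$, so isolating a single radius‑$k$ window type while prescribing arbitrary behavior there and the identity elsewhere forces one to interleave $\cK$ and $\cL$ through repeated conjugation and commutators — all under the rigid constraint of bounded displacement, which forbids any translation of support. Making this combinatorial book‑keeping close up, so that the syndetic pattern of each window and of its $\iota$‑reverse matches the available building blocks, is the real work of the proof.
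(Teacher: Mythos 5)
First, a point of orientation: the paper does not prove Theorem~\ref{characterisation} at all --- it is quoted as Theorem~1.8 of \cite{uniformlyperfect}. The closest in-house analogue is the proof of Theorem~\ref{gbig} for periodic labellings, which follows the same two-inclusion skeleton you propose: generators and their products are shown to satisfy Definition~\ref{Krho} via Lemma~\ref{glocal} (your bounded-displacement/window argument is a correct, if slightly more laborious, version of this), and the reverse inclusion is obtained by normalizing a given $f\in K_\rho$ and then decomposing it by window type. So your overall architecture is the right one, and your first inclusion is essentially complete.

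The genuine gap is in the second inclusion, and you have in effect flagged it yourself: the construction of the elements $g_w\in G_\rho$ that realize a prescribed $F'$-element exactly on the intervals of a single window type (and, in the same spirit, the ``window-equivariant'' transitivity element of your step one) is not carried out, and this is precisely where all the content of the theorem lives. This is the role played by Proposition~\ref{prop34general} (Proposition~3.4 of \cite{uniformlyperfect}) and Proposition~\ref{SpecialElements2}: one must show that for a uniformly stable $f$, each piece $f_\zeta$ of its cellular decomposition already lies in $G_\rho$, which requires an explicit interleaving of $\cK$- and $\cL$-type elements exploiting both quasi-periodicity axioms. A sketch that says ``isolate each window type by repeated conjugation and commutators; making the book-keeping close up is the real work'' is a statement of the problem, not a proof of it. Two smaller remarks: (i) your step one asks for far more than is needed --- the argument of Theorem~\ref{gbig} only arranges that $fg(\pi(h))^{-1}$ fix a \emph{neighborhood of $0$} pointwise, after which quasi-periodicity (syndetic recurrence of the window of $0$, cf.\ Lemma~\ref{uniformlystable}) forces fixed neighborhoods of a syndetic set of integers, i.e.\ stability; demanding that the product fix all of $\tfrac12\bZ$ pointwise requires constructing exactly the kind of window-selective element you have not yet built, so the reduction is circular as stated; (ii) your claim that the generators satisfy condition (3) with $k_f=1$ is fine, but for products you should quote something like Lemma~\ref{glocal} ($k_{w(S_\rho)}$ grows with word length) rather than rederive it through the displacement bound, since the displacement bound itself already uses finiteness of the set of windows.
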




The following is Proposition 4.16 in \cite{ffflodha}, which shall also be an ingredient in our proofs.

\begin{proposition}\label{Fprimefff}
Let $\rho$ be a quasi-periodic labelling.
Let $f_1, \ldots, f_n\in G_{\rho}$ be elements such that there exists an open interval $I$ which is pointwise fixed by the element $f_i$ for each $1\leq i\leq n$.
Then there is a subgroup $A<G_{\rho}$ isomorphic to a finite direct sum of copies of $F'$, which contains $f_1, \ldots, f_n$.
\end{proposition}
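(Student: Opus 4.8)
The plan is to exploit the characterization $G_\rho=K_\rho$ from Theorem~\ref{characterisation} together with the quasi-periodicity of $\rho$ in order to confine the supports of $f_1,\dots,f_n$ to a syndetic family of uniformly bounded dyadic intervals on whose interiors every $f_i$ is compactly supported, and then, for each of the finitely many combinatorial types of these intervals, to recognize a copy of $F'$ inside $G_\rho$ collecting them.

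First I would replace $I$ by a subinterval $I_0\subseteq I$ with endpoints in $\bZ[\tfrac12]$ and set $k=\max_i k_{f_i}$ (any larger constant also works in Definition~\ref{Krho}(3)). Condition (3.a) says that the displacement $x\mapsto x-x\cdot f_i$ is a function of the fractional part of $x$ and of the word $\cW_\rho(x,k)$, which is constant on each unit interval $[m,m+1)$; hence this displacement has only finitely many ``shapes,'' is bounded, and — since it vanishes on $I_0$ — vanishes identically on every translate $I_0+m$ ($m\in\bZ$) across which the unit-interval word is unchanged and, by (3.b), on every reflected copy of $I_0$ whose word is the formal inverse. By the first quasi-periodicity axiom these straight and reflected copies of $I_0$ recur syndetically, so we may fix a uniform bound $L$ and a bi-infinite family of pairwise disjoint compact dyadic intervals $\{J_j\}$, each of length $\le L$, with endpoints in the interiors of copies of $I_0$, such that $\Supp(f_i)\subseteq\bigcup_j J_j$ and $\cl{\Supp(f_i)}\cap J_j\subseteq\interior(J_j)$ for all $i,j$. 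Thus each restriction $\restr{f_i}{J_j}$ is, via the canonical isometry, an element of $F'$.

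Next I would sort the $J_j$ into finitely many classes $\tau_1,\dots,\tau_r$ according to the word $\rho$ displays on a window of radius $k$ around $J_j$, together with the formal-inverse variant to record reflections. Since $\restr{f_i}{J_j}$ is determined by the displacement shapes on the finitely many unit intervals meeting $J_j$, it depends only on the class of $J_j$ (up to the recorded reflection); so each $f_i$ acts by one and the same element of $F'$ on all $J_j$ in a given class, reflected on the reversed occurrences. For each class $\tau_s$ let $A_s\le\mathrm{Homeo}^+(\bR)$ be the group of homeomorphisms acting as a single $g\in F'$, transported by the canonical isometry to each $J_j$ of class $\tau_s$, reflected on the reversed occurrences, and identity elsewhere. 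Transporting $F'$, and using that these homeomorphisms are compactly supported in the interiors of the $J_j$, we obtain $A_s\cong F'$ with no ``germ at an endpoint'' obstruction — exactly as for the subgroups $\cK'$ and $\cL'$ of $G_\rho$. That $A_s\le G_\rho$ follows from Theorem~\ref{characterisation}: the slope and singularity conditions are immediate, and condition (3) holds with window constant $L+2k$ because the displacement at a point is then entirely read off from $\cW_\rho$; the reflection twist, together with the second quasi-periodicity axiom, is precisely what makes (3.b) hold, just as the $\rho$-dictated sign twist does for $\cK'$ and $\cL'$. The $A_s$ have disjoint supports, hence commute, so $A:=A_1\times\dots\times A_r$ is a finite direct sum of copies of $F'$, and $f_1,\dots,f_n\in A$ by the previous remark.

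The step I expect to be the real obstacle is arranging the intervals $J_j$ and the reflection twist so that (3.b) genuinely holds for every element of $A_s$, not merely for the given $f_i$: the reflection $\iota_{[m,m+1]}$ in (3.b) acts inside a single unit interval, whereas a block $J_j$ typically spans several, so one must choose the $J_j$ and the cut points compatibly with the lattice $\tfrac12\bZ$ (mirroring the way $\cL$ is built from the symmetric blocks $[n-\tfrac12,n+\tfrac12]$) so that the per-unit-interval reflections assemble into the reflection of $J_j$ that the twisted $F'$-action uses. Carrying out this bookkeeping — and checking that it imposes nothing beyond ``compactly supported in the interior,'' so that each summand is $F'$ rather than some larger Thompson-type group — is the technical heart of the argument; the small-$I_0$ case (where $I_0$ contains no point of $\tfrac12\bZ$) is handled by first observing that the relevant copies of $I_0$ still recur syndetically and adjusting the cut points accordingly.
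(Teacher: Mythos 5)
Your outline is essentially the argument the paper itself uses: Proposition \ref{Fprimefff} is quoted from \cite{ffflodha} without proof here, but the paper's own Propositions \ref{mainpropUS} and \ref{Fprime} are established by exactly your route --- confine the support to uniformly bounded integer-endpoint atoms, sort these into finitely many classes according to the surrounding word of radius $k_f$ together with the conjugate/flip-conjugate distinction, transport $F'$ to each class with the reflection twist on reversed occurrences, and verify membership in $G_\rho$ via the $K_\rho$ characterization with a single uniform constant. The reflection bookkeeping for condition (3.b) that you flag as the technical heart is likewise the step the paper's proof leaves to the reader, so there is no substantive divergence.
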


The following Lemma provides a construction of elements in $G_{\rho}$.

\begin{prop}\label{SpecialElements2}
Let $\rho$ be a quasi-periodic labelling and $I$ a compact interval with integer endpoints. 
Consider the word $\mathcal{W}_{\rho}(I,n)$ for some $n\in \mathbf{N}\setminus \{0\}$.
Fix the natural injective homomorphism $\phi_I:F'\to \textup{Homeo}^+(I)$ obtained by the topological conjugacy mapping $[0,1]$ to $I$ linearly. 
For each $f\in F'$, there is an element $g\in G_{\rho}$ that satisfies the following for each $x\in \mathbf{R}$.
\begin{enumerate}
\item If there is a compact interval $J$ containing $x$, with integer endpoints such that $|J|=|I|$
and $\mathcal{W}_{\rho}(J,n)=\mathcal{W}_{\rho}(I,n)$, 
then $$x\cdot g-x=x'\cdot \phi_I(f)-x'\qquad x'=x\cdot T_{J,I}$$  
\item If there is a compact interval $J$ containing $x$, with integer endpoints such that $|J|=|I|$
and $\mathcal{W}_{\rho}(J,n)^{-1}=\mathcal{W}_{\rho}(I,n)$,
then $$x\cdot g-x=x'-x'\cdot \phi_I(f)\qquad x'=x\cdot T_{J,I}^{or}$$  
\item If neither of the above is satisfied, then $x\cdot g=x$. 
\end{enumerate}
\end{prop}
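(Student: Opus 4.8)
The plan is to construct $g$ explicitly and then to certify that $g\in G_\rho$ using the identification $G_\rho=K_\rho$ of Theorem~\ref{characterisation}; thus it suffices to exhibit a homeomorphism of $\mathbf{R}$ satisfying $(1)$--$(3)$ and to check that it lies in $K_\rho$ in the sense of Definition~\ref{Krho}.

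\emph{Defining $g$.} Call an integer translate $J$ of $I$ a \emph{direct} occurrence if $\mathcal{W}_\rho(J,n)=\mathcal{W}_\rho(I,n)$, and a \emph{reversed} occurrence if $\mathcal{W}_\rho(J,n)^{-1}=\mathcal{W}_\rho(I,n)$. Clauses $(1)$--$(3)$ prescribe $g$ completely: on a direct occurrence $J$ it restricts to the copy of $f$ rescaled linearly onto $J$; on a reversed occurrence $J$ it restricts to the upside-down copy of $f$ on $J$ (the conjugate of $\phi_I(f)$ by the orientation-reversing isometry $T_{J,I}^{or}$, equivalently by the reflection $x\mapsto(\sup I+\inf J)-x$); and off every occurrence $g$ is the identity. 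Since $f\in F'$ is trivial on a neighbourhood of $\{0,1\}$ by Lemma~\ref{lemmaH}, each of these copies is trivial near $\partial J$, so the pieces glue to an element of $\mathrm{Homeo}^+(\mathbf{R})$ — \emph{provided} the prescriptions on occurrences that overlap agree. When $|I|=1$ this is automatic, as distinct integer translates of a unit interval meet in at most one point. When $|I|>1$ one first establishes the combinatorial claim that distinct occurrences of $I$ are pairwise disjoint; this is the place where the quasi-periodicity of $\rho$ is used, a repetition of $\mathcal{W}_\rho(I,n)$ at integer distance less than $|I|$ being ruled out by the recurrence and aperiodicity of $\rho$.

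\emph{Verifying $g\in K_\rho$.} Conditions $(1)$ and $(2)$ of Definition~\ref{Krho} are routine: $\phi_I(f)$ is piecewise linear with slopes integer powers of $2$ and breakpoints in $\mathbf{Z}[\tfrac12]$ (the breakpoints of $\phi_I(f)$ are $\inf I+|I|\cdot b$ for $b$ a breakpoint of $f$, with $\inf I,|I|\in\mathbf{Z}$), and the only additional maps used to form the copies — integer translations and the reflections $x\mapsto c-x$ with $c\in\mathbf{Z}$ — preserve $\mathbf{Z}[\tfrac12]$ and multiply slopes by $\pm 1$; since each occurrence contributes only finitely many breakpoints and only finitely many occurrences lie in any bounded interval, the singular set of $g$ is discrete. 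The real work is condition $(3)$: choose $k_g$ large enough that, whenever $x$ lies in an occurrence $J$, the window $\mathcal{W}(x,k_g)$ contains all of $\mathcal{W}_\rho(J,n)$ (for instance $k_g=2|I|+2n+2$). For $(3.a)$: if $x-y\in\mathbf{Z}$ and $\mathcal{W}(x,k_g)=\mathcal{W}(y,k_g)$, then $x$ lies in an occurrence $J$ precisely when $y$ lies in the occurrence $J+(y-x)$, of the same type and with $y$ at the same relative position, so that $x-x\cdot g=y-y\cdot g$ (both equal to $0$ if neither point lies in an occurrence). For $(3.b)$: if $x-y\in\mathbf{Z}$ and $\mathcal{W}(x,k_g)=\mathcal{W}^{-1}(y,k_g)$, then an occurrence $J\ni x$ corresponds to an occurrence $J'\ni y'$ with $y'=y\cdot\iota_{[m,m+1]}$, of the \emph{opposite} type, at the mirror-reversed relative position, and the identity $x-x\cdot g=y'\cdot g-y'$ is exactly the built-in compatibility between the direct copy of $\phi_I(f)$ on $J$ and its reflected copy on $J'$. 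This yields $g\in K_\rho=G_\rho$.

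\textbf{The main obstacle} I expect is the sign bookkeeping in $(3.b)$: one has to compose the orientation-reversing isometry implicit in the reversed-occurrence clause with the linear isometry $T_{J,I}^{or}$ and the local reflection $\iota_{[m,m+1]}$, and verify that the result is an occurrence of the correct type at the correct position whose orientation makes the displacement identity come out precisely as stated; this is where the mirror-closure axiom of $\rho$ is invoked and is the delicate part of the argument. A secondary technical point is the combinatorial disjointness of occurrences that is needed, when $|I|>1$, to make the definition of $g$ consistent. One could instead try to assemble $g$ from the generators $\zeta_i,\chi_i$ via the homomorphisms $\lambda,\pi$, but the direct check against the description $K_\rho$ is cleaner.
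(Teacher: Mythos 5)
Your approach is the same as the paper's: the entire published proof of Proposition~\ref{SpecialElements2} is the single observation that the prescribed homeomorphism $g$ lies in $K_{\rho}$, satisfying Definition~\ref{Krho} with $k_g=2|\mathcal{W}(I,n)|$, whence $g\in G_{\rho}$ by Theorem~\ref{characterisation}. Your verification of conditions (1)--(3) of Definition~\ref{Krho} -- in particular the window size $k_g$ large enough to see an entire occurrence, the matching of occurrences under the hypotheses of (3.a), and the type-reversal under (3.b) -- is a faithful expansion of what the paper leaves implicit, and the sign bookkeeping you flag in (3.b) is indeed the only place where care is needed.

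One claim in your write-up is not correct as stated: you assert that when $|I|>1$, distinct occurrences of $\mathcal{W}_{\rho}(I,n)$ are pairwise disjoint, ``ruled out by the recurrence and aperiodicity of $\rho$.'' Quasi-periodicity does not rule out a word overlapping an integer translate of itself (already $aba$ overlaps itself inside $ababa$, and aperiodic minimal labellings contain such configurations in abundance); aperiodicity only forbids the overlap pattern from propagating over all of $\mathbf{R}$. So the well-definedness of $g$ on overlapping occurrences is not settled by your argument. This issue is not addressed in the paper either -- the one-line proof takes the consistency of clauses (1)--(3) for granted -- so you have not introduced a gap relative to the paper, but you should not present the disjointness as a consequence of quasi-periodicity. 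If you want to handle it, either enlarge $n$ so that the decorated word determines the occurrence uniquely at the point where the proposition is applied, or restrict the definition of $g$ to a maximal pairwise disjoint family of occurrences, which is all that the application in Step~2 of the uniform simplicity proof requires.
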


\begin{proof}
The element $g$ lies in $G_{\rho}$ since it lies in $K_{\rho}$,
satisfying the hypothesis of Definition \ref{Krho} with $k_g=2|\mathcal{W}(I,n)|$.
\end{proof}

\section{Proof that $G_\rho$ is not finitely presented}

\begin{lemma} \label{glocal}
Let $\s$ and $\t$ be any two labellings and $G_\s$, $G_\t < \mathrm{Homeo}^+(\bR)$ the corresponding
groups of homeomorphisms of the reals. Let $S_\s$ and $S_\t$ denote the standard generator tuples for $G_\s$ and $G_\t$, respectively.
Let $w(x_1, \ldots, x_6)$ be an arbitrary formal word on $x_1^{\pm 1}, \ldots, x_6^{\pm 1}$. If $S = (s_1, \ldots, s_6)$
we will write $w(S) = w(s_1, \ldots, s_6)$. Suppose $n \in \bN$ is such that $\lv w \rv \leq n$. Finally, let $x, y \in \bR$ be any two reals.

If $m = y - x \in \bZ$ and $\s(\a) = \t(\a + m)$ for all $\a \in \frac{1}{2} \bZ \cap (x - n, x + n)$, then
$x \cdot w(S_\s) - x = y \cdot w(S_\t) - y$.
\end{lemma}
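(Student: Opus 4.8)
The plan is to process $w$ one letter at a time, tracking in parallel the image of $x$ under the corresponding prefix of $w(S_\s)$ and the image of $y$ under the corresponding prefix of $w(S_\t)$, while maintaining two invariants: that the two tracked points differ by exactly $m$, and that after $\ell$ letters the $\s$-side tracked point has moved at most $\ell$ away from $x$. The second invariant is what guarantees we stay inside the window $(x-n, x+n)$ on which $\s$ and $\t$ are synchronized, since a word of length at most $n$ is processed in at most $n$ steps.

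First I would record the locality of the standard generators. For each $i$, the homeomorphism $\zeta_i^\s$ carries every interval $[p, p+1]$ with $p \in \bZ$ onto itself, fixing its endpoints, and on that interval it equals the conjugate — by the translation $t \mapsto t + p$ taking $[0,1]$ to $[p,p+1]$ — of $\nu_i$ if $\s(p + \tfrac12) = b$ and of $\iota \circ \nu_i \circ \iota$ if $\s(p + \tfrac12) = b^{-1}$; in either case the relevant element is an orientation-preserving self-homeomorphism of $[0,1]$ fixing $0$ and $1$. Likewise $\chi_i^\s$ carries each interval $[p - \tfrac12, p + \tfrac12]$ with $p \in \bZ$ onto itself and on it is the analogous translate-conjugate, now governed by the single label $\s(p)$; the same holds with $\t$ in place of $\s$. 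From this I extract two consequences. First: every generator, and every inverse of a generator, displaces any point by strictly less than $1$, since it preserves the half-open unit interval containing the point and fixes that interval's endpoints. Second: the displacement $u \cdot s^{\pm 1} - u$ of a point $u$ under a generator $s$ depends only on $s$, the sign, the position of $u$ within the unit interval it occupies, and the single label ($\s(p + \tfrac12)$ for a $\zeta$, $\s(p)$ for a $\chi$) governing that interval; moreover it is unchanged if $u$ is translated by an integer, the governing interval and label position translated accordingly, and the label value read off $\t$ agrees — because the conjugating maps on the two sides are then translations differing by that integer.

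With these in place, write $w = x_{i_1}^{\e_1} \cdots x_{i_k}^{\e_k}$ with $k = \lv w \rv \le n$ (the case $k = 0$ being trivial, both sides equal to $0$), let $w_\ell$ be the length-$\ell$ prefix, and set $u_\ell = x \cdot w_\ell(S_\s)$ and $v_\ell = y \cdot w_\ell(S_\t)$. I would prove by induction on $\ell$ that $v_\ell - u_\ell = m$ and $\lv u_\ell - x \rv \le \ell$; the base case $\ell = 0$ is the hypothesis. For the inductive step, the $\ell$-th letter applies $s^{\e_\ell}$ with $s = \zeta_{i_\ell}$ or $s = \chi_{i_\ell}$ — take $s$ a $\zeta$, the $\chi$-case being identical with half-integer intervals (the governing label $\s(p)$ then sits at the integer $p$, still within distance $\tfrac12$ of the tracked point). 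Choose $p \in \bZ$ with $u_{\ell - 1} \in [p, p+1)$; then $v_{\ell - 1} = u_{\ell - 1} + m \in [p + m, p + m + 1)$ since $m \in \bZ$. The label governing the $\s$-side step sits at position $p + \tfrac12$, and
\[
\lv (p + \tfrac12) - x \rv \le \lv (p + \tfrac12) - u_{\ell - 1} \rv + \lv u_{\ell - 1} - x \rv \le \tfrac12 + (\ell - 1) \le n - \tfrac12 < n,
\]
so $p + \tfrac12 \in \tfrac12 \bZ \cap (x - n, x + n)$ and hence $\s(p + \tfrac12) = \t\bigl((p + \tfrac12) + m\bigr)$, which is the label governing the $\t$-side step. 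By the second consequence the two steps have equal displacement, whence $v_\ell - u_\ell = v_{\ell - 1} - u_{\ell - 1} = m$; by the first consequence, $\lv u_\ell - x \rv \le \lv u_\ell - u_{\ell - 1} \rv + \lv u_{\ell - 1} - x \rv < 1 + (\ell - 1) = \ell$. Taking $\ell = k$ yields $v_k - u_k = m = v_0 - u_0$, that is, $x \cdot w(S_\s) - x = y \cdot w(S_\t) - y$. I expect the only delicate point to be the displayed estimate: one must verify that the window radius $n$ (equal to the maximal allowed word length) strictly beats the worst-case accumulated drift $\ell - 1 \le n - 1$ together with the at-most-$\tfrac12$ offset between the tracked point and the label controlling its next move. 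Everything else is a mechanical unwinding of the definition of the generators.
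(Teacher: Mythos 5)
Your proof is correct and follows essentially the same route as the paper's: both establish that each standard generator displaces a point by at most $1$ and that the displacement at a point is determined by the label(s) in a radius-$\tfrac12$ window around it, then induct on the length of the word while tracking the accumulated drift to stay inside the window where $\s$ and $\t$ agree. The paper leaves the induction as a one-line remark; you have simply written out the bookkeeping (the invariant $v_\ell - u_\ell = m$, $\lv u_\ell - x\rv \le \ell$, and the estimate $\ell - \tfrac12 \le n - \tfrac12 < n$) in full, and it checks out.
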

\begin{proof}
Let us denote $S_\s = (s^\s_1, \ldots, s^\s_6)$ and $S_\t = (s^\t_1, \ldots, s^\t_6)$. To be clear, these are the same generator tuples which
in case of a fixed quasi-periodic labelling are more traditionally denoted $(\z_1, \z_2, \z_3, \chi_1, \chi_2, \chi_3)$.
Directly from the definition, for any $x \in \bR$ and $j = 1, \ldots, 6$, the distance between $x$ and $x \cdot s^\s_j$ can be at most 1, and same for $s^\t_j$.
For any $x, y \in \bR$, we can define blocks $X = \frac{1}{2} \bZ \cap \left[x - \frac{1}{2}, x + \frac{1}{2}\right]$
and $Y = \frac{1}{2} \bZ \cap \left[y - \frac{1}{2}, y + \frac{1}{2} \right]$.
If $x - y \in \bZ$ and $\s(X) = \t(Y)$, then $x \cdot s^\s_j - x = y \cdot s^\t_j - y$, again from the definition.
The desired conclusion is then a matter of inducting on $n$.
\end{proof}

\begin{lemma} \label{subwords}
Let $\rho$ be a quasi-periodic labelling, and let $k \in \bN$ be arbitrary. We can find a periodic labelling $\s$ such that
for any block $X \subset \frac{1}{2}\bZ$ of length $k$, there exist blocks $Y, Z \subset \frac{1}{2}\bZ$ satisfying
$\rho(X) = \s(Y)$ and $\s(X) = \rho(Z)$.
\end{lemma}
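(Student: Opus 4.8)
The plan is to extract from the quasi-periodicity of $\rho$ a single finite subword that is rich enough to serve, once repeated periodically, as the labelling $\s$. By the first quasi-periodicity axiom, there is an $n \in \bN$ such that every block $X$ of length $k$ appears as a subword of every block of length at least $n$. I would first enlarge $n$ so that, in addition, the word read on any block of length $n$ already contains (as a subword) the word read on \emph{every} block of length $k$ --- this is possible because there are only finitely many blocks of length $k$ up to the word they carry, so one can take the maximum of the finitely many thresholds provided by axiom (1). Fix one block $B$ of length $n$ and let $u = \rho(B)$ be the corresponding word of length $n$; thus every length-$k$ subword of the labelling $\rho$ occurs inside $u$.

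Next I would take care of the inversion requirement. By the second quasi-periodicity axiom applied to the block $B$, there is a block $\bar B$ with $\rho(\bar B) = \rho^{-1}(B) = u^{-1}$. Now I define $\s$ to be the periodic labelling obtained by tiling $\frac{1}{2}\bZ$ with alternating copies of $u$ and $u^{-1}$ (respecting the parity constraints: one must be slightly careful that the $a$-letters land on integers and the $b$-letters on half-integers, which is automatic since $u$ itself came from a genuine block of $\rho$ and $u^{-1}$ is its formal inverse read backwards, so the parity pattern is preserved when the block length $n$ is chosen even --- I would impose $n$ even at the start). Concretely, $\s$ has period $2n$ (in units of $\frac12$), reading $u$ on the first half and $u^{-1}$ on the second half of each period.

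It remains to verify the two containment assertions. For "$\rho(X) = \s(Y)$": given a block $X$ of length $k$, the word $\rho(X)$ is a length-$k$ subword of the labelling $\rho$, hence by construction it is a subword of $u$, hence of the word $\s$ reads on one of the copies of $u$ inside its period; since that copy is itself a block $Y$ of $\s$, we get $\rho(X) = \s(Y)$ after translating to align parities. For "$\s(X) = \rho(Z)$": a block $X$ of $\s$ of length $k$ reads either a length-$k$ subword of $u$ or a length-$k$ subword of $u^{-1}$ or a word straddling the junction between a $u$-copy and a $u^{-1}$-copy. In the first case $\s(X)$ is already a subword of $\rho(B)$, so we may take $Z \subseteq B$. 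In the second case $\s(X)$ is a subword of $u^{-1} = \rho(\bar B)$, so take $Z \subseteq \bar B$. The straddling case is the only real obstacle: a priori a word $u^{-1}u$-junction need not appear in $\rho$. I would handle it by instead choosing $B$ at the outset to be a block long enough that $u$ \emph{contains} an occurrence of every length-$(2k)$ subword of $\rho$ \emph{and} an occurrence of a word of the form $v^{-1}v'$ where $v$ is a suffix of $u$ and $v'$ a prefix of $u$ of the relevant lengths --- more cleanly, one arranges the period of $\s$ so that each junction word of length $k$ already occurs inside a single copy of $u$ or inside a single copy of $u^{-1}$. The simplest fix: replace $u$ by $u' := u \bar u$ where $\bar u = u^{-1}$, regarded as a word coming from a genuine block $BB'$ of $\rho$ with $\rho(B') = u^{-1}$ (such a $B'$ exists by axiom (2), but it need not be adjacent to $B$ in $\rho$; however $\rho(B')$ as an abstract word is $u^{-1}$, and by axiom (1) applied with the pair-block we can find a genuine block of $\rho$ realizing $u\,u^{-1}$ as a subword provided we first pass to a long enough block of $\rho$ containing both patterns). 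Then take $\s$ of period $2|u'|$ reading $u'$ then $(u')^{-1} = u u^{-1}$ on alternate halves; every length-$k$ window of $\s$, junction or not, now lies inside a single translate of $u'$ or of $(u')^{-1}$, each of which is $\rho(\text{some block})$ by construction, giving the required block $Z$. The containment $\rho(X)=\s(Y)$ still holds since $u' \supseteq u \supseteq$ every length-$k$ subword of $\rho$. I expect the bookkeeping of parities and of which block of $\rho$ realizes each needed pattern to be the fiddly part; the conceptual content is entirely in the two quasi-periodicity axioms.
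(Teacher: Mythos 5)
Your proposal goes wrong in three related places, all stemming from a misreading of what the lemma asks for. The lemma only requires that $\rho$ and $\s$ have the same set of subwords of length $k$ (read on blocks, with no inverses anywhere in the statement), so the ``inversion requirement'' you set out to handle does not exist, and the detour through the second quasi-periodicity axiom is what creates your difficulties. Concretely: (i) the alternating $u,u^{-1},u,u^{-1},\dots$ tiling is not a labelling at all, for any choice of the length of $u$. Formal inversion reverses the word, so if $u$ ends in a $b^{\pm1}$-type letter then $u^{-1}$ begins with a $b^{\pm1}$-type letter, and at the seam between a copy of $u$ and the following copy of $u^{-1}$ you get two consecutive letters of the same type, violating the requirement that integer positions carry $a^{\pm1}$ and half-integer positions carry $b^{\pm1}$ (the case where $u$ ends in an $a^{\pm1}$-type letter fails symmetrically). (ii) Your fix for the junction problem relies on realizing the concatenation $u\,u^{-1}$ as a genuine subword of $\rho$; neither axiom gives you this. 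Axiom (2) produces \emph{some} block elsewhere reading $u^{-1}$, not one adjacent to a block reading $u$, and axiom (1) only says that words which \emph{already are} subwords of $\rho$ recur. (iii) Even granting $u'=u\,u^{-1}$, windows of $\s$ straddling the boundary between consecutive periods (tail of $(u')^{-1}$ followed by head of $u'$) are still not contained in a single copy of $u'$ or $(u')^{-1}$, so the junction problem reappears one level up.

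The paper's proof avoids all of this and uses only axiom (1). Take a block $X$ whose word contains every length-$k$ subword of $\rho$, let $Z$ be the block of the first $k$ positions of $X$, and use recurrence to find a later occurrence $Z'$ of the word $\rho(Z)$ beginning at some $\ell$ beyond the end of $X$. Define the period to be $X' = \{n, n+\tfrac12,\dots,\ell-\tfrac12\}$ and let $\s$ repeat $\rho(X')$. The point of stopping the period \emph{just before} a recurrence of its own $k$-letter prefix is that a length-$k$ window of $\s$ straddling a junction reads a suffix of $\rho(X')$ followed by a prefix of $\rho(Z')$, and that exact string occurs verbatim in $\rho$ at the corresponding positions; so every length-$k$ subword of $\s$, junction or not, is a subword of $\rho$, while containment in the other direction is immediate since $\rho(X)$ sits inside $\rho(X')$. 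This choice of where to cut the period is the idea your proposal is missing.
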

\begin{proof}
There are only finitely many subwords of $\rho$ of length $k$, and we can find a block
$X = \left\{ n, n + \frac{1}{2}, \ldots, n + \frac{1}{2} m \right\}$ such that the word
$\rho(X) = \rho(n) \rho({n + \frac{1}{2}}) \cdots \rho({n + \frac{1}{2}m})$ will contain them all as subwords. That is, if $Y \subset \frac{1}{2} \bZ$ is a block of size $k$, then there exists a block $Y' \subseteq X$
such that $\rho(Y) = \rho(Y')$. Let $Z = \left\{ n, n + \frac{1}{2}, \ldots, n + \frac{1}{2}(k - 1) \right\}$
be the block containing the first $k$ elements of $X$. By the quasi-periodic nature of $\rho$,
we can find another block $Z' = \left\{ \ell, \ell + \frac{1}{2}, \ldots, \ell + \frac{1}{2}(k - 1) \right\}$
with $\ell > n + \frac{1}{2} m$, such that $\rho(Z) = \rho(Z')$. Let
$X' = \left\{ n, n + \frac{1}{2}, \ldots, \ell - \frac{1}{2} \right\}$.
Then define $\s$ to be the unique periodic labelling with period $\lv X' \rv$ which agrees with $\rho$ on $X'$.
(I.e., $\s$ is just a string of copies of $\rho(X')$ repeating one after another in both directions.)

Any subword $w$ of $\rho$ of length $k$ appears as a subword in $\rho(X)$, which is a subword of $\rho(X')$, and so $w$ is also a subword of $\s$.
Conversely, let $v$ be a subword of $\s$ of length $k$. If $v$ is a subword of $X'$ then it is a subword of $\rho$. If on the other hand
$v$ spans the boundary between two consecutive copies of $X'$, then it is also a subword of $\rho$ due to the fact that the first
$k$ letters in $\rho$ immediately following $\rho(X')$ are the same as the first $k$ letters in $\rho(X')$.
\end{proof}

\begin{lemma} \label{limitofperiodic}
Consider $(G_\rho, S_\rho)$ as a point in $\cG_6$, the Grigorchuk's space of marked groups on 6 generators.
There exists a sequence of periodic labellings $\{\rho_n\}$ such that the corresponding sequence of marked groups
$\{ (G_{\rho_n}, S_{\rho_n}) \} \subseteq \cG_6$ satisfies
$(G_{\rho_n}, S_{\rho_n}) \xrightarrow{n \to \i} (G_\rho, S_\rho)$
\end{lemma}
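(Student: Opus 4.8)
The plan is to feed Lemma~\ref{subwords} into Lemma~\ref{glocal}. The guiding observation is that, for a word of bounded length, triviality in $G_\rho$ is a \emph{local} condition in two compatible ways: on one hand $w(S_\rho) = e$ in $G_\rho \le \mathrm{Homeo}^+(\bR)$ precisely when $x \cdot w(S_\rho) = x$ for every $x \in \bR$; on the other hand, by Lemma~\ref{glocal}, the displacement $x \cdot w(S_\rho) - x$ is determined, up to an integer translation of the point, by the restriction of $\rho$ to the window $\tfrac{1}{2}\bZ \cap (x - |w|, x + |w|)$. Hence two labellings that share all subwords of length $\sim 2|w|$ must give $w$ the same triviality status in the two associated groups, and all that is needed is to turn this into a statement about the metric on $\cG_6$.

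Fix $N \in \bN$. I would apply Lemma~\ref{subwords} to $\rho$ with a parameter $k$ chosen comfortably larger than $4N$, producing a periodic labelling $\s = \s_N$ for which the blocks of length $k$ occurring in $\rho$ and those occurring in $\s$ are the same. The claim is then that for every formal word $w$ on $x_1^{\pm 1}, \ldots, x_6^{\pm 1}$ with $|w| \le N$ one has $w(S_\rho) = e$ if and only if $w(S_\s) = e$. Granting this, $\nu\big((G_\rho, S_\rho), (G_{\s_N}, S_{\s_N})\big) \ge N$, so $d\big((G_\rho, S_\rho), (G_{\s_N}, S_{\s_N})\big) \le 2^{-N}$, and setting $\rho_N := \s_N$ yields the required sequence; note that each $\rho_N$ is a genuine (periodic, hence typically not quasi-periodic) labelling and that $S_{\rho_N}$ is by construction a generating $6$-tuple, so $(G_{\rho_N}, S_{\rho_N})$ is a legitimate point of $\cG_6$.

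To prove the claim, assume $w(S_\rho) = e$ and fix $x \in \bR$. I would pick a block $X \subseteq \tfrac{1}{2}\bZ$ of length $k$ with $\tfrac{1}{2}\bZ \cap (x - N, x + N) \subseteq X$, which is possible since the latter set has at most $4N < k$ elements and any consecutive sub-block extends to a block of the prescribed length. Lemma~\ref{subwords} provides a block $Z$ with $\s(X) = \rho(Z)$; since these are equal as formal words they start with a letter of the same type ($a^{\pm 1}$ versus $b^{\pm 1}$), so the translation $m$ carrying $X$ onto $Z$ is an \emph{integer} and satisfies $\s(\a) = \rho(\a + m)$ for all $\a \in X$, in particular for all $\a \in \tfrac{1}{2}\bZ \cap (x - N, x + N)$. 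Applying Lemma~\ref{glocal} with $(\s, \t) := (\s, \rho)$, the points $x$ and $y := x + m$, and $n := N \ge |w|$, one gets $x \cdot w(S_\s) - x = y \cdot w(S_\rho) - y = 0$. As $x$ was arbitrary, $w(S_\s) = e$. The converse is identical, swapping the roles of $\rho$ and $\s$ and using the block $Y$ with $\rho(X) = \s(Y)$ in place of $Z$.

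The part that takes care rather than ingenuity is the bookkeeping in the middle step: verifying that matching blocks differ by an integer translation (so that Lemma~\ref{glocal} applies at all) and that the length $k$ fed to Lemma~\ref{subwords} is large enough to contain a radius-$N$ window around an arbitrary point. I do not expect a genuine obstacle beyond this — the two cited lemmas already package the combinatorial and dynamical content — so the real work is arranging the quantifiers in the correct order: first choose the accuracy $N$, then the period, then for each point a window and a matching integer translate.
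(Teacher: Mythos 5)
Your proposal is correct and follows essentially the same route as the paper: apply Lemma \ref{subwords} with a parameter on the order of $4n$ to produce a periodic labelling sharing all short subwords with $\rho$, then use Lemma \ref{glocal} to transfer the vanishing of displacements $x \cdot w(S) - x$ between the two groups, point by point. The only cosmetic differences are that the paper argues by contradiction where you argue directly, and that you make explicit the (correct) observation that matching blocks must differ by an integer translation because positions in $\bZ$ and in $\tfrac{1}{2}\bZ \setminus \bZ$ carry letters of different types.
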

\begin{proof}
Below, we will understand $w(x_1, \ldots, x_6)$ to mean a formal word on $x_1^{\pm 1}, \ldots, x_6^{\pm 1}$.
We will also use $w(S)$ to mean $w(s_1, \ldots, s_6)$ where $S = (s_1, \ldots, s_6)$, and similarly for $S_\rho$.

The following family of sets $\{ B_n \}$, $n \in \bN$, is a neighborhood base at $(G_\rho, S_\rho)$ for the topology on $\cG_6$:
\begin{align*}
B_n = \big\{ (G, S) \in \cG_6 : & \text{ for all words } w(c_1, \ldots, c_6) \text{ of length } \lv w \rv \leq n, \\
& \ w(S) = e \text{ in $G$ if and only if } w(S_\rho) = e \text{ in $G_\rho$} \big\}
\end{align*}
Notice $B_n \subseteq B_m$ whenever $n \geq m$. So it's sufficient if
given $n \in \bN$, we find a periodic labelling $\rho_n$ such that $(G_{\rho_n}, S_{\rho_n}) \in B_n$.
To that end, let $\rho_n$ be the periodic labelling, whose existence is guaranteed by Lemma \ref{subwords} with $k = 4n$, such that
every subword of $\rho$ of length at most $4n$ is also a subword of $\rho_n$ and every subword of $\rho_n$
of length at most $4n$ is also a subword of $\rho$.

Take any $w(s_1, \ldots, s_6)$ of length at most $n$ and suppose $w(S_{\rho}) = e$.
We want to show that $w(S_{\rho_n}) = e$. Assuming otherwise, we take an arbitrary $y \in \bR$ such that $y \cdot w(S_{\rho_n}) - y\neq 0$. Let $Y = \frac{1}{2}\bZ \cap {(y - n, y + n)}$.
Observe that $Y$ is a block of length less than $4n$. By construction, there exists another block $X=\frac{1}{2}\bZ \cap {(x - n, x + n)}$ such that
$\rho_n(Y) = \rho(X)$. Let $m = \min(Y) - \min(X)$ and $y = x + m$. Then $y - x = m \in \bZ$ and $Y = \frac{1}{2}\bZ \cap (y - n, y + n)$.
Since $\rho_n(Y) = \rho(X)$, we have $\rho_n(\a + m) = \rho(\a)$ for every $\a \in \frac{1}{2}\bZ \cap (x - n, x + n)$, so by Lemma
\ref{glocal} it follows that $y \cdot w(S_{\rho_n}) - y = x \cdot w(S_\rho) - x$. But $x \cdot w(S_\rho) - x = 0$, since $w(S_{\rho}) = e$.
Hence also $y \cdot w(S_{\rho_n}) - y = 0$, contradicting our assumption.
By an analogous argument, $w(S_{\rho}) = e$ whenever $w(S_{\rho_n}) = e$, so $(G_{\rho_n}, S_{\rho_n}) \in B_n$ as needed.

\end{proof}

We call a group $G$ {\em finitely discriminable} if there exists a finite subset $F \subseteq G \setminus \{ e \}$ such that every non-trivial
normal subgroup of $G$ contains an element of $F$. In particular, every simple group is finitely discriminable.
Let $G$ be a finitely generated group. Cornulier et al prove in \cite{de2007isolated} that if for some generating set $S$, the pair $(G, S) \in \cG_{\lv S \rv}$
is an isolated point in the space of marked groups, then same is true for every generating set. In such case we call $G$ an {\em isolated group}.
We will need the following characterization of isolated groups (see Proposition 2 in \cite{de2007isolated})
\begin{proposition}[Cornulier, Guyot and Pitsch] \label{isolatedgroups}
A group $G$ is isolated if and only if the following properties are satisfied
\begin{enumerate}
\item [(i)]
$G$ is finitely presented;
\item [(ii)]
$G$ is finitely discriminable.
\end{enumerate}
\end{proposition}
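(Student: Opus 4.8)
The plan is to identify $\cG_n$ with the space of normal subgroups of the free group $\bF_n$ of rank $n$, equipped with the topology induced by the metric $d$. Concretely, a marked group $(G,S)$ with $|S| = n$ is the same datum as a surjection $\bF_n \twoheadrightarrow G$, hence the same as its kernel $N \trianglelefteq \bF_n$, and two such are marked isomorphic exactly when the kernels coincide. Under this correspondence the relations $w(S) = e$ of length $\leq k$ record precisely which length-$\leq k$ words lie in $N$, so $d$-convergence is convergence of normal subgroups in the product topology on $\{0,1\}^{\bF_n}$ (the Chabauty topology). A basic neighborhood of $N$ is therefore cut out by a finite set $A \subseteq N$ together with a finite set $B \subseteq \bF_n \setminus N$, namely $\{ N' \trianglelefteq \bF_n : A \subseteq N',\ B \cap N' = \emptyset \}$. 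Thus $G$ is isolated if and only if there exist finite $A \subseteq N$ and finite $B \subseteq \bF_n \setminus N$ for which $N$ is the only normal subgroup of $\bF_n$ containing $A$ and disjoint from $B$. Finally I record the two translations: $G = \bF_n/N$ is finitely presented iff $N = \langle\langle A \rangle\rangle$ is the normal closure of a finite set, and finitely discriminable iff there is a finite $B \subseteq \bF_n \setminus N$ meeting every normal subgroup $N' \supsetneq N$.

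For the implication (i)$\,\wedge\,$(ii) $\Rightarrow$ isolated, I would take $A$ finite with $\langle\langle A\rangle\rangle = N$ (finite presentability) and $B$ finite witnessing finite discriminability, and check that the associated basic neighborhood isolates $N$: if $N'$ is normal with $A \subseteq N'$ and $B \cap N' = \emptyset$, then $N = \langle\langle A\rangle\rangle \subseteq N'$, while $N' \supsetneq N$ would force $N'$ to meet $B$, a contradiction; hence $N' = N$.

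For the converse, suppose $G$ is isolated, with witnessing finite sets $A \subseteq N$ and $B \subseteq \bF_n \setminus N$. Finite discriminability is then immediate: the images $\{\, bN : b \in B \,\}$ lie in $G \setminus \{e\}$, and any nontrivial normal subgroup $M \trianglelefteq G$ lifts to some $N' \supsetneq N$; were $M$ to avoid all $bN$, then $N'$ would contain $A$ and be disjoint from $B$ yet differ from $N$, contradicting isolation, so $M$ contains some $bN$. The substantive point is finite presentability. Here I would consider the directed family of finitely normally generated subgroups $N_\alpha = \langle\langle A \cup F_\alpha\rangle\rangle$, indexed by the finite subsets $F_\alpha \subseteq N$ ordered by inclusion; these satisfy $A \subseteq N_\alpha \subseteq N$ and $\bigcup_\alpha N_\alpha = N$, so the net $(N_\alpha)$ converges to $N$ in $\cG_n$. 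Since $N$ is an isolated point, the net is eventually equal to $N$, whence $N = N_{\alpha_0} = \langle\langle A \cup F_{\alpha_0}\rangle\rangle$ for some $\alpha_0$, exhibiting $N$ as finitely normally generated and $G$ as finitely presented.

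The main obstacle is exactly this last step: forcing $N$ to be finitely normally generated rather than merely containing the finite set $A$. The naive attempt to factor out $\langle\langle A\rangle\rangle$ and locate an intermediate normal subgroup fails, because $N/\langle\langle A\rangle\rangle$ may be a minimal normal subgroup of $\bF_n/\langle\langle A\rangle\rangle$ and so admit no proper nontrivial $\bF_n$-invariant subgroup. The net argument sidesteps this by approximating $N$ from below through finitely normally generated subgroups and using that a net convergent to an isolated point must stabilize; the one place demanding care is verifying that $(N_\alpha)$ genuinely converges to $N$ in the Chabauty topology, i.e.\ that every word of $N$ eventually enters some $N_\alpha$ while no word outside $N$ ever does.
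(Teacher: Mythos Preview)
The paper does not supply a proof of this proposition; it is quoted from \cite{de2007isolated} and used as a black box. Your argument is correct and is essentially the standard one.

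One simplification is available in your converse direction. Once you have finite sets $A\subseteq N$ and $B\subseteq\bF_n\setminus N$ with $\{N'\trianglelefteq\bF_n:A\subseteq N',\ B\cap N'=\emptyset\}=\{N\}$, the normal closure $N_0=\langle\langle A\rangle\rangle$ already satisfies $A\subseteq N_0$ and $N_0\subseteq N$, hence $B\cap N_0=\emptyset$; thus $N_0$ lies in the isolating neighborhood and therefore $N_0=N$. This yields finite presentability in one line, so your net argument, while perfectly correct, is not needed. Your remark that the ``naive attempt'' fails seems to anticipate a different strategy (producing a strictly intermediate normal subgroup between $\langle\langle A\rangle\rangle$ and $N$), but no such intermediate is required: $\langle\langle A\rangle\rangle$ itself is the witness.
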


We will need the following Lemma.

\begin{lemma}\label{nonisom}
If $\s$ is a periodic labelling then $G_\s$, then $G_\s \neq G_\rho$.
\end{lemma}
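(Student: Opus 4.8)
The goal is to show that if $\s$ is a periodic labelling, then $G_\s \neq G_\rho$ as subgroups of $\mathrm{Homeo}^+(\bR)$, where $\rho$ is the fixed quasi-periodic labelling. Note the statement is about equality of the subgroups of $\mathrm{Homeo}^+(\bR)$, not about marked-group isomorphism, so we may freely use the characterization $G_\rho = K_\rho$ from Theorem \ref{characterisation} and its analogue would-be statement for $\s$. The plan is to exhibit an element $g$ which lies in $G_\s$ but cannot lie in $G_\rho$, exploiting the fact that periodicity of $\s$ produces "too many" translation-like coincidences while quasi-periodicity of $\rho$ (specifically the subword-reversal axiom) forces certain symmetric constraints on any element of $K_\rho$.

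Here is the approach in more detail. First I would record the key structural difference: if $\s$ has period $p$ (so $\s(\a + p) = \s(\a)$ for all $\a \in \tfrac12\bZ$), then the translation $\tau_p \colon x \mapsto x + p$ normalizes the local data defining $\s$, and in fact $G_\s$ contains elements supported on unboundedly many disjoint fundamental domains with identical behaviour; more importantly, one can build an element $g \in K_\s$ (using Proposition \ref{SpecialElements2} applied to $\s$, or directly checking Definition \ref{Krho} for $\s$) whose "bump" shape is supported on exactly the blocks of $\tfrac12\bZ$ whose labelling word equals a fixed word $W$, and such that $W$ never appears reversed as a subword of $\s$ if we choose $W$ to contain, say, a long run of $a$'s that by periodicity forces its reversal $W^{-1}$ (a long run of $a^{-1}$'s) to be absent. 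Concretely: choose a word $W$ realized as a subword of $\s$ whose formal inverse $W^{-1}$ is not a subword of $\s$ — this is possible precisely because $\s$ is periodic and (unless $\s$ itself happens to be one of the degenerate palindromic-type labellings) the multiset of length-$k$ subwords of a periodic word is not closed under formal inversion for large $k$; if $\s$ happens to be closed under inversion, instead choose $W$ realized multiple times but with different "offsets" so that the element forced by axiom 3.a for $\s$ cannot satisfy axiom 3.a for $\rho$, again because a long $a$-run subword of $\s$ either is or is not a subword of $\rho$, and a counting/density argument distinguishes the two periodic-vs-quasiperiodic regimes.

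The cleanest route, which I would try first: pick $g \in G_\s$ that is \emph{nontrivial}, \emph{compactly supported}, and supported inside a single interval $[N, N+1]$, but arrange (using periodicity of $\s$) that there are infinitely many disjoint integer-translates $[N + jp, N + jp + 1]$ on which the local labelling word of $\s$ agrees, so that the "genuinely $G_\s$" element — the one whose support is \emph{all} of these translates with identical bumps — is an element $G \in G_\s$. Then suppose $G \in G_\rho = K_\rho$ and derive a contradiction from Definition \ref{Krho}(3): the constant $k_G$ would have to reconcile the support pattern of $G$ with the subword structure of $\rho$; since $\rho$ is quasi-periodic, \emph{every} finite subword of $\rho$ recurs syndetically, so the support of $G$ restricted to $[N+jp, N+jp+1]$ being nonempty would force nonempty support on a syndetic set of integer intervals matching that subword in $\rho$ — but we can instead choose $G$ with support pattern dictated by a \emph{$\s$-subword that is not a $\rho$-subword}, making the support empty everywhere in the $\rho$-picture, i.e. $G = \id$, contradicting $G \neq \id$. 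This last move needs: a length-$k$ word that is a subword of $\s$ but not of $\rho$. Since $\rho$ is quasi-periodic it contains \emph{all sufficiently long} words only if it were "full", which it is not — more carefully, $\rho$ has finitely many subwords of each length $k$ (this is part of being quasi-periodic with the recurrence axiom), and a periodic $\s$ likewise has finitely many; if the length-$k$ subword sets ever coincide for all $k$ then a short combinatorial argument shows $\rho$ would itself be periodic, contradicting quasi-periodicity-without-periodicity — but wait, $\rho$ \emph{could} be periodic a priori. So I must handle the case $\rho$ periodic too: then $G_\rho = G_\s$ is possible only if they have the same (finite) subword sets closed appropriately, and one reduces to showing two periodic labellings with distinct period-words give distinct groups, or $\rho$ periodic is excluded elsewhere in the paper; if the paper's standing assumption is that $\rho$ is a fixed quasi-periodic labelling that is \emph{not} periodic, I would invoke that.

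The main obstacle, and where I would spend the most care, is the combinatorial step: producing a finite word that is a subword of the periodic $\s$ but \emph{not} a subword of $\rho$ (or, symmetrically, vice versa), and ruling out the coincidental case. The honest resolution is: if every length-$k$ subword of $\s$ is a subword of $\rho$ for all $k$, then since $\s$ is periodic with period word $u$, the bi-infinite word $\rho$ contains $u^m$ as a subword for every $m$; combined with the quasi-periodic recurrence axiom for $\rho$, one shows $\rho$ must equal the periodic word $u^{\pm\infty}$ up to a shift — hence $\rho$ is periodic. So under the blanket hypothesis that the fixed $\rho$ is genuinely quasi-periodic (the generic, non-periodic case the paper cares about), there is a length-$k$ word realized in $\s$ but not in $\rho$; feeding that word into Proposition \ref{SpecialElements2} for the labelling $\s$ produces a nontrivial $g \in G_\s$ which, were it in $K_\rho$, would be forced by Definition \ref{Krho}(3.a) to have empty support (no matching $\rho$-block exists anywhere), contradiction. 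I would then add the remark that the remaining case — $\rho$ itself periodic — is either excluded by hypothesis or handled by the direct observation that distinct minimal period-words yield distinct subword sets and hence, by the same element-construction argument, distinct groups, while the subword set already determines $G_\rho$ up to the group-equality in question.
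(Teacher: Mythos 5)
Your proposal departs entirely from the paper's argument, and it has two genuine gaps. The first is a gap of scope: although the lemma is literally worded ``$G_\s \neq G_\rho$'', it is invoked in the proof of Theorem \ref{notfp} as ``$G_\s \not\cong G_\rho$'' --- what is needed is that no periodic $G_\s$ is \emph{abstractly isomorphic} to $G_\rho$, since the point is that $(G_\rho, S_\rho)$ is a limit of marked groups not isomorphic to it. Your argument, which aims to exhibit an element of $G_\s$ not lying in $G_\rho$ inside $\mathrm{Homeo}^+(\bR)$, cannot deliver this even if completed. The paper instead uses an isomorphism invariant: every element of a periodic $G_\s$ commutes with a suitable integer translation, so $G_\s$ carries the nontrivial homogeneous translation-number quasimorphism $f \mapsto \lim_n \frac{0\cdot f^n}{n}$, whereas $G_\rho$ has vanishing degree-two bounded cohomology with trivial real coefficients by \cite{ffflodha} and hence admits no nontrivial homogeneous quasimorphism. (The paper also records Lemma \ref{lifting} as an alternative: $G_\s$ has a central copy of $\bZ$, while $G_\rho$ is simple.)

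Second, even for the weaker non-equality statement, your key step fails as written. You assert that an element $g \in G_\s$ built from a word $W$ occurring in $\s$ but not in $\rho$ would, if it belonged to $K_\rho$, ``be forced by Definition \ref{Krho}(3.a) to have empty support (no matching $\rho$-block exists anywhere).'' That is not what condition (3.a) says: it is an implication from coincidences of \emph{$\rho$-words} around two points to coincidences of the displacements $x - x\cdot g$; it does not require $\Supp(g)$ to be located at occurrences of any prescribed word, and every point of $\bR$ has \emph{some} $\rho$-word around it. To get a contradiction you would have to compare the syndetic recurrence pattern (in $\rho$) of the $\rho$-word at a point of $\Supp(g)$ with the $\s$-periodic support pattern of $g$ and show they cannot agree --- a nontrivial further argument your write-up does not supply. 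Your unresolved worry about ``what if $\rho$ is itself periodic'' is also resolvable but left hanging: the reversal axiom of quasi-periodicity is incompatible with periodicity (compare the proof of Lemma \ref{periodicsubwords}(2)), so the case never arises. Finally, if one only wants $G_\s \neq G_\rho$ as subgroups of $\mathrm{Homeo}^+(\bR)$, there is a one-line route you missed: by the proof of Lemma \ref{lifting}, $G_\s$ contains fixed-point-free dyadic translations, while every nontrivial element of $G_\rho$ has fixed points (Lemma \ref{transition}); but again this does not yield the non-isomorphism the application requires.
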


\begin{proof}
If $\s$ is a periodic labelling then all elements of $G_\s$ commute with some suitable positive integer translation. Then $G_\s$ satisfies that the map $\phi:G_{\s}\to \mathbf{R}$ given by $f\mapsto \lim_{n\to \infty} \frac{0\cdot f^n}{n}$ is a nontrivial homogeneous quasimorphism (this sort of a quasimorphism is sometimes called the translation number quasimorphism, see example $2.3$ in \cite{ffflodha} for details, noting that translation by $1$ can be replaced by translation by any positive integer and thereby adjusting the defect in the example). However, for a quasi-periodic labelling $G_\rho$ such nontrivial homogeneous quasimorphisms do not exist since they have vanishing bounded cohomology in degree $2$ with trivial real coefficients (see Theorem $1.5$ in \cite{ffflodha}). 
\end{proof}

We remark that Lemma \ref{lifting} from the subsequent section also serves as an alternative proof of Lemma \ref{nonisom}.

\begin{proof}[Proof of Theorem \ref{notfp}]
If $\s$ is a periodic labelling then by Lemma (\ref{nonisom}), $G_\s \not\cong G_\rho$.
Lemma (\ref{limitofperiodic}) then implies that $(G_\rho, S_\rho)$ is a limit of a sequence in $\cG_6$ whose elements are not isomorphic to $(G_\rho, S_\rho)$,
which means $G_\rho$ is not an isolated group. Since $G_\rho$ is simple and hence finitely discriminable, Proposition (\ref{isolatedgroups}) tells us that $G_\rho$ cannot be
finitely presented.
\end{proof}

\section{Periodic labellings.}
In this section we study the structure of the groups $G_{\rho}$ where $\rho$ is a periodic labelling.  
We will need the following slight generalization of Thompson's group~$T$.
\begin{definition}
Let $n \in \bZ_+$. Let $C(n) = [0, n] / \{0, n\}$, that is, $C(n)$ is constructed from the closed interval $[0, n]$ by gluing together the endpoints.
Define $T(n)$ to be the group of piecewise linear homeomorphisms $f \colon C(n) \to C(n)$ which satisfy
\begin{enumerate}
\item
$f$ is differentiable except at finitely many points.
\item
For any $x \in C(n)$ where $f'$ does exist, $f'(x)$ is an integer power of 2.
\item
For any $x \in C(n)$ where $f'$ does not exist, both $x$ and $f(x)$ are dyadic rationals.
\item \label{zerodyadic}
$0 \cdot f \in \bZ[\frac{1}{2}]$.
\end{enumerate}
\end{definition}

In particular, $T(1)$ is the same as Thompson's group $T$. More generally, if $n$ is a power of 2 then $T(n)$ is
isomorphic to $T$.

The group $T(n)$ is in most ways similar to $T$. The lemma below is analogous to the fact that $T$ is generated by $F$ and dyadic translations.
\begin{lemma} \label{tgenerated}
For $n \in \bZ_+$, let $F(n)$ be the subgroup of $T(n)$ consisting of those elements of $T(n)$ which fix $0 = n \in C(n)$.
Let $S(n)$ be the subgroup of $T(n)$ consisting of rotations by dyadic rationals between 0 and $n$, that is, $s \in S(n)$
whenever $s \colon x \mapsto \phi_n(x + \a)$ for some $\a \in \bZ[\frac{1}{2}] \cap [0, n]$, where $\phi_n:\bZ\to C(n)\cong\bR/n\bZ$ is the canonical map. Then every $t \in T(n)$ can be written
as $t = sf$ with $s \in S(n)$ and $f \in F(n)$.
\end{lemma}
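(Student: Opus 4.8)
The plan is to mimic the classical argument that $T = F \cdot (\text{dyadic rotations})$, taking care that the only new feature, the parameter $n$ which need not be a power of $2$, does not obstruct the standard moves. Let $t \in T(n)$ be arbitrary. The key point is to produce $s \in S(n)$ so that $st^{-1}$ (or $t s^{-1}$, depending on side conventions — I will write everything with the paper's right-action convention, so I seek $s \in S(n)$ with $s^{-1}t \in F(n)$, i.e.\ $0 \cdot (s^{-1} t) = 0$) fixes the basepoint $0 = n \in C(n)$. By axiom (\ref{zerodyadic}) in the definition of $T(n)$, the point $p := 0 \cdot t$ is a dyadic rational in $[0,n]$. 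Hence the rotation $s \colon x \mapsto \phi_n(x + p)$ lies in $S(n)$ by definition, and $0 \cdot (t s^{-1})$... — more precisely, the element $f := t \circ s^{-1}$ where $s$ is rotation by $p$ satisfies $0 \cdot f = 0$, so $f \in F(n)$ provided $f$ genuinely lies in $T(n)$, and then $t = f s$ with $f \in F(n)$, $s \in S(n)$.

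The steps, in order, are as follows. First I would check that $S(n)$ is indeed a subgroup of $T(n)$: a dyadic rotation is a PL homeomorphism of $C(n)$ with a single breakpoint (at the preimage of the glued endpoint), its derivative is $1 = 2^0$ wherever defined, the breakpoint and its image are the dyadic points $n - \alpha$ and $0$, and $0 \cdot s = \alpha \in \bZ[\tfrac12]$, so all four axioms hold; closure under composition and inverse is immediate since the set of dyadic rotations is a group isomorphic to $\bZ[\tfrac12]/n\bZ$. Second, given $t \in T(n)$, set $p = 0 \cdot t \in \bZ[\tfrac12] \cap [0,n]$ (using axiom (\ref{zerodyadic})), let $s \in S(n)$ be rotation by $p$, and put $f = t s^{-1}$. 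Third, I would verify $f \in T(n)$: it is PL with finitely many breakpoints (breakpoints of $t$ together with $t^{-1}$-images of breakpoints of $s^{-1}$, all finite in number), its derivative is an integer power of $2$ wherever defined (product of such), at each breakpoint both the point and its image are dyadic (breakpoints of $t$ have dyadic image already; the one extra breakpoint coming from $s^{-1}$ sits at a dyadic point whose $f$-image is $0 \cdot t s^{-1}$-type data — here one checks dyadicity directly), and finally $0 \cdot f = (0 \cdot t)\cdot s^{-1} = p \cdot s^{-1} = 0 \in \bZ[\tfrac12]$, so axioms (1)–(4) all hold and moreover $f$ fixes $0$, hence $f \in F(n)$. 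Fourth, conclude $t = f s$, which is the desired decomposition.

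The main obstacle I expect is purely bookkeeping at the breakpoints: one must be sure that after composing $t$ with the rotation $s^{-1}$, every breakpoint of the resulting $f$ still has the property that \emph{both} it and its image are dyadic rationals (axiom (3)). This is where the parameter $n$ could in principle cause trouble — a rotation of $C(n)$ identifies $x$ with $x + p \pmod n$, and the "seam" breakpoint of $s^{-1}$ lands exactly at the preimage of the glued point, which is dyadic because $p$ is dyadic and $n$ is an integer; its $f$-image is then $n \cdot t$ read in $C(n)$, i.e.\ $0 \cdot t = p$, which is dyadic. So the seam is fine, and all other breakpoints of $f$ are breakpoints of $t$ shifted by nothing (the rotation is an isometry and moves breakpoints but preserves dyadicity of point and image). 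Once this is dispatched the lemma follows. I do not anticipate any essential difficulty beyond this verification; the argument is a direct adaptation of the $T = F\cdot S$ decomposition, with $n$ entering only through the trivial observation that dyadic rotations of $C(n)$ form a group and that axiom (\ref{zerodyadic}) supplies the needed dyadicity of $0 \cdot t$.
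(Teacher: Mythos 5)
Your proposal is correct and is essentially the paper's own proof, which is a one-liner: take $s$ to be the rotation by $0 \cdot t$ and let $f$ be the remaining factor, which then fixes the basepoint by construction. The verification that $S(n) \leq T(n)$ and the breakpoint/dyadicity bookkeeping you spell out (along with the harmless left/right composition convention deciding whether one writes $t = sf$ or $t = fs$) is exactly what the paper leaves implicit.
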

\begin{proof}
For $t \in T(n)$, it is enough to take $s$ to be the translation by $0 \cdot t$, and $f = s^{-1}t$.
\end{proof}

We will want to show that for a periodic labelling $\s$, the group $G_\s$ is a lift of $T(n)$ for some $n$. To that end, we will need a modified version of a
theorem proven by Hyde et al. (See Theorem 1.8 in \cite{uniformlyperfect}. Note that the statement of the theorem in \cite{uniformlyperfect} requires the labelling to be quasi-periodic,
as opposed to periodic which we have below.
Also note that condition (4) is absent in the original statement,
since for quasi-periodic labellings it follows from the other conditions.)

We recall the following definition from the Preliminaries.
\begin{definition}[Compare definition 1.6 in \cite{uniformlyperfect}] \label{Kset}
For any labelling $\s$, let $K_\s$ be the set of homeomorphisms $f \in \mathrm{Homeo}^+(\bR)$ satisfying
the following conditions.
\begin{enumerate}
\item [(1)]
$f$ is a piecewise linear homeomorphism of $\bR$ with a discrete set of breakpoints, all of which lie in $\bZ[\frac{1}{2}]$.
\item [(2)]
$f'(x)$, wherever it exists, is an integer power of 2.
\item [(3)]
There is a $k_f \in \bN$ such that:
\begin{enumerate}
\item [(3.a)]
whenever $x, y \in \bR$ satisfy
\[
x - y \in \bZ, \quad \cW_\s(x, k_f) = \cW_\s(y, k_f)
\]
we have
\[
x - x \cdot f = y - y \cdot f
\]
\item [(3.b)]
whenever $x, y \in \bR$ satisfy
\[
x - y \in \bZ, \quad \cW_\s(x, k_f) = \cW_\s^{-1}(y, k_f)
\]
we have
\[
x - x \cdot f = y' \cdot f - y', \quad \text{ where $y' = y \cdot \iota$}
\]
\end{enumerate}
\item [(4)]
$0 \cdot f \in \bZ[\frac{1}{2}]$
\end{enumerate}
\end{definition}
Note that in light of Lemma \ref{periodicsubwords} below, condition (3) could be greatly simplified for periodic labellings.
We choose to keep the original condition to emphasise the analogy with the quasi-periodic case.
\begin{theorem}[Compare Theorem 1.8 in \cite{uniformlyperfect}] \label{gbig}
If the labelling $\s$ is periodic, then $K_\s = G_\s$.
\end{theorem}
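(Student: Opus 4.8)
The plan is to establish the two inclusions $G_\s\subseteq K_\s$ and $K_\s\subseteq G_\s$ separately, following the proof of Theorem~1.8 in \cite{uniformlyperfect} and isolating the two points at which quasi-periodicity must be traded for periodicity together with condition~(4). Throughout let $p\in\bZ_+$ be the period of $\s$; this is automatically an integer, since $\s$ maps $\bZ$ into $\{a,a^{-1}\}$ and $\tfrac12\bZ\setminus\bZ$ into $\{b,b^{-1}\}$. For $G_\s\subseteq K_\s$ it suffices to check that each standard generator belongs to $K_\s$ and that $K_\s$ is closed under products and inverses. For the generators, (1) and~(2) are inherited from $\nu_i\in F$ through the isometric identifications of $[0,1]$ with unit (or half-shifted unit) intervals, which preserve both $\bZ[\tfrac12]$ and slopes; (3) holds with a small value of $k$, since the two defining clauses for $\zeta_i$ (resp.\ $\chi_i$), according to whether $\s(n+\tfrac12)=b$ or $b^{-1}$ (resp.\ $\s(n)=a$ or $a^{-1}$), are the content of clauses~(3.a) and~(3.b) of Definition~\ref{Kset}; and~(4) holds because every standard generator fixes $0$. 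For closure, (1) and~(2) are routine; (3) follows by a local argument in the spirit of Lemma~\ref{glocal}, the relevant finiteness being that any $f\in K_\s$ has bounded displacement because, by~(3.a), the function $x\mapsto x\cdot f-x$ restricted to a unit interval depends only on $\cW_\s(x,k_f)$, of which there are finitely many; and~(4) follows once one observes that (1),~(2) and~(4) together force any $f\in K_\s$ to carry $\bZ[\tfrac12]$ bijectively onto itself (between consecutive dyadic breakpoints $f$ is affine with dyadic slope, so dyadicity propagates from the seed value $0\cdot f$), whence $0\cdot(fg)=(0\cdot f)\cdot g$ and $0\cdot f^{-1}$ are dyadic.

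For the reverse inclusion $K_\s\subseteq G_\s$, fix $f\in K_\s$ with parameter $k_f$ as in~(3). Applying~(3.a) to pairs $x,\ x+p$, which carry the same $\s$-word by periodicity, shows that $f$ commutes with the translation $x\mapsto x+p$; combined with~(1),~(2),~(4) this exhibits $f$ as a dyadic piecewise-linear self-homeomorphism of $\bR/p\bZ$, a convenient feature absent in the quasi-periodic case. The argument of \cite{uniformlyperfect} then writes $f$ as a word in the standard generators by successive simplification: one multiplies $f$ on one side by elements of $G_\s$ so as to arrange first that $0\cdot f=0$, and then, using a generator-built element with the same germ at $0$, that the resulting element fixes a dyadic neighbourhood of $0$ pointwise; the residual element is supported, modulo the $p$-translation, in finitely many intervals $(n,n+1)$ or $(n-\tfrac12,n+\tfrac12)$, and it is fragmented into finitely many pieces each supported in a single such interval; each such piece is a conjugate of an element of $H'=F'$, hence lies in $\cK'$ or $\cL'$, which are subgroups of $G_\s$ by Lemma~\ref{lemmaH} and the definitions of $\lambda,\pi$. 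In this argument every appeal to the quasi-periodicity axiom~(1) (``a prescribed finite subword recurs'') is supplied here for free, indeed more strongly, by periodicity, and the same recurrence yields the localized $F'$-pieces of $G_\s$ one needs, built explicitly as words in the standard generators just as in \cite{HydeLodha}. The only place where quasi-periodicity axiom~(2) was invoked in \cite{uniformlyperfect} is in certifying that an element of the candidate set sends $0$ to a dyadic rational; here that is precisely hypothesis~(4), so that step goes through verbatim.

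I expect the main obstacle to be the first simplification above: realizing inside $G_\s$ an element whose action on $\bR/p\bZ$ agrees with that of $f$ near $0$, so that after multiplication the element fixes a neighbourhood of $0$. This requires knowing that $G_\s$ acts on the dyadic points of $\bR/p\bZ$ with sufficient flexibility --- a Thompson-$T$-type transitivity, obtained by a ping-pong argument among the $\zeta_i$ and $\chi_i$ together with the localized $F'$-pieces --- and it is exactly here that condition~(4) is indispensable: without it the homeomorphism of $\bR/p\bZ$ induced by $f$ could be a rotation by a non-dyadic amount, which no element of $G_\s$ can reproduce. Once this reduction is achieved, the remaining fragmentation of a neighbourhood-of-$0$-fixing element into $F'$-pieces is a finite, purely combinatorial bookkeeping step identical to the quasi-periodic case, so the proof reduces, modulo the two indicated substitutions, to that of Theorem~1.8 in \cite{uniformlyperfect}.
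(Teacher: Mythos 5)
Your overall architecture matches the paper's: prove the two inclusions separately, use Lemma \ref{glocal} for condition (3) in the easy direction, and in the hard direction use dyadic transitivity (condition (4) being exactly what makes this possible) to reduce to an element fixing a neighbourhood of $0$, then exploit periodicity to get a finiteness statement that lets the fragmentation machinery of \cite{uniformlyperfect} apply. The paper packages that last step as Lemma \ref{periodicuniformlystable} (stable $\Rightarrow$ uniformly stable when $\s$ is periodic) followed by Proposition \ref{prop34general}, and your observation that the residual element commutes with translation by the period is precisely the content of that lemma. So the route is essentially the same.

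Two places where your write-up falls short of a proof, though. First, for $G_\s\subseteq K_\s$ your verification of clause (3.b) for an arbitrary word $w(S_\s)$ is not supplied by ``a local argument in the spirit of Lemma \ref{glocal}'': that lemma only compares points whose surrounding labelling words are \emph{equal} under an integer translation, whereas (3.b) compares a point $x$ with a point $y$ whose word is the \emph{formal inverse} of that at $x$, and the conclusion involves the flip $y'=y\cdot\iota$. The paper supplies the missing device: introduce the reflected labelling $\t(\a)=\s(-\a)^{-1}$, note that $x\cdot s_j^\s=-\bigl((-x)\cdot s_j^\t\bigr)$ for each generator, and then apply Lemma \ref{glocal} to the pair $(\s,\t)$ at the points $x$ and $-y'$. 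Without something of this kind, (3.b) for products is unproved. Second, your description of the fragmentation is inaccurate: an element of $K_\s$ fixing a neighbourhood of $0$ need not decompose into pieces supported in single intervals $(n,n+1)$ or $(n-\tfrac12,n+\tfrac12)$, nor do the pieces lie in $\cK'$ or $\cL'$; the atoms can be integer-endpoint intervals of arbitrary length, and the correct decomposition is into the cellular pieces $f_\z$ indexed by equivalence classes of decorated atoms (finitely many, by periodicity), each of which is placed in $G_\s$ by the argument of Proposition 3.4 of \cite{uniformlyperfect} using the special elements of Proposition \ref{SpecialElements2}. Since you ultimately defer to that reference the conclusion stands, but as written this step misstates what is being fragmented.
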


The heavy lifting in the proof of Theorem \ref{gbig} is done by Proposition 3.4 in \cite{uniformlyperfect}. To state it, we need
the following definitions.

Below, we will use notation $\cW(J, n)$, where $n \in \bN$ and $J = [m_1, m_2] \subseteq \bR$ is a compact interval with integer endpoints,
to describe a word defined as
\begin{multline*}
\cW(J, n) = \rho \! \left(m_1 - \tfrac{1}{2}(n - 1) \right) \, \rho \! \left( m_1 - \tfrac{1}{2}(n - 2) \right) \cdots \\
\cdots \rho \! \left(m_2 + \tfrac{1}{2}(n - 2)\right) \, \rho \! \left(m_2 + \tfrac{1}{2}(n - 1) \right)
\end{multline*}

\begin{definition}[Definition 3.1 in \cite{uniformlyperfect}]
A homeomorphism $f \in \mathrm{Homeo}^+(\bR)$ is said to be {\em stable} if there exists an
$n \in \bN$ such that the following condition holds.
For any compact interval $I$ of length at least $n$, there is an integer $m \in I$ such that $f$
fixes a neighborhood of $m$ pointwise. Given a stable homeomorphism $f \in \mathrm{Homeo}^+(\bR)$
and an interval $[m_1, m_2]$, the restriction $\restr{f}{[m_1, m_2]}$ is said to be an {\em atom of $f$}, if:
\begin{enumerate}
\item
$m_1, m_2 \in \bZ$;
\item
there is an $\e > 0$ such that, for each $x \in (m_1 - \e, m_1 + \e) \cup (m_2 - \e, m_2 + \e)$,
we have $x \cdot f = x$;
\item
for any $m \in (m_1, m_2) \cap \bZ$ and any $\e > 0$, there is a point $x \in (m - \e, m + \e)$
such that $x \cdot f \neq x$.
\end{enumerate}
In other words, an atom is the restriction of $f$ to the closure of a maximal open interval $J$
with the property that for each $m \in J \cap \bZ$, $f$ moves a point in any neighborhood of $m$.

Given a stable homeomorphism $f$, there is a unique way to express $\bR$ as a union of integer endpoint intervals
$\{ I_\a \}_{\a \in P}$ such that $\restr{f}{I_\a}$ is an atom for each $\a \in P$ and different
intervals intersect in at most one endpoint. We will also refer to the intervals $I_\a$ as the atoms of $f$.

Two atoms $\restr{f}{[m_1, m_2]}$ and $\restr{f}{[m_3, m_4]}$ are said to be {\em conjugate} if there is an integer translation
$h(t) = t + z$ for $z=m_3-m_1 \in \bZ$ such that
\[
\restr{f}{[m_1, m_2]} = \restr{h^{-1} \circ f \circ h}{[m_3, m_4]}
\]
and {\em flip-conjugate} if there is an integer translation $h(t) = t + z$ for $z=m_3-m_1 \in \bZ$ such that
\[
\restr{f}{[m_1, m_2]} = \restr{h^{-1} \circ \left( \iota_{[m_1, m_2]} \circ f \circ \iota_{[m_1, m_2]} \right) \circ h}{[m_3, m_4]}
\]
where $\iota_{[m_1, m_2]} \colon [m_1, m_2] \to [m_1, m_2]$ is the unique orientation-reversing isometry.

A \emph{decorated atom for $f$} is simply a pair $(I_{\alpha},n)$ where $I_{\alpha}$ is an atom for $f$ and $n\in \mathbf{N}$.
For a fixed $n \in \bN$, we consider the set of {\em decorated atoms}:
\[
\cT_n(f) = \left\{ (I_\a, n) : \a \in P \right\}
\]
We say that a pair of decorated atoms $(I_\a, n)$ and $(I_\b, n)$ are equivalent if either of the following statements holds:
\begin{enumerate}
\item
$I_\a$, $I_\b$ are conjugate and $\cW(I_\a, n) = \cW(I_\b, n)$;
\item
$I_\a$, $I_\b$ are flip-conjugate and $\cW(I_\a, n) = \cW^{-1}(I_\b, n)$.
\end{enumerate}

The element $f$ is said to be {\em uniformly stable} if it is stable and there are finitely many equivalence classes
of decorated atoms for each $n \in \bN$.
\end{definition}

\begin{definition}[Definition 3.3 in \cite{uniformlyperfect}]
Let $f \in \mathrm{Homeo}^+(\bR)$ be uniformly stable. Let $\z$ be an equivalence class of elements in $\cT_n(f)$.
We define the homeomorphism $f_{\z}$ as
\begin{align*}
& \restr{f_\z}{I_\a} = \restr{f}{I_\a} \quad \text{ if $(I_\a, n) \in \z$}, \\
& \restr{f_\z}{I_\a} = \restr{\id}{I_\a} \quad \text{ if $(I_\a, n) \not\in \z$}
\end{align*}
If $\z_1, \ldots, \z_m$ are the equivalence classes of elements in $\cT_n(f)$, then the list of homeomorphisms
$f_{\z_1}, \ldots, f_{\z_m}$ is called the {\em cellular decomposition of $f$}.
\end{definition}

\begin{proposition} \label{prop34general}
Let $\s$ be any labelling. Given a uniformly stable element $f \in K_\s$,
there is an $n \in \bN$ such that $f_\z \in G_\s$ for each $\z \in \cT_n(f)$. In particular,
it follows that $f \in G_\s$.
\end{proposition}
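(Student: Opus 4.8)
The plan is to produce each cellular piece $f_\z$ as an element of $G_\s$. Since $f$ is uniformly stable, for a suitable $n$ there are only finitely many equivalence classes $\z_1,\dots,\z_m\in\cT_n(f)$; the homeomorphisms $f_{\z_1},\dots,f_{\z_m}$ have pairwise disjoint supports (distinct classes consist of distinct atoms of $f$, and atoms meet only at integer endpoints, which every $f_{\z_i}$ fixes), and $f=f_{\z_1}\cdots f_{\z_m}$. So it suffices to show $f_\z\in G_\s$ for each class $\z$, and the final assertion $f\in G_\s$ then follows.

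First I would fix $n$. As $f\in K_\s$, Definition \ref{Kset}(3) supplies a locality constant $k_f$; as $f$ is stable, there is a bound $L$ on the lengths of its atoms. Take $n$ large (say $n\ge k_f+2L+2$), so that $\cW(I,n)$ determines the labelling on an $L$-neighbourhood of $I$ with enough room on each side to apply the locality condition. With this choice one shows, using conditions (3.a) and (3.b) of Definition \ref{Kset}: whenever $I$ is an atom of $f$ and $J$ is an integer-endpoint interval with $|J|=|I|$ and $\cW(J,n)=\cW(I,n)$ (respectively $\cW(J,n)=\cW^{-1}(I,n)$), then $f$ on a neighbourhood of $J$ is forced to agree with the conjugate (respectively flip-conjugate) of $f$ near $I$; hence $J$ is itself an atom of $f$, conjugate (respectively flip-conjugate) to $I$, with the matching decoration. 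Together with the definition of the equivalence relation on $\cT_n(f)$, this identifies, after fixing one representative atom $I$ of $\z$, the class $\z$ with the set of decorated atoms $(J,n)$ for which $|J|=|I|$ and $\cW(J,n)\in\{\cW(I,n),\cW^{-1}(I,n)\}$; and it shows that $f_\z$ is the homeomorphism equal to the conjugate of $\restr fI$ on each such $J$ of the first type, to its flip-conjugate on each such $J$ of the second type, and to the identity elsewhere — that is, a ``pattern-local'' element in the sense of Proposition \ref{SpecialElements2}.

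It then remains to realise this pattern-local homeomorphism inside $G_\s$, which is the substance of the proposition and parallels the construction behind Proposition 3.4 of \cite{uniformlyperfect} (of which Proposition \ref{SpecialElements2} is the quasi-periodic packaging). I would decompose $\restr fI$ as a product of homeomorphisms of $I$, each supported in a short sub-configuration: a single unit subinterval, or — to accommodate the fact that $\restr fI$ in general sends the interior integers of $I$ to other dyadic points rather than fixing them — a pair of adjacent half-unit intervals centred at such an integer. For each piece one builds its pattern-local copy in $G_\s$ from the standard generators $S_\s$, using the copies $\cK'\cong\cL'\cong F'$ of $F'$ sitting inside $G_\s$ (which act, up to flips, on every unit interval, respectively every half-integer-centred interval), together with an iterated commutator that shrinks the support from ``all such intervals'' down to ``only those whose window word is $\cW(I,n)$''. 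The product of these copies is a candidate $g\in G_\s$, and one verifies, point by point along the lines of Definition \ref{Kset}, that $g$ has exactly the support and local behaviour described above, so $g=f_\z$.

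The main obstacle is this last step. In the quasi-periodic case Proposition \ref{SpecialElements2} is available off the shelf, but its proof runs through the identity $K_\rho=G_\rho$ (Theorem \ref{characterisation}); for a general labelling $\s$ — for instance a periodic one that is not quasi-periodic — we are precisely in the process of establishing $K_\s=G_\s$ (Theorem \ref{gbig}) and cannot invoke it, so the pattern-local elements must be constructed by hand. What makes this delicate is twofold: the standard generators move all (half-)unit intervals simultaneously, so isolating those whose window word equals $\cW(I,n)$ demands a careful commutator argument rather than a direct word in the generators; and since $\restr fI$ need not fix the interior integers of $I$, it does not split over the unit subintervals, which is why the half-integer-centred intervals supplied by $\cL$ must enter the decomposition as well. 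Once these building blocks are in hand, reassembling $f_\z$, and hence $f$, is routine.
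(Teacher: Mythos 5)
Your outline matches the intended argument: the paper's own ``proof'' of Proposition \ref{prop34general} is a one-line remark that the proof of Proposition 3.4 in \cite{uniformlyperfect} never uses quasi-periodicity and therefore applies verbatim to an arbitrary labelling. You have correctly reconstructed the skeleton of that proof --- the disjointness of the supports of the $f_{\z_i}$, the factorisation $f = f_{\z_1}\cdots f_{\z_m}$, the choice of $n$ in terms of $k_f$ and the atom-length bound so that the window word $\cW(I,n)$ together with conditions (3.a) and (3.b) pins down the equivalence class of a decorated atom --- and you have correctly spotted the one point that genuinely needs checking, namely that for a general labelling the construction cannot be routed through $K_\s = G_\s$ or Proposition \ref{SpecialElements2} without circularity.

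The gap is that you stop exactly where the content of the proposition lies. Everything up to the identification of $f_\z$ as a ``pattern-local'' homeomorphism is bookkeeping; the proposition is really the assertion that such a pattern-local homeomorphism is a word in the six standard generators. Your closing paragraphs describe what this realisation should look like (decompose $\restr{f}{I}$ over unit and half-unit subintervals, use the copies of $F'$ inside $\cK'$ and $\cL'$, shrink supports by iterated commutators keyed to the window word) and explain why it is delicate, but they do not carry it out: no element of $G_\s$ is actually produced, and no verification is given that the commutator scheme isolates precisely the atoms whose window word is $\cW(I,n)$ or its inverse. Naming the obstacle is not the same as overcoming it. To close the argument you must either execute that construction in full or do what the paper does --- cite the proof of Proposition 3.4 of \cite{uniformlyperfect} and check, line by line, that quasi-periodicity is never invoked in it; that verification is the entire mathematical content of the paper's proof, and it is absent from yours.
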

\begin{proof}
We refer the reader to the proof of Proposition 3.4 in \cite{uniformlyperfect}.
While the proposition is stated there for quasi-periodic labellings only,
the assumption that the labelling be quasi-periodic is not actually used and the same proof can be used word for word to prove Proposition \ref{prop34general}.
\end{proof}

\begin{lemma} \label{periodicuniformlystable}
If $\s$ is a periodic labelling, then every $f \in K_\s$ which is stable is also uniformly stable.
\end{lemma}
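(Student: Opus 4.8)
The plan is to use periodicity of $\s$ to bound the number of possible ``shapes'' of the atoms of $f$. Write $p \in \bZ_+$ for the period of $\s$, so $\s(\a + p) = \s(\a)$ for every $\a \in \tfrac12\bZ$; the period is forced to be a \emph{positive integer} because the labelling axioms require $\s$ to take values in $\{a, a^{-1}\}$ on $\bZ$ and in $\{b, b^{-1}\}$ on $\tfrac12\bZ \setminus \bZ$, so a half-integer shift could never be a period.

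The first step is to observe that the atoms of $f$ have uniformly bounded length. Since $f$ is stable, fix $N \in \bN$ such that every compact interval of length at least $N$ contains an integer $m$ with $f$ fixing a neighborhood of $m$. If an atom $[m_1, m_2]$ of $f$ satisfied $m_2 - m_1 \geq N + 2$, then the interval $[m_1 + 1, m_2 - 1]$ would contain such an integer $m \in (m_1, m_2)$, contradicting the third clause in the definition of an atom. Hence every atom of $f$ has length at most $N + 1$.

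The second step is to show that $f$ commutes with the translation $x \mapsto x + p$. Since $f \in K_\s$, choose $k_f \in \bN$ as in clause (3.a) of Definition \ref{Kset}. For any $x \in \bR$ we have $x - (x+p) = -p \in \bZ$, and periodicity of $\s$ gives $\cW_\s(x, k_f) = \cW_\s(x + p, k_f)$ (each of the $2k_f + 1$ letters of the second word is the corresponding letter of the first, shifted by a full period $p$). Clause (3.a) therefore yields $x - x\cdot f = (x + p) - (x+p)\cdot f$, i.e.\ $f(x + p) = f(x) + p$ for all $x$. Consequently, for any atom $I = [m_1, m_2]$ of $f$ and any $j \in \bZ$, the interval $I + jp$ is again an atom of $f$, it is conjugate to $I$ in the sense defined above for atoms (the witnessing integer translation is $t \mapsto t + jp$), and $\cW(I + jp, n) = \cW(I, n)$ for every $n$ by periodicity.

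The third step finishes the argument. Assign to each atom $I = [m_1, m_2]$ of $f$ the pair $(m_1 \bmod p,\ m_2 - m_1) \in \bZ/p\bZ \times \{1, \ldots, N+1\}$. By the second step, any two atoms with the same associated pair differ by an integer multiple of $p$, hence are conjugate and carry identical words $\cW(\cdot, n)$ for every $n$; thus the corresponding decorated atoms $(I_\a, n)$ and $(I_\b, n)$ are equivalent, for every $n$. Therefore, for each fixed $n$, the number of equivalence classes of decorated atoms of $f$ is at most $p(N+1) < \i$. Combined with stability, this is exactly the statement that $f$ is uniformly stable. The only points requiring a little care are the bound on atom lengths (immediate from the stability parameter) and the verification $\cW_\s(x, k_f) = \cW_\s(x+p, k_f)$ from periodicity; the essential input is clause (3.a) of membership in $K_\s$, which is what converts periodicity of the labelling into the relation $f(x+p) = f(x) + p$.
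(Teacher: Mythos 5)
Your proof is correct and follows essentially the same route as the paper: periodicity of $\s$ plus clause (3.a) of Definition \ref{Kset} forces atoms that differ by a translation by the period to be conjugate and to carry the same words $\cW(\cdot,n)$, which leaves only finitely many equivalence classes of decorated atoms. The paper gets the finiteness by translating each atom so its left endpoint lands in a fixed window of one period's width, while you classify atoms by (left endpoint mod $p$, length) after bounding atom lengths via the stability constant; these are the same argument in slightly different packaging, with yours spelling out the intermediate relation $(x+p)\cdot f = x\cdot f + p$ explicitly.
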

\begin{proof}
Let $\s$ be a periodic labelling with period $2k$. Suppose that $f \in K_\s$ is stable.
Let $J = [m_1, m_2]$ be any atom of $f$. Define $\ell \in \bZ$ to be the largest multiple of $k$ which is no greater than $m_1$.
Clearly $\ell - m_1 \in \{0, \ldots, k - 1\}$. Let $K = [m_1 - \ell, m_2 - \ell]$.
Observe that $\cW(J, n) = \cW(K, n)$ for any $n \in \bN$.
The requirement (3.a) in the definition of $K_\s$ then guarantees 
that $f\restriction J=f\restriction K$. 
Since there are only finitely many
atoms whose left endpoint lies in $\{0, \ldots, k - 1\}$, we conclude that $f$ is uniformly stable.
\end{proof}

For the subsequent proofs, recall the subgroups
\[
\cK = \< \zeta_1, \zeta_2, \zeta_3 \>, \quad \cL = \< \chi_1, \chi_2, \chi_3 \>
\]
of $G_\rho$, which are both isomorphic to $H$. The isomorphisms are denoted by $\lambda \colon H \to \cK$ and
$\pi \colon H \to \cL$.
Also, recall that
$\cK' \cong \cL' \cong H' \cong F'$. We will make use of the subgroups $\pi(H'),\lambda(H')$ which will be denoted by $\pi(F'),\lambda(F')$.

\begin{lemma} \label{dyadictozero}
Let $\s$ be any labelling and let $r$ be any dyadic rational. Then there exists some $g \in G_\s$ such that $r \cdot g = 0$. In particular, the action of $G_{\s}$ on $\mathbf{Z}[\frac{1}{2}]$ is transitive.
\end{lemma}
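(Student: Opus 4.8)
The plan is to reduce the problem to two easy moves: first translate $r$ to an integer, then translate that integer to $0$. For the first move, note that the generators $\zeta_i$ all move points within unit intervals $[n,n+1]$ and, by construction, the restriction $\restr{\zeta_i}{[n,n+1]}$ is topologically conjugate (via the linear isometry) to $\nu_i$ or to $\iota\circ\nu_i\circ\iota$. Since $\nu_1,\nu_2,\nu_3$ generate $H$ and $H'=F'$ contains elements realizing any prescribed behavior on a compact subinterval of $(0,1)$ — in particular, elements whose restriction to $(0,\tfrac{1}{16})$ is the map $t\mapsto 2t$, coming from the chosen $c_0$ — one can find an element of $\cK=\langle\zeta_1,\zeta_2,\zeta_3\rangle$ whose restriction to the particular unit interval containing $r$ realizes (the conjugate of) a map sending $r$'s image to a dyadic rational very close to $0$ or $1$, and iterating this dyadic-doubling behavior pushes $r$ out to the integer endpoint. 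Concretely: if $r\in(n,n+1)$, write $r=n+t$ with $t\in(0,1)\cap\bZ[\tfrac12]$; by choosing an element of $H$ (built from $\nu_1$ and elements of $F'$) acting as a sufficiently high power of $t\mapsto 2t$ near $0$, or its mirror near $1$, we move the image of $t$ to $0$ (equivalently move $r$ to $n$), because after finitely many doublings any dyadic in $(0,1)$ lands at an endpoint. This is exactly the kind of ``special element'' guaranteed by Proposition \ref{SpecialElements2} once we arrange the right local data, so the element genuinely lies in $G_\s$.

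For the second move, it suffices to realize an integer translation by $1$ (hence by any integer) inside $G_\s$. Here I would use the element $\zeta_1=\zeta$ together with $\chi_1$-type generators, or more directly invoke the characterization $K_\s=G_\s$ (Theorem \ref{gbig} / Theorem \ref{characterisation}): the integer translation $x\mapsto x+1$ need not itself lie in $G_\s$, but we only need to move a \emph{single} integer. So instead I would again apply Proposition \ref{SpecialElements2} (or directly exhibit an element of $K_\s$) with $I$ a unit interval $[n,n+1]$ and $f\in F'$ chosen so that the resulting $g\in G_\s$ acts on that interval, combined with the mirror interval, as a homeomorphism carrying $n$ into the interior and eventually to any target integer within reach; chaining such moves along consecutive integers (using quasi-periodicity/periodicity of $\s$ to guarantee the needed matching subwords $\cW_\s(J,n)=\cW_\s(I,n)$ exist for infinitely many $J$) transports $n$ to $0$. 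Composing the two moves gives $g\in G_\s$ with $r\cdot g=0$, and transitivity of the $G_\s$-action on $\bZ[\tfrac12]$ is then immediate since $0$ can be sent anywhere by $g^{-1}$-type elements.

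The main obstacle I anticipate is the second move: while moving a dyadic to the nearest integer is clean (it is a purely local operation within one unit interval, using the built-in doubling map in $c_0$), sliding an integer to a far-away integer requires stitching together local elements across many unit intervals, and one must check that the combinatorial constraints (3.a)–(3.b) in Definition \ref{Krho}/\ref{Kset} are respected throughout — i.e., that the word data $\cW_\s$ along the path can be matched up consistently. For a \emph{periodic} labelling this is automatic (translation by the period is available, so one shifts by multiples of the period and then handles a bounded residual), and for a quasi-periodic labelling one leans on axiom (1) of quasi-periodicity to find matching blocks arbitrarily far out, exactly as in Lemma \ref{subwords}. In either case the element produced lies in $K_\s=G_\s$, completing the proof.
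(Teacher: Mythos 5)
There is a genuine gap in your first move, and it is not a technicality. You propose to push $r\in(n,n+1)$ to the integer endpoint $n$ by iterating the doubling behaviour of $c_0$ inside the single unit interval $[n,n+1]$. But every element you can build this way (any element of $\cK$, or more generally any homeomorphism preserving $[n,n+1]$) fixes the endpoints $n$ and $n+1$; the orbit of an interior point under a homeomorphism of a closed interval fixing its endpoints never reaches an endpoint. The claim that ``after finitely many doublings any dyadic in $(0,1)$ lands at an endpoint'' confuses the arithmetic of dyadics with the dynamics: $0$ is a fixed point of $c_0$, so no power of it carries $t\in(0,1)$ to $0$. To land \emph{exactly} on the target you must work in a chart where the target is an \emph{interior} point and invoke the transitivity of $F'$ on the dyadics of the open interval. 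This is precisely what the paper does: it first shows the $G_\s$-action on $\bR$ is minimal (an induction using that $\lambda(F')$ acts minimally on each $(n,n+1)$ and $\pi(F')$ on each $(n-\tfrac12,n+\tfrac12)$, and these intervals overlap), moves $r$ into $(0,\tfrac12)$, and then finishes with one element of $\pi(F')$ acting on $(-\tfrac12,\tfrac12)$, using that $F'$ is transitive on interior dyadics and that $0$ is interior to that interval.

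A second, independent problem is your reliance on Proposition \ref{SpecialElements2} and on the characterizations $K_\s=G_\s$ (Theorems \ref{characterisation} and \ref{gbig}). The lemma is stated for an \emph{arbitrary} labelling $\s$, while those results are only available for quasi-periodic (respectively periodic) labellings, so your argument would not prove the statement in the generality in which it is used (it is invoked in the proof of Theorem \ref{gbig} itself, so assuming $K_\s=G_\s$ here would also be circular in the periodic case). The paper's proof avoids all of this: it manufactures the required element purely from the generators $\lambda(F')$ and $\pi(F')$, which lie in $G_\s$ by definition, so no word-matching conditions of type (3.a)--(3.b) ever need to be verified. Your worry about ``stitching local elements across many unit intervals'' dissolves once you see that the stitching is just an alternating product of elements of $\cK$ and $\cL$.
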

\begin{proof}
Since the action of $F'$ on $(0,1)$ is minimal, $\lambda(F')$ acts minimally on $(n,n+1)$ for each $n\in \mathbf{Z}$. The same holds for $\pi(F')$ in $G_{\s}$ which acts minimally on $(n,n+1)$ for each $n\in \mathbf{Z}[\frac{1}{2}]\setminus \mathbf{Z}$.
It follows from a straightforward inductive argument that the action of $G_{\s}$ is minimal on $\mathbf{R}$.
So we can find $f\in G_{\s}$ such that $r\cdot f \in (0,\frac{1}{2})$. Since the dyadics are invariant under the action of elements in $G_{\s}$ and the action of $\pi(F')$ in $G_{\s}$ on $\mathbf{Z}[\frac{1}{2}]\cap (-\frac{1}{2},\frac{1}{2})$ is transitive,
we can find $h\in \pi(F')$ such that $r\cdot fh=0$. It follows that $0,r$ lie in the same $G_{\s}$-orbit.
\end{proof}

\begin{proof}[Proof of Theorem \ref{gbig}]
To show that $G_\s \subseteq K_\s$, we need to establish that every element of $G_\s$ satisfies conditions (1) \ldots (4) from Definition \ref{Kset}.
We can consult the definition of $G_\s$ to observe that all its generators are piecewise linear homeomorphisms satisfying (1) and (2),
and arbitrary products of such elements will again be piecewise linear homeomorphisms satisfying (1) and (2).
Condition (4) follows from the fact that the action of any generator of $G_\s$ on any real number $x$ is the restriction of
a mapping $x \mapsto a x + b$ with $a, b$ some dyadic rationals.

To verify condition (3), consider arbitrary $f \in G_\s$ and $x, y \in \bR$.
If we denote the standard list of generators of $G_\s$ by $S_\s = (s^\s_1, \ldots, s^\s_6)$, then
$f = w(S_\s)$, where $w(S_\s)$ stands for some formal word on elements of $S_\s$ and their inverses, as in the statement of Lemma \ref{glocal}.
Let $n$ be the length of the word $w$.
We can then deduce (3.a) with $k_f = 2n$ from Lemma \ref{glocal}, using $\s$ for both labellings that appear in the lemma.

For (3.b), define the labelling $\t$ by $\t \colon \frac{1}{2} \bZ \to \{ a, b, a^{-1}, b^{-1}\} \colon \a \mapsto \s(-\a)^{-1}$.
Let $S_\t = (s_1^\t, \ldots, s_6^\t)$ be the standard generators of $G_\t$.
Directly from the definition, for any $x \in \bR$ we have
$x \cdot s_j^\s = - \big( (-x) \cdot s_j^\t \big)$, with j = 1, \ldots, 6, and
consequently $x \cdot w(S_\s) = - \big( (-x) \cdot w(S_\t) \big)$.
Let $k_f = 2n$ as before. Recall that $y' = y \cdot \iota$, then observe that $\cW_\t(-y', k_f) = \cW_\s^{-1}(y, k_f)$.
So from $\cW_\s(x, k_f) = \cW_\s^{-1}(y, k_f)$ we can infer $\cW_\t(-y', k_f) = \cW_\s(x, k_f)$, and then use Lemma \ref{glocal}
again to get $x \cdot w(S_\s) - x = (-y') \cdot w(S_\t) - (-y')$. Since $f = w(S_\s)$ and
$y' \cdot w(S_\s) = - \big( (-y') \cdot w(S_\t) \big)$, we can conclude that
$x \cdot f - x = y' - y' \cdot f$ as needed.

Next, we'll show that $K_\s \subseteq G_\s$.
Let $f \in K_\s$ be arbitrary. Let $r = 0 \cdot f$. We know $r$ is a dyadic rational, so by Lemma \ref{dyadictozero} we can find $g \in G_\s$
for which $r \cdot g = 0$. We can find an element $h \in F'$ such that $\pi(h)$ coincides with $fg$ on some neighborhood of 0.
Then $fg \big( \pi(h) \big)^{-1}$ fixes some neighborhood of 0 pointwise, so it is stable, and hence uniformly stable by
Lemma \ref{periodicuniformlystable}. By Proposition \ref{prop34general} we conclude that $fg \big( \pi(h) \big)^{-1} \in G_\s$.
Since $g \in G_\s$ and $\pi(h) \in G_\s$, it follows that $f \in G_\s$ as desired.
\end{proof}

\begin{lemma} \label{periodicsubwords}
Suppose $\s$ is a periodic labelling with period $n$. Let $k \in \bZ$ be arbitrary.
Let $X = \{ \frac{1}{2} \ell, \frac{1}{2}(\ell + 1), \ldots, \frac{1}{2}(\ell + m) \}$
be a block of size at least $2n$, and let
$Y = \{ \frac{1}{2} \ell + k, \frac{1}{2}(\ell + 1) + k, \ldots, \frac{1}{2}(\ell + m) + k \}$
be another block, formed by adding $k$ to every element of $X$.
Then
\begin{enumerate}
\item
If $n \nmid k$ then $W_\s(X) \neq W_\s(Y)$.
\item
$W_\s(X) \neq W_\s^{-1}(Y)$.
\end{enumerate}
\end{lemma}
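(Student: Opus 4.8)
My plan is to extract a single elementary observation and then dispatch (1) and (2) with it. First I would record that a period of a labelling must send $\bZ$ to $\bZ$ and $\tfrac{1}{2}\bZ\setminus\bZ$ to itself, so it is a positive integer, and take $n$ to be the minimal one. The observation to isolate is: $\tfrac{1}{2}\bZ/n\bZ$ has exactly $2n$ elements, so any $2n$ consecutive elements of $\tfrac{1}{2}\bZ$ represent all residues modulo $n$; hence two $n$-periodic functions on $\tfrac{1}{2}\bZ$ that agree on a block of size at least $2n$ agree everywhere. I would also note up front that $Y=X+k$ is again a block and that $X$, having at least $2n\ge 2$ consecutive elements of $\tfrac{1}{2}\bZ$ (which alternate between integers and half-integers), contains at least one integer.

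For (1) I would argue the contrapositive. If $W_\s(X)=W_\s(Y)$, matching the two words letter by letter gives $\s(t)=\s(t+k)$ for every $t\in X$; since each side is $n$-periodic in $t$ and $X$ has size at least $2n$, this holds for all $t\in\tfrac{1}{2}\bZ$, i.e.\ $k$ is a period of $\s$. Writing $k=qn+r$ with $0\le r<n$, the remainder $r$ is also a period, so $r=0$ by minimality and $n\mid k$. This part is entirely routine.

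For (2) I would write $X=\{x_0<x_1<\dots<x_m\}$ with $x_j=x_0+\tfrac{j}{2}$, and unwind the formal inverse: $W_\s^{-1}(Y)$ reads $Y$ from right to left with each letter inverted, so $W_\s(X)=W_\s^{-1}(Y)$ says $\s(x_j)=\s(x_{m-j}+k)^{-1}$ for $0\le j\le m$. Using $x_{m-j}=x_0+x_m-x_j$ and setting $d:=x_0+x_m+k$, this is exactly the relation $\s(t)=\s(d-t)^{-1}$ for all $t\in X$. Next I would do a small parity check: evaluating at an integer $t_0\in X$ forces $\s(d-t_0)\in\{a,a^{-1}\}$, hence $d-t_0\in\bZ$, hence $d\in\bZ$. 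Since $\s(d-t)^{-1}$ is again $n$-periodic in $t$, the observation above propagates the relation $\s(t)=\s(d-t)^{-1}$ from the block $X$ to all of $\tfrac{1}{2}\bZ$. Finally, because $d\in\bZ$ the midpoint $\tfrac{d}{2}$ lies in $\tfrac{1}{2}\bZ$, and substituting $t=\tfrac{d}{2}$ gives $\s(\tfrac{d}{2})=\s(\tfrac{d}{2})^{-1}$, which is impossible since none of $a,a^{-1},b,b^{-1}$ is its own inverse; this contradiction proves (2).

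The hard part is (2): the crucial insight is to see the hypothesized word equation as the statement that $\s$ is invariant under the involution $t\mapsto d-t$ composed with letter inversion, to notice that this involution fixes $\tfrac{d}{2}$, and to conclude via the fact that a letter cannot equal its inverse. The one wrinkle I would be careful about is that one must first prove $d\in\bZ$ (using that $X$ contains an integer) before this midpoint even lives in $\tfrac{1}{2}\bZ$ and before the periodicity argument applies; and it is precisely the hypothesis $|X|\ge 2n$ that makes the propagation step go through in both (1) and (2).
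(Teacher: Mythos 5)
Your proof is correct. Part (1) is essentially identical to the paper's: both match the words letter by letter, use the fact that a block of size $2n$ meets every residue class of $\tfrac{1}{2}\bZ$ modulo $n$ to upgrade the pointwise agreement to $\s(t)=\s(t+k)$ for all $t$, and then invoke minimality of the period. For part (2) your route is a genuine (and in my view cleaner) repackaging: the paper first shows the words have odd length by comparing the types ($a^{\pm1}$ versus $b^{\pm1}$) of first and last letters, then overlays $X$ with a translated copy of $Y$ starting at $\tfrac{1}{2}\ell+k'$ where $k'\equiv k \pmod n$, observes that the overlap word $\cW_*$ is its own formal inverse and of odd length, and derives the contradiction from its middle letter. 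You instead rewrite the hypothesis as the functional equation $\s(t)=\s(d-t)^{-1}$ on $X$, propagate it to all of $\tfrac{1}{2}\bZ$ by the same residue-class observation, and evaluate at the fixed point $d/2$ of the involution $t\mapsto d-t$. Your step ``$d\in\bZ$'' (proved by evaluating at an integer of $X$) is exactly equivalent to the paper's odd-length parity check, and your final contradiction at $d/2$ plays the role of the paper's middle letter; but your version avoids introducing the auxiliary shifted copy of $W_\s(Y)$ altogether and makes the underlying symmetry explicit. Both arguments rest on the same two facts: periodicity propagates relations off a block of size $2n$, and no letter of the alphabet is its own inverse.
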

\begin{proof}
Since $X$ has size at least $2n$, for any $j \in \frac{1}{2} \bZ$ we can find an element of $j' \in X$
such that $n \mid (j - j')$.
Then $\s(j) = \s(j')$ and $\s(j' + k) = \s(j + k)$.
So whenever $W_\s(X) = W_\s(Y)$, we actually have $\s(j) = \s(j + k)$ for every $j \in \frac{1}{2} \bZ$.
If $n \nmid k$ then there exists some $0 < k' < n$ such that $n \mid (k - k')$. But then
$\s(j + k) = \s(j + k')$, so $W_\s(X) = W_\s(Y)$ would lead to $\s(j) = \s(j + k')$ for all $j$,
which is impossible because $k'$ is less than the period of $\s$. That proves part 1.

For part 2, suppose to the contrary, that $W_\s(X) = W_\s^{-1}(Y)$. Since $k \in \bZ$, either both $\frac{1}{2}\ell$ and $\frac{1}{2}\ell + k$
are in $\bZ$, or both are in $\frac{1}{2}\bZ \setminus \bZ$, so either both $W_\s(X)$ and $W_\s(Y)$ begin with $a^{\pm 1}$, or they both begin
with $b^{\pm 1}$. Then they must also both end with $a^{\pm 1}$ in the former case or $b^{\pm 1}$ in the latter,
since otherwise they couldn't be inverses of one another. It follows that the words $W_\s(X)$ and $W_\s(Y)$ are of odd size.

Next, we can find $k' \in \bZ$ such that $n \nmid (k - k')$ and $k' \in \{ 0, 1, \ldots, n - 1 \}$.
Since $\s$ is periodic, it contains a subword which is a copy of $W_\s(Y)$ starting at $\frac{1}{2}\ell + k'$.
Recall that $m$, the size of both $X$ and $Y$, is at least $2n$.
So the word $\cW_* = \s(\frac{1}{2}\ell + k') \s(\frac{1}{2}(\ell + 1) + k') \cdots \s(\frac{1}{2}(\ell + m))$
is both a prefix of $W_\s(Y)$ and a suffix of $W_\s(X)$, hence it must be its own inverse.
Notice $\cW_*$ is also of odd size---its first letter is the first letter of
$W_\s(Y)$ and its last letter is the last letter of $W_\s(X)$, so they have to either both be $a^{\pm 1}$ or $b^{\pm 1}$.
But that means $\cW_*$ cannot be its own inverse, since the letter in the exact middle of $\cW_*$ would have to be its own inverse, and that's not possible.
That contradiction concludes the proof of part 2.

\end{proof}

\begin{lemma} \label{lifting}
Let $\s$ be a periodic labelling, and $G_\s$ the corresponding subgroup of $\mathrm{Homeo}^+(\bR)$.
Then for some $n \in \bZ_+$ there exists a short exact sequence
\[
1 \to \bZ \to G_\s \to T(n) \to 1
\]
\end{lemma}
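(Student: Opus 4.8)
The plan is to build the short exact sequence from an explicit ``reduction modulo $n$'' homomorphism. Fix $n \in \bZ_+$ to be a period of $\s$; note that any period is necessarily an integer, since it must preserve the partition of $\tfrac12\bZ$ into $\bZ$ and $\tfrac12\bZ \setminus \bZ$. Because each standard generator of $G_\s$ is $n$-periodic --- that is, it commutes with the translation $\t_n \colon x \mapsto x + n$ --- every $g \in G_\s$ commutes with $\t_n$ and therefore descends to a homeomorphism $\bar g$ of $C(n) = \bR / n\bZ$. Using Theorem \ref{gbig} (or directly from the definition of $G_\s$), $\bar g$ is piecewise linear with finitely many breakpoints, all breakpoints and their images are dyadic, every slope is an integer power of $2$, and $0 \cdot \bar g \in \bZ[\tfrac12]$; hence $\bar g \in T(n)$, and $\Phi \colon G_\s \to T(n)$, $g \mapsto \bar g$, is a homomorphism.

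Next I would compute the kernel. If $\bar g = \id$ then $x \cdot g - x \in n\bZ$ for all $x$; this function is continuous and $n\bZ$ is discrete, so $x \cdot g - x$ is a constant $nk$ and $g = \t_{nk}$. Conversely, every $\t_{nk}$ lies in $G_\s$: by Theorem \ref{gbig} it suffices to see $\t_n \in K_\s$, and in fact $\t_\a \in K_\s$ for every dyadic $\a$, because conditions (1), (2), (4) of Definition \ref{Kset} are immediate and, choosing $k_{\t_\a}$ large enough that $\cW_\s(\,\cdot\,, k_{\t_\a})$ spans a block of at least $2n$ letters, condition (3.a) holds since both sides of the required identity equal $-\a$, while the hypothesis of (3.b) is never satisfied by the second part of Lemma \ref{periodicsubwords}. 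Therefore $\ker \Phi = \{\t_{nk} : k \in \bZ\} = \<\t_n\> \cong \bZ$ (and, incidentally, this $\bZ$ is central in $G_\s$).

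The remaining and central task is surjectivity of $\Phi$, for which I would invoke Lemma \ref{tgenerated} and lift the two generating families of $T(n)$ separately. A dyadic rotation $s \in S(n)$, $s \colon x \mapsto \phi_n(x + \a)$ with $\a \in \bZ[\tfrac12] \cap [0, n]$, is the image under $\Phi$ of the translation $\t_\a \in G_\s$ identified above. An element $f \in F(n)$ fixes the glued point $0 = n$, so it lifts to a unique homeomorphism $\tilde f \colon [0, n] \to [0, n]$ fixing the endpoints; let $g$ be the $n$-periodic extension of $\tilde f$ to $\bR$. Then $g$ is piecewise linear with a discrete set of dyadic breakpoints, has power-of-$2$ slopes, satisfies $0 \cdot g = 0 \in \bZ[\tfrac12]$, and --- taking $k_g$ so that $\cW_\s(\,\cdot\,, k_g)$ spans at least $2n$ letters --- meets condition (3.a) by the first part of Lemma \ref{periodicsubwords} (equality of these words for $x, y$ with $x - y \in \bZ$ forces $n \mid (x - y)$, whence $x - x \cdot g = y - y \cdot g$ by $n$-periodicity of $g$) and meets condition (3.b) vacuously by the second part of Lemma \ref{periodicsubwords}. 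Hence $g \in K_\s = G_\s$ and $\bar g = f$. By Lemma \ref{tgenerated} the images of the $\t_\a$ and of the $g$'s generate $T(n)$, so $\Phi$ is onto, and combining with the kernel computation produces the short exact sequence $1 \to \bZ \to G_\s \to T(n) \to 1$.

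I expect the main obstacle to be this surjectivity step, and within it the verification that the explicit lifts --- translations and periodic extensions --- genuinely satisfy the axioms defining $K_\s$. This is precisely where periodicity of $\s$ is used, through Lemma \ref{periodicsubwords}: sufficiently long subwords of $\s$ both determine the position modulo $n$ and are never formal inverses of integer translates of themselves, which is exactly what is needed to discharge conditions (3.a) and (3.b) for the candidate lifts.
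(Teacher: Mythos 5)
Your proof is correct and follows essentially the same route as the paper: quotient by the central subgroup of translations by multiples of the period, decompose $T(n)$ via Lemma \ref{tgenerated}, and verify that the translation and periodic lifts lie in $K_\s = G_\s$ using both parts of Lemma \ref{periodicsubwords}. If anything, you are slightly more explicit than the paper in two places — the identification of the kernel via continuity of the displacement function, and the observation that condition (3.b) for dyadic translations is only satisfied because its hypothesis is vacuous (by part 2 of Lemma \ref{periodicsubwords}), a point the paper passes over as ``trivial.''
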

\begin{proof}

Let $n$ be the period of $\s$. Let $F(n), S(n) < T(n)$ be as in the statement of Lemma \ref{tgenerated}.
Next, define $\widetilde{F}(n) < \mathrm{Homeo}^+(\bR)$ to be the group of homeomorphisms $f : \bR \to \bR$, which satisfy
\begin{enumerate}
\item
$f$ is a piecewise linear homeomorphism of $\bR$ with a discrete set of breakpoints, all of which lie in $\bZ[\frac{1}{2}]$.
\item
$f'(x)$, wherever it exists, is an integer power of 2.
\item
For all $x, y \in \bR$, if $x - y = kn$ for some $k \in \bZ$, then $x \cdot f - x = y \cdot f - y$.
\item
$0 \cdot f = 0$.
\end{enumerate}
Notice $\widetilde{F}(n)$ is isomorphic to $F(n)$.
Lastly, define $\widetilde{S} < \mathrm{Homeo}^+(\bR)$ to be the group of all translations of $\bR$ by dyadic rationals.

First we show that $G_\s \leq \< \widetilde{F}(n), \widetilde{S} \>$.
Observe that for any $x, y \in \bR$, if $x - y = kn$ for some $k \in \bZ$, then
for all $g \in G_\s$ we have $x \cdot g - x = y \cdot g - y$. (This can be seen directly from the definition of $G_\s$,
or as a consequence of Lemma \ref{glocal} by taking $\t = \s$ and $w$ such that $g = w(S_\s)$.)
It follows that $g = fs$, where $s \in \widetilde{S}$ is a translation $s \colon x \mapsto x + 0 \cdot g$,
and $f$ is some element of $\widetilde{F}(n)$. Hence $G_\s \leq \< \widetilde{F}(n), \widetilde{S} \>$.

We will show that $\< \widetilde{F}(n), \widetilde{S} \>\leq G_{\s}$, and conclude that $G_\s = \< \widetilde{F}(n), \widetilde{S} \>$.
Since dyadic translations trivially satisfy the requirements of Theorem~\ref{gbig}, we immediately have $\widetilde{S} < G_\s$.
We need to show that $\widetilde{F}(n) < G_\s$ as well. All elements of $\widetilde{F}(n)$ satisfy conditions
(1), (2), and (4) from Theorem~\ref{gbig} directly from the definition of $\widetilde{F}(n)$.
We claim that to satisfy condition (3), it's enough to take $k_f = n$ for any $f$.
Take any $x, y \in \bR$ such that $x - y \in \bZ$. Since $\cW_\s(x, k_f)$ and $\cW_\s(y, k_f)$ have length $2n + 1$,
part 2 of Lemma \ref{periodicsubwords} tells us that $\cW_\s(x, k_f)$ is never equal to $\cW_\s^{-1}(y, k_f)$, so condition
(3.b) is satisfied by the virtue of its hypothesis always being false.
For condition (3.a), we have from part 1 of Lemma \ref{periodicsubwords}
that $\cW_\s(x, k_f)$ can equal $\cW_\s(y, k_f)$ only if $x - y$ is a multiple of $n$. But then $x - x \cdot f = y - y \cdot f$ holds for all $f \in \widetilde{F}(n)$
by the definition of $\widetilde{F}(n)$.

Having established that $G_\s = \< \widetilde{F}(n), \widetilde{S} \>$, we can now define the homomorphisms that make up
the desired short exact sequence. 
Let $\ph \colon \bZ \to G_\s$ map $k$ to a translation by $kn$.
Note that $\ph(\bZ)$ is a central (and hence normal) subgroup in $G_{\s}$, and hence induces a natural quotient
$\psi \colon G_\s \to \textup{Homeo}^+(C(n))$. 


We claim that $\psi (G_\s)= T(n)$.
It is clear that $\ph$ is injective, $\ph(\bZ) = \ker(\psi)$, and that $\psi (G_\s)\leq T(n)$ by definition. To see that $\psi$ is surjective,
take any $t \in T(n)$. By Lemma \ref{tgenerated}, we can write $t = sf$ for some $f \in F(n)$ and $s \in S(n)$.
Let $\widetilde{f} \in \widetilde{F}(n)$ be the unique element of $\widetilde{F}(n)$ which agrees with $f$ on $[0, n)$,
and let $\widetilde{s} \in \widetilde{S}$ be a lift of the dyadic translation $s$. 
Since $G_\s = \< \widetilde{F}(n), \widetilde{S} \>$, we can define
$g \in G_\s$ by $g = \widetilde{s} \widetilde{f}$. Then one verifies that $\psi (g)=t$.
\end{proof}

\section{Proof that $G_\rho$ does not have Kazhdan's property (T)}

The method of the proof is the same as in the preceding section. The key fact we will need is a theorem of Yehuda Shalom (see Theorem 6.7 in \cite{shalom2000rigidity}).
\begin{theorem}[Shalom] \label{kazhdanopen}
The subset of groups with Kazhdan's property (T) in the space of marked groups $\cG_m$ is open.
\end{theorem}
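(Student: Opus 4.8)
The plan is to use the fact that property (T) of a finitely generated group is \emph{finitely certifiable}, so that it cannot be destroyed by modifying the group only in its ``long'' relations. Write $S=\{s_1,\dots,s_m\}$ and consider the self-adjoint element
\[
\Delta \;=\; \sum_{s\in S}(1-s)^*(1-s)\;=\; 2\lv S\rv-\sum_{s\in S}(s+s^{-1})\ \in\ \bR[G],
\]
where $\bigl(\sum_g c_g g\bigr)^*=\sum_g c_g g^{-1}$; for every unitary representation $\pi$ one has $\pi(\Delta)\geq 0$, and $\ker\pi(\Delta)=0$ exactly when $\pi$ has no nonzero invariant vector. The key input I will invoke is Ozawa's algebraic characterization of property (T): $(G,S)$ has property (T) if and only if there exist $\lambda>0$ and finitely many $\xi_1,\dots,\xi_n\in\bR[G]$ such that
\[
\Delta^2-\lambda\Delta \;=\; \sum_{i=1}^n \xi_i^*\xi_i \qquad \text{in } \bR[G].
\]
Only the forward direction is substantial; the converse is elementary, since when $\ker\pi(\Delta)=0$ the identity confines the spectrum of $\pi(\Delta)$ to $[\lambda,\infty)$, and this spectral gap is precisely a Kazhdan estimate $\max_{s\in S}\lVert\pi(s)\xi-\xi\rVert\geq\sqrt{\lambda/\lv S\rv}\,\lVert\xi\rVert$ for all $\xi$.

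Granting the criterion, openness becomes a soft transfer argument. Fix a point $(G_0,S_0)\in\cG_m$ with property (T) and a certificate $(\lambda,\xi_1,\dots,\xi_n)$ for $G_0$ as above; let $r$ be the maximal $S_0$-word length of an element of $\bigcup_i\Supp(\xi_i)$. Each side of the displayed identity then lies in the ball of radius $4r$ about $e$ in $G_0$, and whether the identity holds is determined purely by the partial multiplication of that ball --- equivalently, by which words in the generators of length at most some $N$ (a fixed multiple of $r$) are trivial in $G_0$. If now $(G,S)\in\cG_m$ and $d\bigl((G,S),(G_0,S_0)\bigr)<2^{-N}$, then a word of length at most $N$ is trivial in $G$ if and only if the corresponding word is trivial in $G_0$; in particular the index-preserving correspondence $S_0\leftrightarrow S$ induces a bijection between the supports of the $\xi_i$ inside $G_0$ and inside $G$ which respects all the products that enter the computation of $\Delta^2-\lambda\Delta$ and of $\sum_i\xi_i^*\xi_i$. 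Hence the transported identity holds in $\bR[G]$, and by the easy direction of the criterion $(G,S)$ has property (T). So $\{\,(G,S):d((G,S),(G_0,S_0))<2^{-N}\,\}$ is a neighbourhood of $(G_0,S_0)$ contained in the Kazhdan locus, which is therefore open.

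The one genuinely hard step is the existence of the finite certificate, and I expect that to be the main obstacle. Historically this is Shalom's theorem that every Kazhdan group is a quotient of a \emph{finitely presented} Kazhdan group, which may be taken of the form $\langle S_0\mid\{\,w:\lv w\rv\leq N\,\}\rangle$; together with the trivial fact that quotients of Kazhdan groups are Kazhdan (a unitary representation of a quotient pulls back to the group, carrying almost-invariant and invariant vectors), this already yields openness --- any $(G,S)$ within $2^{-N}$ of $(G_0,S_0)$ is a quotient of $\langle S_0\mid\{w:\lv w\rv\leq N\}\rangle$, since every length-$\leq N$ relator of $G_0$ is a relator of $G$. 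Shalom's proof of the localization statement runs by contradiction: if the groups $\langle S_0\mid\{w:\lv w\rv\leq N\}\rangle$ all lacked property (T), one would obtain unitary representations with $\tfrac1N$-almost-invariant unit vectors and no nonzero invariant vector, and one must pass to a limiting representation of $G_0$ while retaining the \emph{non}-existence of invariant vectors --- the delicate point being that a naive ultraproduct can manufacture invariant vectors, so one instead carries along a uniform lower bound certifying how far a chosen vector is from being invariant. Ozawa's route repackages the same content as a Positivstellensatz, deriving the sum-of-Hermitian-squares identity from the uniform spectral gap of a Kazhdan group via a compactness and duality argument on the state space of $\bR[G_0]$. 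Either way, once the certificate is secured, the transfer in the previous paragraph finishes the proof.
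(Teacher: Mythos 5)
This theorem is not proved in the paper at all: it is quoted as an external result, attributed to Shalom and cited as Theorem 6.7 of \cite{shalom2000rigidity}, and then used as a black box in the proof of Theorem \ref{notkazhdan}. Your proposal is therefore not comparable to a "paper proof" so much as a reconstruction of the standard argument, and as such it is correct in structure. Both of your routes work: the finite-presentability route (every Kazhdan group is a quotient of a finitely presented Kazhdan group on the same generators, with relators the length-$\leq N$ relations; any marked group within $2^{-N}$ is then a quotient of that group, and property (T) passes to quotients) is exactly Shalom's original argument for openness, and the Ozawa-certificate route is a legitimate modern repackaging in which the transfer step is an elementary bookkeeping argument about the ball of radius a fixed multiple of $r$. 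The one honest caveat --- which you flag yourself --- is that the genuinely hard content (Shalom's localization of property (T) to a finitely presented cover, or equivalently the existence of the sum-of-squares certificate for $\Delta^2-\lambda\Delta$) is only invoked and sketched, not proved; since the paper likewise outsources the entire theorem to \cite{shalom2000rigidity}, this is an acceptable level of rigor here. Two cosmetic slips worth fixing if you write this up: the relator set should be $\{w:\lv w\rv\leq N,\ w=e\text{ in }G_0\}$ rather than all words of length at most $N$, and the Kazhdan estimate $\max_{s\in S}\lVert\pi(s)\xi-\xi\rVert\geq\sqrt{\lambda/\lv S\rv}\,\lVert\xi\rVert$ holds for $\xi$ orthogonal to the invariant vectors, not for all $\xi$.
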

In light of Lemma \ref{limitofperiodic}, it is then sufficient if we prove that the groups $G_\s$ don't have property (T)
when $\s$ is periodic.
That will follow from a theorem by Matte Bon, Triestino and the second author (see Theorem 4.5 in \cite{LMBT}), which was also independently proved by Cornulier in \cite{Cornulier}.
\begin{definition}
A homeomorphism $h \colon \bS^1 \to \bS^1$ is {\em piecewise linear} if for all but finitely many points $x \in \bS^1$
there is a neighborhood $I(x)$ such that the restriction $\restr{h}{I(x)}$ is of the form $y \mapsto a y + b$.
The group of all piecewise linear homeomorphisms of $\bS^1$ is called $\PL(\bS^1)$.
\end{definition}
\begin{theorem}[Lodha, Matte Bon and Triestino \cite{LMBT}, Cornulier \cite{Cornulier}] \label{PLcircleNotKazhdan}
If $G$ is a countable Kazhdan group, every homomorphism $\rho \colon G \to \PL(\bS^1)$ has finite image.
\end{theorem}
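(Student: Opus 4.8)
This result is imported in the paper from \cite{LMBT} and \cite{Cornulier}; the strategy one would follow to prove it is as follows. Since property (T) forces $G$ to be finitely generated, we may take $G=\langle g_1,\ldots,g_k\rangle$, and since the orientation-preserving elements form a subgroup of $\PL(\bS^1)$ of index at most $2$, we may replace $G$ by its preimage under $\rho$---still Kazhdan, and with image of index at most $2$ in $\rho(G)$---and assume $\rho(G)$ is orientation-preserving. Now split according to break points. If no $\rho(g)$ has a break point, then each $\rho(g)$ lifts to a globally affine map $x\mapsto ax+b$ of the universal cover, and commuting with $x\mapsto x+1$ forces $a=1$; thus $\rho(G)$ consists of rotations, hence is abelian, hence---being a quotient of the Kazhdan group $G$---finite, and we are done. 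Otherwise $D=\bigcup_{g\in G}\{\,\text{break points of }\rho(g)\,\}$ is a nonempty, countable, $\rho(G)$-invariant subset of $\bS^1$.

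The crucial step is to extract from $D$ a \emph{nonempty finite} $\rho(G)$-invariant subset of $\bS^1$; this is where property (T) is genuinely used, through the standard implication that property (T) groups have property (FW): every $G$-commensurated subset of a $G$-set is commensurate to a $G$-invariant one. One encodes the transformation rule for break points under composition---the break points of $gh$ lie among those of $h$ together with the image under $\rho(h)^{-1}$ of those of $g$---as a $G$-commensurated configuration built from $D$, transfixes it using property (FW), and reads off a nonempty finite $\rho(G)$-orbit on $\bS^1$. I expect this to be the hard part: the commensuration datum must be arranged so that break points transform in exactly the right way \emph{and} so that a transfixing invariant subset genuinely yields a finite invariant set on the circle rather than a degenerate object. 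It is here---rather than in the easy steps above and below, which use only that $G$ is perfect---that property (T) is indispensable, and this is the technical core of \cite{LMBT} and \cite{Cornulier}.

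Granting a finite orbit $O\subseteq\bS^1$, pass to the finite-index (hence Kazhdan) subgroup $G_0\leq G$ acting trivially on $O$; then $G_0$ fixes a point $p\in\bS^1$ and hence acts by orientation-preserving piecewise linear homeomorphisms of $\bS^1\setminus\{p\}\cong\bR$. It therefore suffices to show that every finitely generated Kazhdan subgroup $G_0\leq\PL(\bR)$ is trivial; as orientation-preserving homeomorphisms of $\bR$ of finite order are trivial, this also covers the finite case. Every element of $\PL(\bR)$ has finitely many break points, hence is a single affine branch near $+\infty$ and near $-\infty$; recording the two slopes gives homomorphisms $G_0\to(\bR_{>0},\cdot)$, which vanish since a Kazhdan group has finite abelianization and $\bR_{>0}$ is torsion free, so every element of $G_0$ is a translation near $\pm\infty$; recording the two translation amounts gives homomorphisms $G_0\to(\bR,+)$, which likewise vanish, so every element of $G_0$ is the identity outside a bounded set. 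Assume $G_0\neq\{e\}$ (otherwise we are done) and set $a=\min_i\inf\Supp(g_i)$, the minimum taken over the nontrivial generators; then $\bigcup_{g\in G_0}\Supp(g)\subseteq[a,\max_i\sup\Supp(g_i)]$, so every element of $G_0$ is the identity on $(-\infty,a)$ and in particular fixes $a$. Consequently the assignment sending $g\in G_0$ to the slope of $g$ immediately to the right of $a$ is a well-defined homomorphism $G_0\to(\bR_{>0},\cdot)$ (each element being affine on a right-neighborhood of $a$ that it fixes), and it vanishes; but then the generator $g_{i_0}$ realizing $\inf\Supp(g_{i_0})=a$ is the identity on a right-neighborhood of $a$, contradicting $\inf\Supp(g_{i_0})=a$. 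Hence $G_0=\{e\}$, so $\rho(G)$ is finite.

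In short, apart from the single nontrivial input of the second paragraph---that property (FW) forces a PL circle action to have a finite orbit unless it is already by rotations, which is exactly the theorem of \cite{LMBT} and \cite{Cornulier} being imported---the proof is a chain of ``a finitely generated Kazhdan group has trivial abelianization'' arguments and elementary reductions.
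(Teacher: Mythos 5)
The paper does not prove this statement: it is imported verbatim as Theorem 4.5 of \cite{LMBT} (also \cite{Cornulier}) and used as a black box in the proof of Theorem \ref{notkazhdan}, so there is no internal proof to compare yours against. Judged on its own terms, your outer reductions are sound and I can verify them: the no-breakpoint case does force the lift to be a commuting affine map, hence a rotation, hence an abelian (so finite) Kazhdan quotient; the breakpoint set $D$ is indeed countable and invariant by the composition rule you state; and the final step is the classical ``germ at the boundary of the support'' argument (the same one showing $F'$ is not finitely generated) --- finite generation gives a leftmost point $a$ of $\bigcup\Supp(g)$, the slope-at-$a^+$ homomorphism to $(\bR_{>0},\cdot)$ dies because the abelianization is finite and the target is torsion-free, and the generator realizing $a$ then fixes a neighborhood of $a$, a contradiction. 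Two cosmetic cautions: the identification $\bS^1\setminus\{p\}\cong\bR$ must be taken PL-compatibly (cut the circle open at $p$ and work on the interval, where ``affine near the fixed endpoints'' is automatic and the translation-part homomorphisms are not even needed), and you should be explicit that $\rho(G_0)$, not $G_0$, is the finitely generated Kazhdan subgroup of $\PL(\bR)$ being killed.

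That said, your second paragraph is not a proof but a description of where the proof would live: the extraction of a finite orbit (or a conjugacy into rotations) from the commensurated breakpoint data via property (FW) is exactly the content of \cite{LMBT} and \cite{Cornulier}, and you correctly flag it as such without supplying it. So what you have is a correct proof \emph{modulo} the theorem being cited --- which is consistent with how the paper itself treats the statement, but means the one genuinely hard idea is asserted rather than established.
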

\begin{proof}[Proof of Theorem \ref{notkazhdan}]
For every $n \in \bZ_+$, $T(n)$ is isomorphic to a subgroup of $\PL(\bS^1)$. Lemma \ref{limitofperiodic} then guarantees
that for any periodic labelling $\s$, there exists a homomorphism from $G_\s$ onto an infinite subgroup of $\PL(\bS^1)$,
which by Theorem \ref{PLcircleNotKazhdan} implies that $G_\s$ does not have property (T). By Lemma \ref{limitofperiodic}, it follows that $G_\rho$
is a limit of groups without property (T), and hence by Theorem \ref{kazhdanopen} we conclude that $G_\rho$ does not have property (T).
\end{proof}

\section{Proof that $G_\rho$ is not inner amenable}
Throughout this section we assume that the labelling $\rho$ is quasi-periodic.
We aim to prove theorem \ref{notinner} using the following criterion by Haagerup and Olesen (see Corollary 3.3 in \cite{haagerup2017non}):
\begin{proposition}[Haagerup and Olesen] \label{criterion}
Let $G$ be a discrete group. If $G$ has a non-amenable subgroup $H \leq G$ such that $\{ g \in H: gh = hg \}$ is amenable for all $h \in G \setminus \{ e \}$,
then $G$ is not inner amenable.
\end{proposition}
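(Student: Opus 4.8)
The plan is to argue by contradiction, reducing the statement to the general principle that a group acting on a set with an invariant mean and amenable point-stabilizers must itself be amenable. Suppose $G$ were inner amenable, and fix a conjugation-invariant mean $\mu$ on $\ell^\infty(G)$ witnessing this. Since the identity is the unique fixed point of the conjugation action and $\mu$ is not the Dirac mass at $e$ (it does not concentrate on a single element), we have $\mu(\{e\}) < 1$, so renormalising the restriction of $\mu$ to $X := G \setminus \{e\}$ yields a conjugation-invariant mean on $X$; I would simply assume from the start that $\mu(\{e\}) = 0$. Now restrict attention to the subgroup $H$: it acts on $X$ by conjugation $x \mapsto g x g^{-1}$, this action is a subaction of the full $G$-conjugation action, and hence $\mu \restriction X$ is an $H$-invariant mean on $X$. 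The stabilizer of a point $x \in X$ for this action is exactly $\{g \in H : g x g^{-1} = x\} = C_H(x)$, which is amenable by hypothesis. Thus $H$ acts on $X$ with an invariant mean and amenable stabilizers.

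The core of the argument is then the following claim: \emph{if a group $H$ acts on a set $X$ admitting an $H$-invariant mean, and every point-stabilizer is amenable, then $H$ is amenable.} To prove it I would construct a left-invariant mean on $H$ directly. Choose a set $\{x_i\}_{i \in I}$ of orbit representatives, write $K_i = C_H(x_i)$ for the (amenable) stabilizers, fix a left-invariant mean $m_i$ on $\ell^\infty(K_i)$, and fix a transversal $T_i \subseteq H$ for the left cosets $H/K_i$, so that each $x \in X$ is uniquely of the form $x = t \cdot x_i$ with $t \in T_i$. Then define a unital positive map $E \colon \ell^\infty(H) \to \ell^\infty(X)$ by setting, for $x = t \cdot x_i$,
\[
E(f)(x) = m_i\big(k \mapsto f(t k)\big),
\]
that is, at the point $x$ one averages $f$ over the coset $t K_i$ using the stabilizer mean. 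Finally I would put $M = \mu \circ E$; this is positive and, because $E(1_H) = 1_X$ and $\mu$ is a mean, $M(1_H) = 1$, so $M$ is a mean on $H$.

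The hard part, and the only computation that really matters, is checking that $E$ intertwines left translation on $H$ with the action on $X$, i.e.\ that $s \cdot E(f) = E(\lambda_s f)$ where $(\lambda_s f)(h) = f(s^{-1}h)$. Writing $x = t \cdot x_i$ and letting $t' \in T_i$ be the representative of the coset $(s^{-1}t)K_i$, one has $s^{-1}t = t' k_0$ for some $k_0 \in K_i$; substituting and using the left-invariance of $m_i$ to absorb the correction $k_0$ gives $E(f)(s^{-1} \cdot x) = m_i\big(k \mapsto f(s^{-1} t k)\big) = E(\lambda_s f)(x)$, which is precisely equivariance. Granting this, $M(\lambda_s f) = \mu(E(\lambda_s f)) = \mu(s \cdot E(f)) = \mu(E(f)) = M(f)$ by invariance of $\mu$, so $M$ is a left-invariant mean and $H$ is amenable, contradicting the hypothesis that $H$ is non-amenable. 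Hence $G$ is not inner amenable.

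I would flag two subtleties worth care. First, the reduction to $\mu(\{e\}) = 0$ depends on the precise convention for ``atomless''; the intended meaning is that $\mu$ is not the Dirac mass at $e$, and since every conjugation orbit other than $\{e\}$ is genuinely moved, renormalising the restriction to $X$ is legitimate. Second, and conceptually the key point, the map $E$ is designed to be \emph{unital}: even though the invariant mean $\mu$ may assign measure zero to every single orbit, so one cannot simply restrict to one orbit and push the mean forward, the normalisation $E(1_H) = 1_X$ still holds because the stabilizer means are probability means and the representatives $\{x_i\}$ exhaust all orbits. This is exactly what rescues the construction from the measure-zero-orbit obstruction, and it is the step I would be most careful to get right.
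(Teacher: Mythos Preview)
The paper does not supply its own proof of this proposition; it is quoted as Corollary~3.3 of Haagerup--Olesen and used as a black box. So there is no in-paper argument to compare against.

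Your proof is correct and is essentially the standard one. The reduction to a conjugation-invariant mean on $X=G\setminus\{e\}$ is fine under the paper's convention (``does not concentrate on a single element'' forces $\mu(\{e\})<1$, so the renormalisation is legitimate), and the core lemma you isolate---an $H$-set with an invariant mean and amenable point-stabilisers forces $H$ amenable---is exactly the right engine. Your construction of the averaging map $E\colon\ell^\infty(H)\to\ell^\infty(X)$ and the equivariance computation are both sound; the absorption of the coset correction $k_0$ by left-invariance of $m_i$ is the only delicate step and you handle it correctly. One cosmetic remark: rather than choosing transversals $T_i$, you can phrase $E$ more invariantly by noting that the function $k\mapsto f(tk)$ on $K_i$ changes only by a left translation when $t$ is replaced by another representative of $tK_i$, so $m_i$ of it is well-defined independent of the choice; this sidesteps the bookkeeping with $t'$ and $k_0$ but is mathematically the same.
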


Let $H < \mathrm{Homeo}^+([0, 1])$ and $\lambda, \pi: H \to G_\rho$ be defined as in the preliminaries.
Choose $f \in F' = H'$ such that $\Supp(f) = \left(\frac{1}{16}, \frac{15}{16}\right)$,
$x \cdot f > x$ for
all $x \in (\frac{1}{16}, \frac{15}{16})$, and $\frac{2}{16} \cdot f > \frac{14}{16}$. Then proceeding as in Proposition 4.1 in \cite{HydeLodha},
$K = \< \lambda(f), \pi(f) \>$ is a subgroup of $G_\rho$ and is a free group, freely generated by $\lambda(f)$ and $\pi(f)$. Clearly $K$ is not amenable.

In the following we will use $\S$ to denote the set of generators of $K$ and their inverses:
\[
\S = \left\{ \lambda(f), \pi(f), \left( \lambda(f) \right)^{-1}, \left( \pi(f) \right)^{-1}  \right\}
\]

Recall that $x_0 \in \bR$ is called a transition point of $g \in G_\rho$ if $x_0 \in \cl{\Supp(g)} \setminus \Supp(g)$.
If $x_0$ is a transition point of $g$, then in particular $x_0 \cdot g = x_0$.
The following Lemma follows immediately from the definition of $G_{\rho}$ and from Lemma $5.1$ in \cite{HydeLodha}.
\begin{lemma}\label{transition}
Let $h, g \in G_\rho$ be non-identity elements. The following holds:
\begin{enumerate}
\item $Supp(g), Supp(h)$ have infinitely many connected components, each of which is a bounded open interval.
\item The set of transition points of $g,h$ are infinite discrete sets with no accumulation point in 
$\mathbf{R}$.
\item If $P$ is the set of transition points of $h$ and $hg=gh$, then $P\cdot g=P$.
\end{enumerate}
\end{lemma}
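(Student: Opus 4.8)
The plan is to deduce parts (1)--(2) from the ``global'' description of $G_\rho$ as $K_\rho$ (Definition \ref{Krho}) together with the quasi-periodicity of $\rho$ --- this is essentially the content of Lemma 5.1 of \cite{HydeLodha}, which I would cite in a final write-up, but I indicate the mechanism below --- and to get part (3) from a routine conjugation argument.

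Fix a non-trivial $g\in G_\rho=K_\rho$ and a constant $k_g$ as in Definition \ref{Krho}(3). Since $g$ is piecewise linear with a discrete breakpoint set in $\bZ[\frac12]$ and slopes powers of $2$, the displacement $\delta_g(x):=x\cdot g-x$ is continuous, piecewise affine, and vanishes exactly on $\bR\setminus\Supp(g)$. For $m\in\bZ$ write $\mathcal{W}_m$ for the word $\mathcal{W}(x,k_g)$ with $x\in[m,m+1)$; this depends only on $m$, has length $2k_g+1$, and its pattern of $a$-type versus $b$-type positions is the same for all $m$ and is palindromic. Condition (3.a) says: if $\mathcal{W}_m=\mathcal{W}_{m'}$ and $x\in[m,m+1)$, $x'\in[m',m'+1)$, $x-x'\in\bZ$, then $\delta_g(x)=\delta_g(x')$; equivalently, $\delta_g$ on $[m',m'+1)$ is the translate by $m'-m$ of $\delta_g$ on $[m,m+1)$. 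Condition (3.b) says: if $\mathcal{W}_m=\mathcal{W}_{m'}^{-1}$ and $x\in[m,m+1)$, $y\in[m',m'+1)$, $x-y\in\bZ$, then $\delta_g(y\cdot\iota)=-\delta_g(x)$. Now choose $x_0$ with $v:=\delta_g(x_0)\neq 0$, say $x_0\in[m_0,m_0+1)$, and set $W_0=\mathcal{W}_{m_0}$. By quasi-periodicity axiom (1), $W_0=\mathcal{W}_m$ for integers $m$ going to $+\infty$ and to $-\infty$, so by (3.a) the value $v$ is attained by $\delta_g$ at points going to $\pm\infty$; in particular $\Supp(g)$ is unbounded above and below. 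By axiom (2), $W_0^{-1}$ is itself realized as a block of $\rho$, hence (using the palindromic type pattern) as $\mathcal{W}_m$ for some $m\in\bZ$, and by axiom (1) for infinitely many such $m$ going to $\pm\infty$; then by (3.b) the value $-v$ is attained by $\delta_g$ at points going to $\pm\infty$ as well (the fold $\iota$ displaces points by at most $1$). Since $v$ and $-v$ are non-zero and of opposite sign and each is attained arbitrarily far out in both directions, the intermediate value theorem forces $\delta_g$ to vanish arbitrarily far out in both directions; hence no connected component of $\Supp(g)$ is unbounded and $\Supp(g)\neq\bR$. Therefore $\Supp(g)$ is an open set that is unbounded in both directions and all of whose connected components are bounded open intervals, so there are infinitely many of them; this is (1), and the same argument applies to $h$.

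For (2): since $\Supp(g)$ is open, the set of transition points of $g$ is exactly $\partial\,\Supp(g)$, i.e. the set of endpoints of the (bounded, pairwise disjoint) components, which is infinite by (1). For discreteness (equivalently, no accumulation point in $\bR$), observe that on each component $(a,b)$ of $\Supp(g)$ the function $\delta_g$ vanishes at $a$ and $b$ but is non-zero on $(a,b)$, hence is non-monotone, so it has a local extremum inside $(a,b)$, which forces a breakpoint of $g$ in $(a,b)$; an accumulation point of transition points would be an accumulation point of components and hence of breakpoints of $g$, contradicting the discreteness required by Definition \ref{Krho}(1). The same argument works for $h$. For (3): if $hg=gh$ then $h=g^{-1}hg$, so $\Supp(h)=\Supp(g^{-1}hg)=\Supp(h)\cdot g$; since $g$ is a homeomorphism it commutes with passing to the topological boundary, and therefore $P=\partial\,\Supp(h)=\partial(\Supp(h)\cdot g)=(\partial\,\Supp(h))\cdot g=P\cdot g$.

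The step I expect to be the main obstacle is the core argument for (1): aligning the words $\mathcal{W}_m$ under the recurrence given by quasi-periodicity so that conditions (3.a) and (3.b) apply, and in particular verifying that the ``flipped'' copies of $\delta_g$ produced by (3.b) actually introduce a value of the opposite sign rather than just a mirror image. This bookkeeping is what Lemma 5.1 of \cite{HydeLodha} already carries out, so in practice I would invoke that lemma for (1)--(2) and only write out the short discreteness argument in (2) and the conjugation argument in (3).
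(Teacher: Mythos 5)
Your proposal is correct and matches the paper's approach: the paper itself gives no argument beyond asserting that the lemma "follows immediately from the definition of $G_\rho$ and from Lemma 5.1 in \cite{HydeLodha}," which is exactly the citation you plan to make for parts (1)--(2). Your reconstruction of the mechanism (the displacement function $\delta_g$, recurrence of $\mathcal{W}(x_0,k_g)$ and of its formal inverse via quasi-periodicity, conditions (3.a)/(3.b), and the intermediate value theorem), together with the breakpoint-discreteness argument for (2) and the conjugation argument for (3), is sound and in fact more detailed than what the paper records.
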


\begin{lemma} \label{notcommute}
Let $h, g \in G_\rho$. If $h$ has a transition point $x_0$ such that $x_0 \in \Supp(g)$, then $hg \neq gh$. In particular, if $h,g$ commute, then $g$ fixes every transition point of $h$. 
\end{lemma}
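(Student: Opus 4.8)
The plan is to derive a contradiction from the commutation relation together with Lemma \ref{transition}, the key structural input being that a nontrivial element of $G_\rho$ cannot have an unbounded $\langle g \rangle$-orbit.

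First I would dispense with the ``in particular'' clause, which is just the contrapositive of the first assertion: if $hg = gh$ and some transition point $x_0$ of $h$ satisfied $x_0 \in \Supp(g)$, the first assertion would give $hg \neq gh$, a contradiction. So it suffices to prove the first assertion. Assume therefore that $hg = gh$, that $x_0$ is a transition point of $h$, and (for contradiction) that $x_0 \in \Supp(g)$; note $g \neq e$ and $x_0 \cdot g \neq x_0$. Write $x_k = x_0 \cdot g^k$ for $k \in \bZ$. Since $x_0$ is a transition point of $h$ it is fixed by $h$, and since $hg = gh$ the set $P$ of transition points of $h$ is $g$-invariant (this is Lemma \ref{transition}(3); alternatively, $\Supp(g^{-1}hg) = \Supp(h) \cdot g$, and $g^{-1}hg = h$, so $\overline{\Supp(h)}$ and hence $P = \overline{\Supp(h)} \setminus \Supp(h)$ are $g$-invariant). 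Hence $x_k \in P$ for all $k$. Because $g$ is orientation preserving and $x_1 \neq x_0$, the sequence $(x_k)_{k \in \bZ}$ is strictly monotone, so its terms are pairwise distinct.

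Next I would show that $(x_k)$ is unbounded: by Lemma \ref{transition}(2), $P$ is a discrete subset of $\bR$ with no accumulation point, so a strictly monotone sequence lying in $P$ cannot converge in $\bR$ and therefore tends to $+\infty$ or to $-\infty$. Finally I would reach the contradiction using Lemma \ref{transition}(1) applied to $g$: since $x_0 \in \Supp(g)$ and $g \neq e$, the point $x_0$ lies in a connected component $(a,b)$ of $\Supp(g)$ that is a \emph{bounded} open interval. Its endpoints lie outside $\Supp(g)$, so they are fixed by $g$; thus the homeomorphism $g$ carries $(a,b)$ onto itself, forcing $x_k = x_0 \cdot g^k \in (a,b)$ for every $k$, contradicting the unboundedness of $(x_k)$.

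I do not anticipate a real obstacle: once Lemma \ref{transition} is in hand the argument is short, and the only thing to be careful about is that it is being applied to $g$ (for part (1)) as well as to $h$ (for parts (2) and (3)). It is worth noting that this is exactly where quasi-periodicity of $\rho$ enters — for a periodic labelling $G_\rho$ contains a translation, whose support is all of $\bR$ with a single unbounded component, so Lemma \ref{transition}(1) genuinely fails there, in accordance with the fact that the statement of the present lemma is false for periodic labellings.
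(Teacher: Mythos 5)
Your proof is correct and follows essentially the same route as the paper's: both use Lemma \ref{transition}(3) to get $g$-invariance of the set $P$ of transition points, and then derive a contradiction between the boundedness of the component of $\Supp(g)$ containing $x_0$ (Lemma \ref{transition}(1)) and the discreteness of $P$ (Lemma \ref{transition}(2)), via the infinite $\langle g\rangle$-orbit of $x_0$ trapped in that component. Your write-up just makes the monotonicity/unboundedness step more explicit than the paper does.
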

\begin{proof}
Let $P$ be the set of transition points of $h$.
Since $h,g$ commute, then from Lemma \ref{transition} it follows that $P\cdot g=P$.
However, since also from Lemma \ref{transition} every connected component of support of every element of $G_{\rho}$ is a bounded interval, it follows that any connected component of support of $g$ containing an element of $P$ must contain infinitely many elements of $P$, which contradicts the second part of Lemma \ref{transition}.
\end{proof}

From Proposition \ref{Fprimefff}, we shall derive the following key Corollaries.

\begin{cor}\label{nonfree}
Let $f_1,f_2\in G_{\rho}$ be elements that both fix a point $x\in \mathbf{R}$.
Then the group generated by $f_1,f_2$ is not the free group of rank $2$.
\end{cor}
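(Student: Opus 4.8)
The plan is to argue by contradiction. Suppose $G := \langle f_1, f_2 \rangle$ is free of rank $2$. Since $f_1$ and $f_2$ both fix $x$, every element of $G$ fixes $x$, so we may study the germs of elements of $G$ at $x$. If $g \in G_\rho$ fixes $x$, then — because the breakpoints of $g$ are discrete and dyadic and $x \cdot g = x$ — there is an $\epsilon > 0$ with $y \cdot g = x + 2^{m^-(g)}(y - x)$ for $y \in (x - \epsilon, x]$ and $y \cdot g = x + 2^{m^+(g)}(y - x)$ for $y \in [x, x + \epsilon)$, for uniquely determined integers $m^\pm(g)$. By the chain rule, $\phi \colon G \to \mathbf{Z}^2$, $\phi(g) = (m^-(g), m^+(g))$, is a homomorphism, and $\ker\phi$ is precisely the set of elements of $G$ that restrict to the identity on some open interval containing $x$.

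Since $\mathbf{Z}^2$ is abelian, $[G,G] \le \ker\phi$. Because $G \cong F_2$ is non-abelian and not metabelian, its commutator subgroup $[G,G]$ — which is free by the Nielsen--Schreier theorem — is neither trivial nor cyclic, hence free of rank $\ge 2$; in particular it contains two elements $g_1, g_2$ generating a free group of rank $2$. Each $g_i$ restricts to the identity on some open interval $I_i \ni x$; put $I = I_1 \cap I_2$, an open interval pointwise fixed by both $g_1$ and $g_2$. Proposition \ref{Fprimefff}, applied with $n = 2$ to $g_1, g_2$ and the interval $I$, yields a subgroup $A < G_\rho$ isomorphic to a finite direct sum $F' \oplus \cdots \oplus F'$ of, say, $k$ copies of $F'$, with $g_1, g_2 \in A$; hence $A$ contains the rank-$2$ free group $\langle g_1, g_2 \rangle$.

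It then remains to observe that a finite direct sum of copies of $F'$ contains no subgroup isomorphic to $F_2$, which gives the contradiction. Since $F' \le F$ and, by the classical theorem of Brin and Squier, Thompson's group $F$ has no non-abelian free subgroup, $F'$ contains no copy of $F_2$. If $F_2 \hookrightarrow F' \oplus \cdots \oplus F'$ (with $k \ge 1$ summands, which holds since $g_1 \ne e$), then composing with the $j$-th coordinate projection gives a homomorphism $q_j \colon F_2 \to F'$ which cannot be injective, so $\ker q_j \ne 1$ is a nontrivial normal subgroup of $F_2$ for every $j$; yet $\bigcap_j \ker q_j = 1$ because the embedding is faithful. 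This contradicts the fact that any two — hence any finitely many — nontrivial normal subgroups of a free group have nontrivial intersection: if $M, N \trianglelefteq F_2$ with $M \cap N = 1$, then $[M,N] \le M \cap N = 1$, so a nontrivial element of $N$ is centralized by all of $M$, forcing $M$ to be cyclic (centralizers in free groups are cyclic); but a nontrivial normal cyclic subgroup of $F_2$ would be central in a subgroup of index $\le 2$, which is free of rank $\ge 2$ and hence centerless.

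The only substantive point is the reduction from a common fixed \emph{point} to a common fixed \emph{open interval}: the germ homomorphism $\phi$ is exactly what accomplishes this, and it is essential that all of $G$ (not merely a generating pair) fixes $x$ so that $\phi$ is defined on $G$. Everything else is routine once one grants the Brin--Squier theorem (and the Nielsen--Schreier theorem), so I expect the final write-up to be short.
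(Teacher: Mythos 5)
Your proof is correct and follows essentially the same route as the paper's: both exploit that the (one-sided slope) germ homomorphism at $x$ is abelian-valued, so that suitable commutators pointwise fix a neighborhood of $x$, then invoke Proposition \ref{Fprimefff} together with the fact that a finite direct sum of copies of $F'$ contains no nonabelian free subgroup. The only cosmetic difference is that the paper works with the explicit pair $[f_1,f_2]$, $[f_1^2,f_2^2]$ where you pick an abstract rank-$2$ free subgroup of $[G,G]$, and you spell out the Brin--Squier and normal-subgroup-intersection details that the paper leaves implicit.
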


\begin{proof}
Assume that $\langle f_1,f_2\rangle$ is a free group of rank $2$. Then the group generated by 
$$\alpha_1=[f_1,f_2]\qquad \alpha_2=[f_1^2,f_2^2]$$ is also free of rank $2$. However, since the group of germs of $f_1,f_2$ at the fixed point $x$ is abelian (since the slopes are powers of $2$), $\alpha_1,\alpha_2$ have trivial germs at $x$. It follows that $\alpha_1,\alpha_2$ pointwise fix a neighborhood of $x$, and hence satisfy the hypothesis of Proposition \ref{Fprimefff}. It follows that the group generated by $\alpha_1,\alpha_2$ is not the free abelian group of rank $2$, since a finite direct sum of copies of $F'$ does not contain nonabelian free groups. This contradicts our hypothesis.
\end{proof}

\begin{cor} \label{cyclicstabilizer}
For any $x_0 \in \bR$, the stabilizer of $x_0$ in $K$ is either trivial or isomorphic to $\bZ$.
\end{cor}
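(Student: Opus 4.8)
The plan is to combine the Nielsen--Schreier theorem with Corollary \ref{nonfree}. Recall that $K = \< \lambda(f), \pi(f) \>$ is a free group of rank $2$. Fix $x_0 \in \bR$ and write $L = \{ g \in K : x_0 \cdot g = x_0 \}$ for its stabilizer. As a subgroup of a free group, $L$ is itself free by the Nielsen--Schreier theorem; so it suffices to show that the rank of $L$ is at most $1$, since a free group of rank $0$ is trivial and a free group of rank $1$ is isomorphic to $\bZ$.

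Suppose toward a contradiction that $L$ has rank at least $2$. Then $L$ contains a free subgroup of rank exactly $2$: indeed, any two distinct elements of a free basis of $L$ generate such a subgroup, say $\< f_1, f_2 \>$ with $f_1, f_2$ freely generating a copy of the free group of rank $2$. Both $f_1$ and $f_2$ lie in $K \leq G_\rho$ and both fix the point $x_0$. By Corollary \ref{nonfree}, the group $\< f_1, f_2 \>$ cannot be the free group of rank $2$, contradicting the choice of $f_1, f_2$.

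Hence $L$ has rank at most $1$, and is therefore either trivial or isomorphic to $\bZ$, as claimed. I do not anticipate any genuine obstacle here; the only points deserving a word of justification are that subgroups of free groups are free, that a free group of rank $\geq 2$ contains a free group of rank $2$, and that a $2$-generated group isomorphic to the free group of rank $2$ is in fact freely generated by any generating pair (by Hopficity), which is exactly what allows the application of Corollary \ref{nonfree}.
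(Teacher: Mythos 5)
Your proof is correct and follows essentially the same route as the paper: both invoke Nielsen--Schreier to see that the stabilizer is free, and both rule out rank $\geq 2$ by extracting a two-generated free subgroup fixing $x_0$ and contradicting Corollary \ref{nonfree}. The concluding remark about Hopficity is not actually needed, since two distinct elements of a free basis of the stabilizer automatically freely generate a rank-$2$ free subgroup.
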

\begin{proof}
Since $K$ is a free group, the stabilizer of $x_0$ in $K$ is either trivial, isomorphic to $\bZ$, or a nonabelian free group. By Corollary \ref{nonfree}, the stabilizer of $x_0$ in $G_{\rho}$ (an overgroup of the stabilizer of $x_0$ in $K$) does not contain nonabelian free subgroups. It follows that the stabilizer of $x_0$ in $K$ cannot be a nonabelian free group.
\end{proof}

\begin{proof}[Proof of Theorem \ref{notinner}]
Let $h \in G_\rho \setminus \{ e \}$. 
From Lemma \ref{transition} it follows that $h$ has an infinite set of transition points. We pick one and call it $x_0$.
By Lemma~\ref{notcommute}, the subgroup $C = \{ g \in K: gh = hg \}$ is a subgroup of the stabilizer
of $x_0$ in $K$, which by Lemma~\ref{cyclicstabilizer} is amenable, hence $C$ is also amenable.
At the same time, $K$ is not amenable, so we can apply Proposition \ref{criterion} to conclude that $G_\rho$ is not inner amenable.
\end{proof}

\section{Proof that $\b_1^{(2)}(G_\rho) = 0$}

We shall use the approach described by Arnaud Brothier in Chapter 5 of \cite{brothier2022forest}.
\begin{definition}
For a group $G$, we define a {\em good list of generators $L$} to be either a nonempty finite list
$(g_1, \ldots, g_k)$ with $k \geq 2$ or an infinite list $\{ g_j \}_{j = 1}^\i$ of elements of $G$ satisfying
\begin{itemize}
\item
the elements of $L$ generate $G$;
\item
two consecutive elements of $L$ commute;
\item
each element of the list (except possibly the last one) is required to have infinite order.
\end{itemize}
\end{definition}
\begin{proposition}[Brothier] \label{goodlistl2betti}
If $G$ admits a good list of generators, then its first $\ell^2$-Betti number is equal to 0.
\end{proposition}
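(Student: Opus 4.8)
\emph{The plan} is to compute $\beta_1^{(2)}(G)$ through reduced $\ell^2$-cohomology. Recall the theorem (Lück) that $\beta_1^{(2)}(G) = \dim_{\mathcal N G}\overline H^1(G,\ell^2 G)$, where $\overline H^1(G,\ell^2 G) = Z^1/\overline{B^1}$ is the space of $1$-cocycles (derivations) $c\colon G\to\ell^2 G$, satisfying $c(gh)=c(g)+g\,c(h)$, modulo the closure of the inner derivations $d_\xi\colon g\mapsto \xi - g\,\xi$ (closure in the topology of pointwise convergence into $\ell^2 G$). Since the von Neumann dimension is faithful on Hilbert $\mathcal N G$-modules, it suffices to prove $\overline H^1(G,\ell^2 G)=0$, i.e. that every derivation $c$ lies in $\overline{B^1}$. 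A derivation is determined by its values on a generating set, and $d_{\xi_N}(a_i)\to c(a_i)$ on generators forces $d_{\xi_N}\to c$ on all of $G$ (expand a word $g=a_1\cdots a_r$ via the cocycle identity and use continuity of left translation). So it is enough, given $c$, to produce vectors $\xi_N\in\ell^2 G$ with $\xi_N-g\,\xi_N\to c(g)$ for every $g$ in the good list $L$.

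The engine is an averaging lemma. Fix a derivation $c$ and an element $s\in G$ of infinite order, and set $\xi_N=\frac1N\sum_{j=0}^{N-1}c(s^j)$. For any $t$ commuting with $s$ one has $(1-t)c(s^j)=(1-s^j)c(t)$: for $t=s$ this is the cocycle relation applied to $s^{j+1}=s\cdot s^j=s^j\cdot s$, and the general case is identical since $t$ commutes with $s^j$. Hence $(1-t)\xi_N=\bigl(1-\frac1N\sum_{j=0}^{N-1}s^j\bigr)c(t)$. By the mean ergodic theorem $\frac1N\sum_{j=0}^{N-1}s^j\to P_s$ strongly, where $P_s$ projects onto the $s$-invariant vectors of $\ell^2 G$; as $s$ has infinite order this space is $\{0\}$, so $(1-t)\xi_N\to c(t)$. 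In words: averaging $c$ over an infinite-order element $s$ produces inner derivations converging to $c$ pointwise on the entire centralizer of $s$.

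Now I would exploit the good-list hypotheses. Consecutive elements of $L$ commute, so for an interior element $g_i$ (which has infinite order) the centralizer of $g_i$ contains $\{g_{i-1},g_i,g_{i+1}\}$, and the averaging lemma recovers $c$ on all three at once; note the possibly finite-order last element $g_k$ is recovered by averaging over $g_{k-1}$, so one never averages over it. Since the commutation graph of $L$ is a connected path, these overlapping windows cover $L$ and share infinite-order generators at their overlaps. The remaining task is to assemble the windowed approximations into a single sequence recovering $c$ on all of $L$. I would induct along the chain: given a sequence recovering $c$ on $\{g_1,\dots,g_m\}$, average the residual cocycle $c-d_{\xi_N}$ over $g_m$ (which commutes with $g_{m+1}$) to pick up $g_{m+1}$, then pass to a diagonal subsequence; for an infinite list one exhausts $G$ by $\langle g_1,\dots,g_m\rangle$ and diagonalizes.

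The hard part is exactly this assembly. The difficulty is that averaging over $g_m$ controls only the centralizer of $g_m$ and may perturb the already-handled generators $g_1,\dots,g_{m-1}$ that do not commute with $g_m$; keeping these perturbations negligible, rather than reintroducing error, is the crux, and is what forces the careful diagonalization and the use of infinite order at every interior stage. As a consistency check, when $G$ is the right-angled Artin group on the path graph (only the consecutive commutation relations) the statement also follows from the Mayer–Vietoris formula for $\ell^2$-Betti numbers, writing $\langle g_1,\dots,g_{m+1}\rangle=\langle g_1,\dots,g_m\rangle\ast_{\langle g_m\rangle}\langle g_m,g_{m+1}\rangle$ and using that each infinite amenable factor and each edge group has vanishing $\beta_0^{(2)}$ and $\beta_1^{(2)}$; the cohomological argument above is the substitute that works without assuming such a splitting.
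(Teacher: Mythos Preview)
The paper does not prove this proposition; it is quoted from Brothier's survey and used as a black box, so there is no in-paper argument to compare against.

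Your averaging lemma is correct and is indeed the engine. The identity $(1-t)c(s^j)=(1-s^j)c(t)$ for commuting $s,t$ is exactly right, and the mean ergodic theorem (no nonzero $s$-invariant vectors in $\ell^2G$ when $s$ has infinite order) finishes that step.

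The assembly, however, has a genuine difficulty that your sketch does not resolve. When you average the residual $c_N=c-d_{\xi_N}$ over $g_m$, the new correction $\eta_{N,M}=\tfrac1M\sum_{j<M}c_N(g_m^j)$ has norm at most $\tfrac{M-1}{2}\|c_N(g_m)\|$, while $M$ must be large enough that $\tfrac1M\sum_{j<M}g_m^{\,j}\,c_N(g_{m+1})$ is small. The mean ergodic theorem provides no rate, so nothing forces $M\|c_N(g_m)\|$ to be small; the perturbation $\|d_{\eta_{N,M}}(g_i)\|\le 2\|\eta_{N,M}\|$ on the earlier generators $g_1,\dots,g_{m-2}$ is therefore uncontrolled, and the diagonal subsequence you invoke need not converge.

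There is a one-line strengthening of your own computation that bypasses the assembly entirely. Taking $j=1$ in your identity gives $(1-s)c(t)=(1-t)c(s)$; hence if $c(s)=0$ with $s$ of infinite order, then $c(t)$ is $s$-invariant and so $c(t)=0$ for every $t$ commuting with $s$. Chaining along the good list, $c(g_1)=0$ forces $c(g_2)=0$, then $c(g_3)=0$, and so on, so the evaluation map $c\mapsto c(g_1)$ is an \emph{injective} bounded $\mathcal{N}G$-module map $Z^1(G,\ell^2G)\to\ell^2G$. When $G$ is finitely generated (as in the application to $G_\rho$), $Z^1$ is a Hilbert $\mathcal{N}G$-module sitting inside a finite power of $\ell^2G$, so this injection gives $\dim_{\mathcal{N}G}Z^1\le 1$; on the other hand $\xi\mapsto d_\xi$ is injective (infinite $G$ has no invariant vectors), so $\dim_{\mathcal{N}G}\overline{B^1}=1$. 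Thus $\beta_1^{(2)}(G)=\dim_{\mathcal{N}G}\bigl(Z^1/\overline{B^1}\bigr)=0$, with no iterative approximation needed.
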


\begin{proof}[Proof of Theorem \ref{l2betti}]
Recall that the subgroups $\cK$ and $\cL$ of $G_\rho$ were defined as
$\cK = \< \z_1, \z_2, \z_3 \>$ and $\cL = \< \chi_1, \chi_2, \chi_3 \>$,
and $G_\rho$ is generated by their commutator subgroups, $G_\rho = \< \cK' \cup \cL' \>$.
Below, we will construct a sequence containing all elements of $\cK' \cup \cL'$
such that every two consecutive elements of the sequence will commute.

Both $\cK$ and $\cL$ are isomorphic to $H = \< F', \n \>$. The isomorphisms are given by
$\lambda \colon H \to \cK$ and $\pi \colon H \to \cL$ defined in Section \ref{Prel}.
Furthermore, the commutator subgroup $H' < H$ is equal to $F'$, in other words,
consists of exactly those elements of $F$ whose support is included in some
subinterval $[a, b] \subseteq (0, 1)$. If $f \in H$ satisfies $\Supp(f) \subseteq [\e, 1 - \e] \subseteq (0, 1)$,
then $\Supp(\lambda(f)) \subseteq \bigcup_{k \in \bZ} [k + \e, k + 1 - \e]$
and $\Supp(\pi(f)) \subseteq \bigcup_{k \in \bZ} \left[k + \frac{1}{2} + \e, k + \frac{3}{2} - \e \right]$.

Based on the above, we know that for every element $f \in \cK'$ there exists $\e \in \left( 0, \frac{1}{4} \right)$ such that
$\Supp(f) \subseteq {\bR \setminus \bigcup_{k \in \bZ} [k - \e, k + \e]}$. We can then
find $f' \in \cK'$ whose support is included in ${\bigcup_{k \in \bZ} (k - \e, k + \e)}$ and hence $ff' = f' f$.
Similarly, for every $g \in \cL'$
there exists $\d \in \left( 0, \frac{1}{4} \right)$ such that $\Supp(g) \subseteq {\bR \setminus \bigcup_{k \in \bZ} \left[ k + \frac{1}{2} - \d, k + \frac{1}{2} + \d \right]}$,
and we can find $g' \in \cL'$ with
$\Supp(g') \subseteq \bigcup_{k \in \bZ} \left( k + \frac{1}{2} - \d, k + \frac{1}{2} + \d \right)$
and $gg' = g'g$. Notice that since $\e, \d < \frac{1}{4}$, we also have $f'g' = g'f'$.

Let $\{ f_n \}_{n = 0}^\i$ be a sequence containing all elements of $\cK'$, and let $\{ g_n \}_{n = 0}^\i$ contain all elements of $\cL'$.
We now define a new sequence $\{ h_n \}$ as follows. Let $h_0 = f_0$, $h_3 = g_0$, and let $h_1$ and $h_2$ be two elements
such that $[h_0, h_1] = [h_1, h_2] = [h_2, h_3] = e$. Then let $h_6 = f_1$, and $h_4, h_5$ be such that
$[h_3, h_4] = [h_4, h_5] = [h_5, h_6] = e$. And so on in that fashion. The sequence $\{ h_n \}$ contains all elements
of $\cK' \cup \cL'$ and each two consecutive elements of $\{ h_n \}$ commute.
Also, since $G_\rho$ is torsion free, all elements of $\{ h_n \}$ have infinite order.
So $\{h_n\}$ is a good list of generators for $G_\rho$, and it follows by Proposition \ref{goodlistl2betti}
that $\b_1^{(2)}(G_\rho) = 0$ as promised.

\end{proof}

\section{Uniform simplicity}

The goal of this section is to provide the proof of Theorem \ref{UniformlySimpleMainTheorem}.
Throughout this section we fix a quasi-periodic labelling $\rho$.
Recall that given an element $f$ in a group, we denote by $C_f$ the conjugacy class of $f$ in the group.
A group $G$ is said to be $n$-uniformly simple if for each $\alpha,\beta\in G\setminus \{e\}$, $\alpha$ is a product of at most $n$ elements in $C_{\beta}\cup C_{\beta^{-1}}$.
We shall need the following fact.
\begin{theorem}[Theorem $1.1$ in \cite{prox}]\label{USF}
The derived subgroup of Thompson's group $F$ is $6$-uniformly simple.
\end{theorem}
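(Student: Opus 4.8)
The plan is to reproduce the proximal-action philosophy behind the cited result, specialised to the action of $F'$ on the open interval $(0,1)$. Recall that $F'$ consists precisely of those elements of $F$ whose support is a compact subset of $(0,1)$, and that this action is extremely flexible: for any two dyadic closed intervals $[p,q],[r,s]\subset(0,1)$ there is an element of $F'$ carrying one onto the other, and more generally $F'$ can compress any prescribed compact subset of $(0,1)$ into any nonempty open subinterval. The second dynamical input is that every nontrivial $\beta\in F'$ possesses a \emph{bump}: a connected component $(c,d)$ of $\Supp(\beta)$ on which $\beta(x)>x$ throughout (or $<$ throughout). On such a component the forward orbit of a dyadic point accumulates at $d$, so for a small dyadic $I\subset(c,d)$ the intervals $I,\beta(I),\beta^2(I),\dots$ form a disjoint tower converging to $d$.

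First I would prove a finite \emph{displacement lemma}: any $g\in F'$ supported in a single dyadic interval $I\subset(c,d)$ with $\beta(I)\cap I=\es$ can be written as a bounded product of commutators of the form $[w,\beta^{\pm1}]$ with $w\in F'$. The naive route---setting $w=\prod_{n\ge0}\beta^{n}g\beta^{-n}$ so that formally $g=[w,\beta]$---is illegal here, because that infinite product accumulates infinitely many breakpoints at $d$ and hence leaves $F'$. The essential technical work is to replace it by a \emph{finite} construction: one uses that the restriction of $F'$ to $(c,d)$ is again a copy of $F$ in which $\beta$ acts as a one-bump ``translation,'' and expresses $g$ as a bounded product of honest commutators with $\beta$ (not with arbitrary elements) supported on only finitely many steps of the tower, staying piecewise linear with finitely many breakpoints. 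Each such commutator $[w,\beta^{\pm1}]=(w\beta^{\pm1}w^{-1})\beta^{\mp1}$ is then visibly a product of two conjugates of $\beta^{\pm1}$.

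To finish, given arbitrary nontrivial $\alpha,\beta\in F'$, I would use the compression flexibility to conjugate $\alpha$ within $F'$ into an element $\alpha'$ supported in a single small interval lying in one step of the tower of a bump of $\beta$; conjugating a product of $k$ conjugates of $\beta^{\pm1}$ by a fixed element of $F'$ again yields a product of $k$ conjugates of $\beta^{\pm1}$, so this reduction does not inflate the count. Applying the displacement lemma to $\alpha'$ then exhibits $\alpha$ as a product of at most three commutators with $\beta^{\pm1}$, hence of at most six conjugates of $\beta^{\pm1}$. The main obstacle---and the precise origin of the constant $6$---is the bookkeeping in the displacement lemma: one must engineer the finite construction so that an arbitrary compressed element costs at most three commutators with $\beta$ while remaining inside $F'$ (the infinite-product shortcut being unavailable), and must handle uniformly the forward and backward bump cases so that $\beta^{-1}$ is available exactly when required. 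The degenerate case in which the available bump of $\beta$ is very short compared with $\Supp(\alpha)$ is precisely what forces the initial compression step.
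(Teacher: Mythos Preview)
The paper does not prove this statement; it is quoted as Theorem~1.1 of \cite{prox} and used as a black box in the proof of Lemma~\ref{directsum}. There is therefore no proof in the paper against which to compare your proposal.

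Regarding your sketch on its own terms: the overall strategy---conjugate $\alpha$ into a small interval $I$ lying inside a bump of $\beta$ with $\beta(I)\cap I=\emptyset$, then express it through commutators with $\beta$---is the right philosophy and matches the proximal-action framework of the cited reference. The genuine gap is your ``displacement lemma.'' You correctly observe that the infinite product $\prod_{n\ge 0}\beta^{n}g\beta^{-n}$ escapes $F'$, and you assert that a finite substitute exists yielding at most three commutators with $\beta$, but you give no construction whatsoever; you explicitly label this step ``the main obstacle'' and then move on. Since this is precisely where the constant $6$ would be established, the argument as written is incomplete. One standard way around the infinite-product obstruction is not to express the compressed element directly as commutators with $\beta$ at all, but first to write it as a commutator $[g,h]$ of elements supported in $I$ and then use the identity $[g,h]=[[g,\beta],h]$, valid when $\Supp(g),\Supp(h)\subset I$ and $\beta$ displaces $I$; each instance of this identity costs four conjugates of $\beta^{\pm1}$, and combining it with a commutator-width estimate for $F'$ is what produces the explicit bound in \cite{prox}. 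Your proposed route---three commutators with $\beta$ for an arbitrary compressed element---would require a genuinely different argument that you have not supplied.
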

Let $G_1,...,G_m$ be groups. An element of a finite direct sum $\bigoplus_{1\leq i\leq m}G_i$ is called \emph{full}, if its image under the projection onto each coordinate is nontrivial.

\begin{lem}\label{directsum}
Denote by $F'$ the derived subgroup of Thompson's group $F$.
There is a $k\in \mathbf{N}$ such that the following holds.
Let $\bigoplus_{1\leq i\leq n} F_i'$ be a direct sum, where each $F_i'\cong F'$ and $n\in \mathbf{N}$ is arbitrary.
Then for each pair of full elements $f,g\in \bigoplus_{1\leq i\leq n} F_i'$, $g$ is a product of at most $k$ elements in $C_{f}\cup C_{f^{-1}}$. 
\end{lem}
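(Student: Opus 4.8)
The plan is to reduce the statement for a direct sum $\bigoplus_{1\le i\le n} F_i'$ to the single-factor case (Theorem \ref{USF}), using the fact that $6$-uniform simplicity of $F'$ is witnessed uniformly across all coordinates at once. Fix full elements $f = (f_1,\dots,f_n)$ and $g = (g_1,\dots,g_n)$, so that each $f_i,g_i \in F_i'\setminus\{e\}$. By Theorem \ref{USF}, for every $i$ there are conjugators $c_{i,1},\dots,c_{i,6}$ in $F_i'$ and signs $\e_{i,1},\dots,\e_{i,6}\in\{\pm1\}$ with $g_i = \prod_{j=1}^{6} c_{i,j}^{-1} f_i^{\e_{i,j}} c_{i,j}$, where we allow trivial factors so that the product always has exactly $6$ terms. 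The obstacle is that the sign pattern $(\e_{i,1},\dots,\e_{i,6})$ may genuinely depend on $i$: a priori the conjugate decomposition of $g_i$ in terms of $f_i$ need not use the same number of $f_i$'s versus $f_i^{-1}$'s in each factor, so one cannot simply assemble the $c_{i,j}$ into six elements of $\bigoplus F_i'$ that conjugate $f$ (or $f^{-1}$) to the pieces of $g$.

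To handle this, I would first observe that $F'$ is $N$-uniformly simple \emph{with a fixed sign pattern} for a possibly larger $N$ — concretely, that there is an $N$ such that for all nontrivial $u,v\in F'$, $v$ is a product of exactly $N$ conjugates of $u$ (no inverses needed), or alternatively $N$ conjugates of $u^{-1}$, with the pattern not depending on $u,v$. This follows from Theorem \ref{USF} together with the standard trick that in $F'$ any element is a commutator (indeed $F'$ is uniformly perfect), hence $v = [x,y] = (x^{-1}ux)(x^{-1}u^{-1}x)\cdots$ is not quite what we want, but: one can write $v$ as a product of $6$ conjugates of $u^{\pm1}$ and then replace each appearance of $u^{-1}$ by a product of conjugates of $u$ using that $u^{-1}$ itself is a bounded product of conjugates of $u$ (again by Theorem \ref{USF} applied with $\beta = u$, $\alpha = u^{-1}$, and noting $C_{u^{-1}}\cup C_u$, so $u^{-1}$ is a product of at most $6$ elements of $C_u\cup C_{u^{-1}}$; iterating / bookkeeping gives a fixed bound $N$ with a uniform, $u$-independent, all-positive pattern). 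This normalization step is the crux and where the care is needed.

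Once we have such an $N$ with a uniform positive sign pattern, the direct-sum case is immediate: for each $i$ choose conjugators $c_{i,1},\dots,c_{i,N}\in F_i'$ with $g_i = \prod_{j=1}^{N} c_{i,j}^{-1} f_i\, c_{i,j}$, set $d_j = (c_{1,j},\dots,c_{n,j}) \in \bigoplus_{1\le i\le n} F_i'$, and then
\[
\prod_{j=1}^{N} d_j^{-1} f\, d_j = \Bigl( \prod_{j=1}^{N} c_{1,j}^{-1} f_1\, c_{1,j},\ \dots,\ \prod_{j=1}^{N} c_{n,j}^{-1} f_n\, c_{n,j} \Bigr) = (g_1,\dots,g_n) = g,
\]
so $g$ is a product of $N$ elements of $C_f$. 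Thus $k = N$ works, uniformly in $n$. I would finish by remarking that the same argument with $f^{-1}$ in place of $f$ covers the $C_{f^{-1}}$ alternative, and that fullness of $f$ and $g$ is exactly what guarantees every $f_i$ and $g_i$ is nontrivial so that Theorem \ref{USF} applies in each coordinate.
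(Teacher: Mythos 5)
You correctly isolate the real difficulty: the sign pattern $(\epsilon_{i,1},\dots,\epsilon_{i,6})$ produced by Theorem \ref{USF} may vary with the coordinate $i$, so the coordinate-wise conjugators cannot simply be assembled into elements of the direct sum. But your proposed fix --- normalizing to a single, $u$-independent, all-positive sign pattern by writing $u^{-1}$ as a bounded product of conjugates of $u$ --- rests on a false claim. Inside $F'<\mathrm{Homeo}^+([0,1])$ the set $P=\{v : x\cdot v\ge x \text{ for all } x\}$ is a conjugation-invariant submonoid with $P\cap P^{-1}=\{e\}$: if $u$ moves every point of its support to the right, then so does every conjugate $h^{-1}uh$ (because $h$ is order-preserving), and so does every product of such conjugates. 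Hence for a one-bump positive element $u$, the element $u^{-1}$ is not a product of \emph{any} number of conjugates of $u$, bounded or not. Theorem \ref{USF} only expresses $u^{-1}$ as a product of elements of $C_u\cup C_{u^{-1}}$, which cannot eliminate the inverses, so the ``iterating / bookkeeping'' step you flag as the crux cannot be carried out; the same bi-invariant partial order shows no uniform all-positive (or all-negative) pattern can exist. The final assembly paragraph of your proof is fine, but it is conditional on this false lemma.

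The paper's proof keeps the mixed signs and instead forces them to match across coordinates by pigeonhole. It first replaces $f_i$ by $l_i=[h_i,f_i]\neq e$; the point is that for any $A\subseteq\{1,\dots,n\}$ the element $\tau_A$ equal to $l_i$ on coordinates in $A$ and trivial elsewhere is $[h_A,f]$ for a suitable $h_A$ in the direct sum, hence a product of $2$ elements of $C_f\cup C_{f^{-1}}$. Each coordinate gets a ``signature'' in $\{+,-,\emptyset\}^6$ recording the signs in a length-$6$ expression of $g_i$ in conjugates of $l_i^{\pm1}$; there are at most $3^6$ signatures, within a fixed signature class the conjugators do assemble, so the corresponding block of $g$ is a product of $6$ conjugates of $\tau_A^{\pm1}$, i.e.\ at most $12$ elements of $C_f\cup C_{f^{-1}}$, and multiplying over the at most $3^6$ classes gives $k=3^6\cdot 12$. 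Some device of this kind (bounding the number of possible sign patterns rather than eliminating the signs) is needed to repair your argument.
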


\begin{proof}
We know from Theorem \ref{USF} that for every pair of nontrivial elements $\alpha,\beta\in F_i'$,
$\alpha$ can be expressed as a product of at most six elements in $C_{\beta}\cup C_{\beta^{-1}}$.

Now let $$f=(f_i)_{1\leq i\leq n}\qquad g=(g_i)_{1\leq i\leq n}\qquad f_i,g_i\in F_i'$$
We choose $h_1,...,h_n\in F'$ such that $l_i=[h_i,f_i]\neq e$.
It follows that $g_i$ is also a product $$g_i=\gamma_1^{(i)}...\gamma_6^{(i)}\qquad \gamma_j^{(i)}\in C_{l_i}\cup C_{l_i^{-1}}\cup \{1\}$$
For the index $i$, this provides a finite sequence which is an element of $\{+,-,\emptyset\}^{<\mathbf{N}}$, as follows.
The $j$'the term of the sequence is $+$ if $\gamma_j^{(i)}\in C_{l_i}$, $-$ if $\gamma_j^{(i)}\in C_{l_i^{-1}}$ and the string $\emptyset$ if $\gamma_j^{(i)}=1$. 
We call this the \emph{signature of the index $i$}.
Clearly, there are fewer than $3^6$ such possible signatures.
Whenever two indices $i,j$ have the same signature, the element $g_i\oplus g_j\in F'\oplus F'$ can be expressed as a product of at most six elements in $C_{l_i\oplus l_j}\cup C_{l_i^{-1}\oplus l_j^{-1}}$.

We partition $\{1,...,n\}$ into (at most $3^6$) sets, where each set consists of indices $i$ that supply the same signature.
For each such set $A\subseteq \{1,...,n\}$, the element $$\tau_A=\sum_{i\in A}[h_i,f_i]\oplus \sum_{i\in A^c} [id,f_i]$$
has the following feature.
The element $$\sum_{i\in A}g_i\oplus \sum_{i\in A^c} e$$ can be expressed as a product of at most six elements in $C_{\tau_A}\cup C_{\tau_A^{-1}}$,
hence a product of at most $12$ elements in $C_{f}\cup C_{f^{-1}}$.
Applying the same argument to each of the (at most) $3^6$ sets, we obtain the desired result for $k=3^6\times 12$.
\end{proof}

The following is a key proposition in our proof of uniform simplicity.

\begin{prop}\label{mainpropUS}
Let $f\in G_{\rho}$ be an element that is uniformly stable.
Then $f$ lies in a subgroup $A$ of $G_{\rho}$ that is isomorphic to a finite direct sum of copies of $F'$,
and $f$ is full in $A$.
\end{prop}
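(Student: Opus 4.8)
The plan is to take a uniformly stable element $f \in G_\rho$ and exploit its cellular decomposition, together with the characterisation $G_\rho = K_\rho$ (Theorem \ref{characterisation}) and the structural result Proposition \ref{Fprimefff}, to exhibit the ambient finite direct sum of copies of $F'$. Recall that $f$ being uniformly stable gives, for a suitable $n \in \bN$, finitely many equivalence classes $\z_1, \ldots, \z_m$ of decorated atoms in $\cT_n(f)$ and a corresponding cellular decomposition $f = f_{\z_1} \cdots f_{\z_m}$ (the factors have pairwise disjoint supports, so the product is unambiguous and each $f_{\z_i}$ commutes with the others). By Proposition \ref{prop34general} each $f_{\z_i}$ lies in $G_\rho$. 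Each nonempty atom of $f$ has support contained in the interior of its defining integer-endpoint interval, so passing to a possibly larger $n$ if needed, I would first arrange that each atom of $f$ is in fact compactly supported inside its interval — but more importantly, I want to produce a single open interval fixed by a convenient set of elements.

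The key step is to realise $f$ as a full element of a direct sum coming from Proposition \ref{Fprimefff}. I would proceed as follows. For each equivalence class $\z_i$, the homeomorphism $f_{\z_i}$ is supported on a union of atoms all of which are conjugate (or flip-conjugate) with matching decorating words; using Proposition \ref{SpecialElements2} (which builds elements of $G_\rho$ that act as a prescribed copy of a given $F'$-element on every interval whose decorating word matches a fixed pattern), one sees that the restriction of $f_{\z_i}$ to a single atom, transported into $[0,1]$, is an element $\bar f_i \in F'$, and that $f_{\z_i}$ itself is the "$G_\rho$-diagonal" image of $\bar f_i$. Now I would intersect supports: since $f$ is stable, within every interval of length $\geq n$ there is an integer $k$ with $f$ fixing a neighbourhood of $k$; but to apply Proposition \ref{Fprimefff} I need a \emph{single} open interval $I$ fixed pointwise by $f_{\z_1}, \ldots, f_{\z_m}$ simultaneously. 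This is where a little care is needed: each $f_{\z_i}$ fixes neighbourhoods of all integers (atoms have integer endpoints and $f$ fixes neighbourhoods of the endpoints), so in fact a neighbourhood of any fixed integer, say a small interval around $0$, is pointwise fixed by every $f_{\z_i}$. Hence $f_{\z_1}, \ldots, f_{\z_m}$ all satisfy the hypothesis of Proposition \ref{Fprimefff} with $I$ a small neighbourhood of $0$, so they lie together in a subgroup $A < G_\rho$ isomorphic to a finite direct sum $\bigoplus_{j=1}^{N} F'_j$ of copies of $F'$.

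Finally, I must show $f$ is \emph{full} in $A$, i.e. its image under each coordinate projection of $\bigoplus_{j=1}^N F'_j$ is nontrivial. This requires understanding how $A$ sits inside $G_\rho$: the proof of Proposition \ref{Fprimefff} (equivalently, Proposition 4.16 of \cite{ffflodha}) produces the copies $F'_j$ as groups of elements supported in a $G_\rho$-orbit of some interval, acting "diagonally" with matching decorations. Any element of $A$ is therefore determined by finitely many "local germs" in $F'$, one per diagonal block, and the block decomposition of $A$ refines the cellular decomposition of $f$: each block on which $A$ is allowed to act nontrivially is a union of atoms carrying a single equivalence-class pattern. Since $f$ is nontrivial on every one of its atoms (an atom is by definition a maximal interval on which $f$ moves points near every interior integer, in particular $f$ is nontrivial on it), the image of $f$ in each coordinate $F'_j$ is nontrivial, which is exactly fullness. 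The main obstacle I anticipate is the bookkeeping in this last step: matching the block structure of the subgroup $A$ handed to us by Proposition \ref{Fprimefff} with the equivalence classes of decorated atoms of $f$, and checking that no coordinate of $A$ can "see" two distinct atoms in a way that might allow cancellation — but the disjointness of atom interiors and the fact that $A$ is built from a single $G_\rho$-orbit of intervals should make this clean, possibly after enlarging $n$ so that the decorating words separate all the equivalence classes.
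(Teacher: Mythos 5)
Your first step --- putting the cellular factors $f_{\z_1},\ldots,f_{\z_m}$ into a single subgroup by finding a common pointwise-fixed interval and invoking Proposition \ref{Fprimefff} --- is sound (this is essentially how the paper later proves the variant Proposition \ref{Fprime}). The genuine gap is in the fullness step, and you have in fact put your finger on it yourself without closing it. Proposition \ref{Fprimefff} is a pure existence statement: it hands you \emph{some} subgroup $A \cong \bigoplus_{j=1}^{N} F'_j$ containing the given elements, with no control over $N$, over which intervals each summand is supported on, or over how the summands relate to the atoms of $f$. Nothing in its statement prevents $A$ from having a coordinate whose supporting intervals are disjoint from $\Supp(f)$, in which case $f$ projects trivially there and is not full. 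Your fix --- ``the block decomposition of $A$ refines the cellular decomposition of $f$'' --- is an assertion about the internal construction in the proof of Proposition 4.16 of \cite{ffflodha}, not a consequence of anything stated in this paper, and it is exactly the property one would need to prove. Since the entire point of Proposition \ref{mainpropUS}, as opposed to Proposition \ref{Fprimefff}, is the fullness conclusion, outsourcing that conclusion to the black box is circular.

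The paper avoids this by not using Proposition \ref{Fprimefff} at all: it builds $A$ explicitly and tailored to $f$. Taking $l_f = k_f + \max_\beta |I_\beta|$ and the equivalence classes $\z_1,\ldots,\z_m$ of decorated atoms in $\cT_{l_f}(f)$, it defines one summand of $A$ \emph{per equivalence class}, via the diagonal embedding $\phi:\bigoplus_{i=1}^m F' \to \mathrm{Homeo}^+(\bR)$ that makes $g_i$ act as $\phi_i(g_i)$ (or its flip) on every atom in class $\z_i$; membership of $\im(\phi)$ in $G_\rho$ is checked directly against Definition \ref{Krho} with the uniform constant $l_f$ (this is where the padding by $k_f$ matters), and fullness of $f = \phi(\phi_1^{-1}(f_{\z_1}),\ldots,\phi_m^{-1}(f_{\z_m}))$ is then immediate because each $f_{\z_i}$ is nontrivial. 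If you want to salvage your route, you would have to reproduce essentially this construction inside your appeal to Proposition \ref{Fprimefff}, at which point you are no longer using it as a black box.
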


We shall need the following, which is an immediate consequence of quasi-periodicity and the definition of the global definition of the 
groups $G_{\rho}$ in Definition \ref{Krho}.

\begin{lem}\label{uniformlystable}
Let $\rho$ be a quasi-periodic labelling.
Let $f\in G_{\rho}$ be an element that fixes a neighborhood of $0$ pointwise.
Then $f$ is uniformly stable.
\end{lem}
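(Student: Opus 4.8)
The plan is to verify that $f$ is \emph{stable} and then invoke Lemma \ref{periodicuniformlystable}\,--- wait, no: Lemma \ref{periodicuniformlystable} is for periodic labellings, whereas here $\rho$ is quasi-periodic. So instead I would directly check the definition of uniform stability using quasi-periodicity. First I would observe that since $f \in G_\rho = K_\rho$ (Theorem \ref{characterisation}), $f$ satisfies condition (3) of Definition \ref{Krho} with some constant $k_f \in \mathbf{N}$. The key point is that $f$ fixes a neighborhood of $0$ pointwise, hence $0 - 0\cdot f = 0$; by condition (3.a), for every $x$ with $x \in \mathbf{Z}$ and $\mathcal{W}(x, k_f) = \mathcal{W}(0, k_f)$ we get $x - x\cdot f = 0$, i.e.\ $f$ fixes a neighborhood of every such integer $x$ as well. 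By quasi-periodicity (axiom (1) in the definition of quasi-periodic), the word $\mathcal{W}(0, k_f)$ occurs as a subword of every sufficiently long subword of $\rho$, so there is an $N$ such that every interval of length $N$ contains an integer $x$ with $\mathcal{W}(x, k_f) = \mathcal{W}(0, k_f)$, and hence such that $f$ fixes a neighborhood of $x$. This is exactly the definition of $f$ being \emph{stable}.

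Next I would establish uniform stability: that for each $n \in \mathbf{N}$ there are only finitely many equivalence classes of decorated atoms $(I_\alpha, n)$. The point is that condition (3) of Definition \ref{Krho} controls $f$ entirely in terms of the combinatorial data of $\rho$ near a given location. Concretely, if two atoms $I_\alpha = [m_1, m_2]$ and $I_\beta = [m_3, m_4]$ of $f$ have the same length and satisfy $\mathcal{W}(I_\alpha, k_f + n) = \mathcal{W}(I_\beta, k_f + n)$, then conditions (3.a) (applied repeatedly, translating $[m_1,m_2]$ onto $[m_3,m_4]$ by the integer $m_3 - m_1$) force $f\restriction I_\alpha$ to be the conjugate of $f\restriction I_\beta$ by that translation, so the decorated atoms are equivalent (using clause (1) of the equivalence relation, since $\mathcal{W}(I_\alpha, n) = \mathcal{W}(I_\beta, n)$ follows from the longer words agreeing); the flip case is handled analogously via (3.b), giving flip-conjugacy and clause (2). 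Therefore the equivalence class of $(I_\alpha, n)$ is determined by the length $|I_\alpha|$ together with the finite word $\mathcal{W}(I_\alpha, k_f + n)$. Since $f$ is stable with stability constant $N$, every atom has length at most $N$ (an atom cannot contain an integer near which $f$ is locally identity), so there are boundedly many possible lengths, and for each length only finitely many words of the relevant fixed length in the four-letter alphabet; hence finitely many equivalence classes, i.e.\ $f$ is uniformly stable.

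The main obstacle I expect is the bookkeeping in the second step: being careful that condition (3.a)/(3.b) really pins down the \emph{entire} restriction $f \restriction I_\alpha$ (not just displacements at finitely many integers) once the surrounding word of length governed by $k_f$ is fixed. This is where one uses that $f$ is piecewise linear with dyadic breakpoints and powers-of-$2$ slopes (conditions (1)--(2) of Definition \ref{Krho}): matching the displacement function $x \mapsto x - x\cdot f$ on a dense set of relevant reference points, combined with the rigidity of such piecewise-linear maps, forces the restrictions to coincide. One should also double-check the bound $|I_\alpha| \le N$: by definition of an atom, every integer strictly inside $I_\alpha$ has points arbitrarily close to it that are moved by $f$, whereas stability with constant $N$ guarantees an integer near which $f$ is the identity in every interval of length $N$, so an atom of length $\ge N$ would contain such an integer, a contradiction. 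Apart from these points, the argument is a routine unpacking of the definitions.
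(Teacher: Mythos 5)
Your argument is correct and is exactly the unpacking the paper has in mind: the paper offers no proof, declaring the lemma ``an immediate consequence of quasi-periodicity and the definition'' (Definition \ref{Krho}), and your two steps --- stability via axiom (1) of quasi-periodicity applied to the word of radius $k_f$ around $0$, then finiteness of equivalence classes because the class of a decorated atom of bounded length is determined by its length together with the surrounding word of radius roughly $\max(n,k_f)$ --- supply precisely that, mirroring the choice $l_f=k_f+l$ made later in the proof of Proposition \ref{mainpropUS}. The only bookkeeping to tighten is the one you already flag: enlarge the word radius by a couple of letters so that matching words around an integer (resp.\ around an atom) force matching words $\mathcal{W}(\cdot,k_f)$ at \emph{every} point of a unit neighborhood (resp.\ of the atom); after that, (3.a)/(3.b) determine the restriction pointwise, with no need to invoke piecewise-linear rigidity.
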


\begin{proof}[Proof of Proposition \ref{mainpropUS}]
We shall provide an explicit description of the group $A$.
Let $f\in G_{\rho}$ be uniformly stable.
Let $\{I_{\beta}\mid \beta\in P\}$ be the set of atoms of $f$.
Recall the definition of the constant $k_f\in \mathbf{N}$ from Definition \ref{Krho}.
(Since Definition \ref{Krho} provides a characterisation of elements of $G_{\rho}$,
it supplies such a constant for each element of $G_{\rho}$).
We denote by $l_f=k_f+l$, where $l=\max\{|I_{\beta}|\mid \beta\in P\}$. 
Note that since $f$ is uniformly stable, the number of decorated atoms $\mathcal{T}(f)$ is finite for any $n\in \mathbf{N}$,
so this quantity is defined.

Let the cellular decomposition of $f$ as decorated atoms $\mathcal{T}_{l_f}(f)$ be $f_{\zeta_1},...,f_{\zeta_m}$.
Here we represent the equivalence classes of decorated atoms in $\mathcal{T}_{l_f}(f)$ as $\zeta_1,...,\zeta_m$.
For each $1\leq i\leq m$, set $L_i=|I_{\alpha}|$ where $(I_{\alpha},l_f)\in \zeta_i$.
(Recall that $|I_{\alpha}|=|I_{\beta}|$ whenever $(I_{\alpha},l_f),(I_{\beta},l_f)\in \zeta_i$.)
For each $1\leq i\leq m$, define the canonical isomorphism $$\phi_i:F'\to F_{[0,L_i]}'$$
where $F_{[0,L_i]}$ is the standard copy of $F$ supported on the interval $[0,L_i]$.

For each $1\leq i\leq m$, we have $$\{\mathcal{W}(I_{\alpha},l_f)\mid (I_{\alpha},l_f)\in \zeta_i\}=\{W_i,W^{-1}_i\}$$
for some words $W_1,...,W_m$.
Define a map $$\phi: \bigoplus_{1\leq i\leq m}F'\to \textup{Homeo}^+(\mathbf{R})$$ as follows. 
For $\alpha\in P$ and $1\leq i\leq m$: $$\phi(g_1,...,g_m)\restriction I_{\alpha}\cong_T \phi_i(g_i)\qquad \text{ if }(I_{\alpha},l_f)\in \zeta_i\text{ and }\mathcal{W}(I_{\alpha},l_f)=W_i$$
$$\phi(g_1,...,g_m)\restriction I_{\alpha}\cong_T \iota_{L_i}\circ\phi_i(g_i)\circ \iota_{L_i}\qquad \text{ if }(I_{\alpha},l_f)\in \zeta_i\text{ and }\mathcal{W}(I_{\alpha},l_f)=W^{-1}_i$$
where $\iota_{L_i}:[0,L_i]\to [0,L_i]$ is the unique orientation reversing isometry. 
It is easy to check that this is an injective group homomorphism, since the image of each element satisfies Definition \ref{Krho} with a uniform constant $l_f$.
It is easy to see that the image of $\phi$ contains $f$, which is equal to $\phi (\phi_1^{-1}(f_{\zeta_1}), \ldots, \phi_m^{-1}(f_{\zeta_m}))$,
and hence is full in $A$.
\end{proof}

We shall also need the following Proposition which is a small variation of Proposition \ref{Fprimefff}, which follows from the above.

\begin{prop}\label{Fprime}
Let $\rho$ be a quasi-periodic labelling.
Let $f,g\in G_{\rho}$ be elements with the following property. There is a compact interval $I$ such that both $f,g$ fix each point in $I$.
Then there is a subgroup $A<G_{\rho}$ with the following properties.
\begin{enumerate}
\item $A$ is isomorphic to a finite direct sum of copies of $F'$.
\item $f,g\in A$ and are both full in $A$.
\end{enumerate}
\end{prop}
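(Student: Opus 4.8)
The plan is to reduce Proposition \ref{Fprime} to Proposition \ref{mainpropUS} by first arranging that both $f$ and $g$ fix a neighborhood of $0$, then conjugating the whole situation back to a standard position. First I would use the hypothesis that $f,g$ both pointwise fix a compact interval $I$ with, say, $I \subseteq (p,p+1)$ for some integer $p$ after possibly shrinking $I$. By Lemma \ref{dyadictozero} (applied with $\s = \rho$) together with transitivity of the $G_\rho$-action on dyadic rationals --- or more directly by minimality of the $G_\rho$-action on $\bR$ established in the proof of Lemma \ref{dyadictozero} --- I can find $c \in G_\rho$ moving a dyadic point of the interior of $I$ arbitrarily close to $0$; combining with an element of $\pi(F')$ supported near $0$ as in the proof of Lemma \ref{dyadictozero}, I can in fact find $c \in G_\rho$ such that $c^{-1} I c$ contains a neighborhood of $0$. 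Then $c^{-1} f c$ and $c^{-1} g c$ both fix a neighborhood of $0$ pointwise.

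Next I would invoke Lemma \ref{uniformlystable}: since $c^{-1}fc$ and $c^{-1}gc$ each fix a neighborhood of $0$ pointwise, they are each uniformly stable elements of $G_\rho$. The key point here is to produce a \emph{single} subgroup $A_0 \cong \bigoplus_i F'$ containing \emph{both} of them as full elements, rather than applying Proposition \ref{mainpropUS} separately to each. To do this I would run the construction in the proof of Proposition \ref{mainpropUS} on the pair simultaneously: the element $h = (c^{-1}fc)(c^{-1}gc)$, or more safely the stable homeomorphism whose atom decomposition refines those of both $c^{-1}fc$ and $c^{-1}gc$, is again uniformly stable by Lemma \ref{uniformlystable}, and its cellular decomposition gives a common system of atoms and decoration-classes with respect to which both $c^{-1}fc$ and $c^{-1}gc$ are determined coordinate-by-coordinate. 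Feeding the same constant $l_h$ and the same list of words $W_1,\dots,W_m$ into the map $\phi$ of that proof yields an injective homomorphism $\phi \colon \bigoplus_{1\le i\le m} F' \to G_\rho$ whose image $A_0$ contains both $c^{-1}fc$ and $c^{-1}gc$. One then has to check that both are full in $A_0$: since each atom of $h$ is an atom of $f$ or of $g$ on which the respective element is nontrivial, after possibly discarding the classes on which one of them happens to be trivial (equivalently, by choosing $h$ so that $f$ and $g$ have the same atoms --- which one arranges by slightly enlarging each element's support inside the fixed region, exactly as in the proof of Lemma \ref{directsum} when one replaces $f_i$ by $[h_i,f_i]$), fullness of both is automatic.

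Finally, set $A = c A_0 c^{-1} < G_\rho$. Conjugation by $c$ is an automorphism of $G_\rho$, so $A$ is again isomorphic to $\bigoplus_{1\le i\le m} F'$, and it contains $f = c(c^{-1}fc)c^{-1}$ and $g = c(c^{-1}gc)c^{-1}$, which are full in $A$ because $c^{-1}fc, c^{-1}gc$ are full in $A_0$ and fullness is preserved under the coordinate-respecting isomorphism induced by $\mathrm{Ad}(c)$. This gives both conclusions (1) and (2).

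The main obstacle I anticipate is the bookkeeping needed to guarantee that $f$ and $g$ are simultaneously full in the \emph{same} copy of $\bigoplus F'$: Proposition \ref{mainpropUS} as stated only handles one element, and naively taking the product $fg$ can create cancellation on some atoms, so one must either argue that such atoms can be excluded (restricting to the sub-direct-sum over the relevant coordinates) or perturb $f$ and $g$ within their fixed-point regions so that their atom decompositions coincide and neither is trivial on any shared atom. Handling the case where $\mathrm{Supp}(f)$ and $\mathrm{Supp}(g)$ have different ``shapes'' of atoms --- so that a common refinement is needed and the words $W_i$ attached to the refined atoms must be recomputed --- is the one genuinely technical point; everything else is a routine transport-of-structure argument along the conjugation by $c$.
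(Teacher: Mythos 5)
Your proposal follows essentially the same route as the paper: conjugate by an element (obtained from minimality of the $G_\rho$-action) so that both elements pointwise fix a neighborhood of $0$, invoke Lemma \ref{uniformlystable} to get uniform stability, run the construction from the proof of Proposition \ref{mainpropUS} on a common refinement of the two cellular decompositions (allowing atoms to contain integer points with pointwise-fixed neighborhoods), and transport back by conjugation. Your instinct to avoid the product $fg$ (because of cancellation) in favor of the common refinement is exactly the choice the paper makes, and the fullness bookkeeping you flag is the same point the paper handles by perturbing the constructions of the two subgroups $A_1,A_2$ into a single common one.
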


\begin{proof}
Using the minimality of the action of $G_{\rho}$, we can find an element $h\in G_{\rho}$ such that $h^{-1} fh,h^{-1}gh$ pointwise fix a neighborhood of $0$.
It follows from Lemma \ref{uniformlystable} that these elements are uniformly stable.
Finally, the proof of Proposition \ref{mainpropUS} above applies to $h^{-1} fh,h^{-1}gh$.
Indeed, they live in the subgroups $A_1,A_2$ of $G_{\rho}$ that are supplied by this proof.
We can perturb the constructions of these subgroups by considering a common refinement of the cellular decompositions of both elements,
in which the atoms are allowed to have integer points whose neighborhoods are pointwise fixed,
to obtain a subgroup $A$ with the desired properties.
\end{proof}

\begin{proof}[Proof of Theorem \ref{UniformlySimpleMainTheorem}]

We will show that there is an $m\in \mathbf{N}$ such that given $\alpha,\beta\in G_{\rho}\setminus \{e\}$, $\alpha$ is a product of at most $m$ elements in $C_{\beta}\cup C_{\beta^{-1}}$.
The proof shall be done in two steps:
\begin{enumerate}

\item[(Step 1)] 
We find elements $\nu_1,\nu_2,\nu_3,\nu_4\in G_{\rho}$ such that $\nu_1\nu_2=\alpha$
and the elements $\alpha_1= \nu_3^{-1}\nu_1\nu_3$ and $\alpha_2=\nu_4^{-1}\nu_2\nu_4$ pointwise fix a nonempty open neighborhood of $0$.
It follows from Proposition \ref{Fprime} that $\alpha_1,\alpha_2$ lie in subgroups $A_1,A_2$ of $G_{\rho}$ that are both isomorphic to a finite direct sum of copies of $F'$,
and $\alpha_1,\alpha_2$ are full in $A_1,A_2$ respectively.

\item[(Step 2)] We construct an element $\gamma\in G_{\rho}$ such that $\beta_1=\gamma^{-1} \beta\gamma$ satisfies that $V\cdot \beta_1\cap V=\emptyset$, where $V$ is the set 
\[
V=\bigcup_{n\in \tfrac{1}{2}\mathbf{Z}\setminus \mathbf{Z}}(n-\frac{1}{16},n+\frac{1}{16})
\]
It follows that for any triple of nontrivial symmetric elements $f_1,f_2,f_3\in \mathcal{K}$ such that: $$\Supp(f_1),\Supp(f_2),\Supp(f_3)\subset V \text{ and } [f_1,f_2]=f_3,$$
we have that $[[f_1, \beta_1^{-1}],f_2]=f_3\in \mathcal{K}$.
We end by observing that any symmetric element of $\mathcal{K}$ lies in $A_1\cap A_2$, and moreover, is full in $A_1,A_2$ (the groups from Step $1$).
It follows that $f_3 \in A_1\cap A_2$ 
and $f_3$ is full in $A_1,A_2$.

\end{enumerate}

{\bf Conclusion}: 
Combining the above steps, and applying Lemma \ref{directsum}, 
we obtain that $\alpha_1,\alpha_2$ are products of at most $k$ elements in $C_{f_3}\cup C_{f_3^{-1}}$,
where $k$ is the constant from Lemma \ref{directsum}.  
Therefore, $\alpha_1,\alpha_2$ are products of at most $4k$ elements in $C_{\beta}\cup C_{\beta^{-1}}$.
We conclude that $\alpha$ is a product of at most $8k$ elements in $C_{\beta}\cup C_{\beta^{-1}}$.\\

{\bf Proof of step 1}: Recall that each element in $G_{\rho}$ admits fixed points in $\mathbf{R}$ (see part $(2)$ of Lemma $5.1$ in \cite{HydeLodha}).
Let $x\in \mathbf{R}$ be a fixed point of $\alpha$.
We assume without loss of generality that $x\in \bigcup_{n\in \mathbf{Z}} (n,n+1)$.
The case when $x\in \bigcup_{n\in \mathbf{Z}} (n-\frac{1}{2},n+\frac{1}{2})$ is similar and uses an element in the group $\mathcal{L}$ instead of $\mathcal{K}$ in the argument below.

We can find a symmetric element $f \in \mathcal{K}$ such that:
\begin{enumerate}
\item $\overline{\Supp(f)}\subset \bigcup_{n\in \mathbf{Z}} (n,n+1)$, and hence $f$ is uniformly stable.
\item $x\cdot f=x$ and the right derivatives of $f,\alpha$ at $x$ coincide.
\end{enumerate}
Let $\nu_1=f$ and $\nu_2=f^{-1} \alpha$.
Note that both $\nu_1,\nu_2$ are uniformly stable.
Let $x_1,x_2\in \mathbf{R}, \epsilon >0$ be such that for each $1\leq i\leq 2$ the element $\nu_i$ fixes the interval $(x_i-\epsilon, x_i+\epsilon)$ pointwise. 
Using minimality of the action of $G_{\rho}$ on $\mathbf{R}$, we find elements $\nu_3,\nu_4\in G_{\rho}$ that satisfy 
$$0\cdot \nu_3^{-1} \in (x_1-\epsilon, x_1+\epsilon)\qquad 0\cdot \nu_4^{-1} \in (x_2-\epsilon, x_2+\epsilon)$$
It follows that $\nu_1,\nu_2,\nu_3,\nu_4$ are the required elements.\\

{\bf Proof of step 2}:
We shall need the following Lemma.

\begin{lem}\label{Lem1Step2}
There is an $\epsilon>0$, $p\in \mathbf{N}$ and an infinite discrete set $X\subset \mathbf{R}$ 
such that for each $x\in X$ and $U_x=(x-\epsilon,x+\epsilon)$, we have:
\begin{enumerate}
\item $U_x\cup U_x\cdot \beta\subset [n,n+1]$ for some $n\in \mathbf{Z}$ and $U_x\cap U_x\cdot \beta=\emptyset$.
\item For each interval $[n,n+p]$ for $n\in p\mathbf{Z}$, $[n,n+p]\cap X\neq \emptyset$.
\item For any pair $n_1,n_2\in p\mathbf{Z}$,
if $$\mathcal{W}([n_1,n_1+p],1)=\mathcal{W}([n_2,n_2+p],1)$$
then there are $x_1,x_2\in X$ such that $U_{x_i}\subset [n_i,n_i+p]$ and
$T(U_{x_1})=U_{x_2}$
where $T:[n_1,n_1+p]\to [n_2,n_2+p]$ is the orientation preserving isometry.
\end{enumerate}
\end{lem}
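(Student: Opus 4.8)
The plan is to produce one good configuration for $\beta$ inside a single unit interval and then copy it across $\bR$, using the recurrence of subwords in a quasi-periodic labelling, in such a way that within each long block the set $X$ is determined by the radius-$1$ label word of that block alone. For the local configuration, recall $\beta \neq e$, so by Lemma \ref{transition} $\beta$ has a transition point $x_0$; it is an endpoint of some connected component of $\Supp(\beta)$, say a left endpoint (the right-endpoint case is the mirror image). Since $\beta$ is piecewise linear and fixes $x_0$ while moving points immediately to its right, its slope at $x_0^+$ is a power of $2$ distinct from $1$, hence at least $2$ or at most $1/2$. Setting $z_0 = x_0 + t$ and $\e = t/4$ for small $t>0$, this slope estimate gives at once that $U_{z_0}$ and $U_{z_0}\cdot\beta$ are disjoint, and for $t$ small enough both lie in $[m, m+1]$ with $z_0 \in (m, m+1)$, where $m = \lfloor x_0 \rfloor$. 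This fixes $\e$.

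Next, by Theorem \ref{characterisation}, $\beta \in K_\rho$, so Definition \ref{Krho} supplies a constant $k_\beta$. Choose $N \geq k_\beta$ of the parity for which $W^\ast := \cW([m, m+1], N)$ begins with a letter in $\{a, a^{-1}\}$, so that every occurrence of $W^\ast$ in $\rho$ sits at an integer position. Condition (3.a) of Definition \ref{Krho} then shows that whenever $j \in \bZ$ satisfies $\cW([j, j+1], N) = W^\ast$, the restriction $\restr{\beta}{[j,j+1]}$ is the translate by $j - m$ of $\restr{\beta}{[m,m+1]}$, so $x := z_0 + (j - m)$ also has $U_x \cap U_x\cdot\beta = \es$ and $U_x \cup U_x\cdot\beta \subseteq [j, j+1]$. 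By the first quasi-periodicity axiom applied to the block with label word $W^\ast$, there is $L \in \bN$ such that every block of size $\geq L$ contains $W^\ast$ as a subword. Fix a multiple of $4$, $p \in \bN$, that is much larger than $L + N$.

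For each $n \in p\bZ$ let $j(n)$ be the least integer $j$ with $n + p/4 \leq j \leq n + p/2$ and $\cW([j, j+1], N) = W^\ast$; this set is nonempty for $p$ large (it contains a full copy of $W^\ast$ by the choice of $L$), and $[j(n), j(n)+1] \subseteq [n, n+p]$. Put $X = \{\, z_0 + (j(n) - m) : n \in p\bZ \,\}$, an infinite discrete set. Item~(1) holds by the previous paragraph, with ambient unit interval $[j(n), j(n)+1]$; item~(2) is immediate since $z_0 + (j(n)-m) \in (j(n), j(n)+1)$. For item~(3), the point is that $j(n) - n$ depends only on $\cW([n, n+p], 1)$: for $j$ in the central range $n + p/4 \leq j \leq n + p/2$, and $p$ large relative to $N$, the word $\cW([j, j+1], N)$ is read off $\rho$ restricted to a subinterval of $[n, n+p]$, hence is recorded by $\cW([n, n+p], 1)$. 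Therefore, if $\cW([n_1, n_1+p], 1) = \cW([n_2, n_2+p], 1)$, the two defining sets for $j(n_1)$ and $j(n_2)$ are translates of one another by $n_2 - n_1$, so $j(n_2) - n_2 = j(n_1) - n_1$ and the points $x_i := z_0 + (j(n_i) - m) \in X$ satisfy $x_2 - x_1 = n_2 - n_1$. Since the orientation-preserving isometry $T \colon [n_1, n_1+p] \to [n_2, n_2+p]$ is the translation by $n_2 - n_1$, we get $T(U_{x_1}) = U_{x_2}$ with $U_{x_i} \subseteq [n_i, n_i + p]$, as required.

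The only genuinely delicate step is the last one: one must force the selected occurrence of $W^\ast$ into the deep interior of each $p$-block, and keep track of the parity of $N$, so that agreement of the radius-$1$ label words already forces agreement of the radius-$N$ data used to locate $j(n)$. The construction of the local configuration and the verification of items~(1)--(2) are routine.
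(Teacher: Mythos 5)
Your proof is correct and follows essentially the same strategy as the paper's: build a local configuration $U_x$ near a transition point of $\beta$ (using piecewise linearity with slopes in powers of $2$ to get disjointness of $U_x$ and $U_x\cdot\beta$ with a fixed $\epsilon$), then propagate it via condition (3.a) of Definition \ref{Krho} together with quasi-periodic recurrence so that the selected points depend only on the label word of each $p$-block. The only cosmetic difference is that you canonically select one point per $p$-block from a single propagated configuration, whereas the paper places a point near every transition point and then argues uniformity of $\epsilon$ and coherence afterwards; your version actually makes the ``straightforward'' steps of the paper's sketch explicit.
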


\begin{proof}
Recall that there is a $p_1\in \mathbf{N}$ such that $\beta$ admits a fixed point $y$, which is also a transition point, in any compact interval of length at least $p_1$.
(This follows from the proof of part $(2)$ of Lemma $5.1$ in \cite{HydeLodha}, and also directly from the definition of the group $G_{\rho}$ using 
the fact that $\rho$ is quasi-periodic).
Next, recall the constant $k_{\beta}$ from Definition \ref{Kset}.
We choose $p=\textup{max}\{p_1,k_{\beta}\}$.
 
Let $n\in \mathbf{Z}$ be such that there is a transition point $y$ of $\beta$ that lies in $[n,n+1]$ and has a nontrivial germ in $[n,n+1]$.
Then since $\beta$ is a homeomorphism, we can find suitable $x$ and $\epsilon$ such that $(x-\epsilon,x+\epsilon)$ has the property that: $$(x-\epsilon,x+\epsilon)\cdot \beta \cap (x-\epsilon,x+\epsilon)=\emptyset\qquad (x-\epsilon,x+\epsilon)\cdot \beta \cup (x-\epsilon,x+\epsilon)\subset [n,n+1]$$

So far we have chosen $x,\epsilon$ for each such transition point.
Our goal now is to make the choices of such points $x$ satisfy condition $(3)$ and the choice of $\epsilon$ uniform over all such $x$.
The former is a straightforward application of Definition \ref{Kset}. The choice of $\epsilon$ above can be made uniform over all such transition points $y$, as follows. 
Using quasi-periodicity of the labelling, one observes the following. For an element of $G_{\rho}$,
the set of slopes, wherever they exist, is a finite subset of $\{2^n : n\in \mathbf{Z}\}$.
This makes it possible for us to choose such an $\epsilon$.
The collection of such points $x$ is then the set $X$, and has the required properties.
\end{proof}

Now we shall proceed to finish the proof of Step $2$.
Let $U=\bigcup_{x\in X}U_x=\bigcup_{x\in X}(x-\epsilon,x+\epsilon)$ be the set from the previous Lemma.
Recall that 
\[
V=\bigcup_{n\in \tfrac{1}{2}\mathbf{Z}\setminus \mathbf{Z}}(n-\frac{1}{16},n+\frac{1}{16})
\]
Our goal is to produce an element $\gamma\in G_{\rho}$ such that $V\cdot \gamma^{-1}\subset U$.
Note that this implies that $$V\cdot (\gamma^{-1} \beta \gamma) \cap V=\emptyset$$
thereby finishing the proof of step $2$.

For each pair consisting of a compact interval $[n,n+p]$ and the word ${\mathcal{W}([n,n+p],1)}$,
we choose a special element supplied by Proposition \ref{SpecialElements2} whose inverse maps $V\cap [n,n+p]$ inside $U\cap [n,n+p]$.
A product of finitely many such special elements is then the required $\gamma$.
Finally, the fact that $\mathcal{K}< A_1\cap A_2$, and is full in both $A_1,A_2$, follows from the explicit definitions of $A_1,A_2$ supplied by the proof of Proposition \ref{mainpropUS}.




\end{proof}

\section{Property \textup{(FA)}}

In this section we shall prove that the groups $G_{\rho}$ have Serre's property $\textup{(FA)}$.
That is, every action of $G_{\rho}$ on a simplicial tree has a fixed point.
The following is stated as Corollary $2$ on page $64$ in \cite{serretrees}.

\begin{prop}\label{TreeProp1}
Let $G$ be a group action on a simplicial tree by automorphisms.
Assume that $G$ is generated by a finite set of elements $s_1,...,s_m$ such that each $s_i$ and $s_is_j$ for $i,j\in \{1,...,m\}$ admit fixed points.
Then $G$ admits a global fixed point.
\end{prop}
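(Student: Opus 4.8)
The plan is to reduce the statement to the Helly property for subtrees of a tree. First I would pass to the barycentric subdivision of the tree, so that $G$ acts without inversions and ``admits a fixed point'' means ``fixes a vertex''; this affects neither the hypotheses nor the conclusion. For an elliptic $g$, the fixed set $\Fix(g)$ is then a non-empty subtree, being connected because $g$ fixing two vertices forces it to fix the geodesic between them. So it suffices to prove that $\bigcap_{i=1}^m \Fix(s_i)\neq\emptyset$: any vertex in this intersection is fixed by all the generators, hence by every element of $G=\langle s_1,\dots,s_m\rangle$.

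The key input is a lemma of Serre (see \cite{serretrees}): if $a$ and $b$ are elliptic automorphisms of a tree and $ab$ is elliptic, then $\Fix(a)\cap\Fix(b)\neq\emptyset$. Indeed, suppose $\Fix(a)\cap\Fix(b)=\emptyset$ and let $[p,q]$ be the bridge between these subtrees, so that $p\in\Fix(a)$, $q\in\Fix(b)$, $d(p,q)=\delta>0$, and $[p,q]$ meets $\Fix(a)$ only in $p$ and $\Fix(b)$ only in $q$. Since $[p,q]$ meets $\Fix(a)$ only in $p$, the initial edge of $[p,q]$ at $p$ is moved by $a$, so the geodesic $[q,(ab)q]=[q,aq]$ actually passes through $p$ and equals $[q,p]\cup[p,aq]$, of length $2\delta$; symmetrically $b$ moves the edge of $[p,q]$ at $q$, and a short check then shows that the $(ab)$-translates of this segment fit together without backtracking into a bi-infinite geodesic on which $ab$ acts by a translation of length $2\delta$. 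Thus $ab$ is hyperbolic, contradicting that it is elliptic. Applying this to each pair $(s_i,s_j)$ — legitimate precisely because $s_i$, $s_j$ and $s_is_j$ are all assumed elliptic — yields $\Fix(s_i)\cap\Fix(s_j)\neq\emptyset$ for all $i,j$.

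It remains to invoke the Helly property: a finite family of pairwise-intersecting subtrees of a tree has a common point. For three subtrees this is the median argument: picking $x_{jk}\in\Fix(s_j)\cap\Fix(s_k)$, the median of $x_{12},x_{13},x_{23}$ lies on each of the geodesics $[x_{jk},x_{jl}]$ and hence in all three of $\Fix(s_1),\Fix(s_2),\Fix(s_3)$. The general case follows by induction on the number of subtrees, replacing the pair $\Fix(s_{m-1}),\Fix(s_m)$ by their intersection, which is again a non-empty subtree and, by the three-subtree case, still meets each $\Fix(s_i)$. This gives $\bigcap_{i=1}^m\Fix(s_i)\neq\emptyset$, completing the proof. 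The only genuinely substantial step is the elliptic-times-elliptic lemma of the second paragraph; the reduction to inversion-free actions and the Helly property are routine.
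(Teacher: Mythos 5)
Your proof is correct and complete. The paper does not actually prove this proposition---it simply cites it as Corollary 2 on page 64 of Serre's \emph{Trees}---and your argument (the elliptic-times-elliptic lemma giving pairwise intersections of fixed subtrees, followed by the Helly property via medians) is precisely the standard proof of that cited result, so there is nothing to compare beyond noting that you have supplied the details the paper outsources.
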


Our goal in this section is to construct a finite generating set for $G_{\rho}$ that satisfies the conditions of the previous Proposition.
We shall need the following Lemma.

\begin{lem}\label{TreeLem1}
Let $G=\bigoplus_{1\leq i\leq n}F'$ for some $n\in \mathbf{N}$.
For each action of $G$ on a simplicial tree by automorphisms, each non trivial element of $G$ admits a fixed point.
\end{lem}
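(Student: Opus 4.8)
The statement is that for $G = \bigoplus_{1\leq i\leq n} F'$, any action on a simplicial tree fixes each nontrivial element, i.e.\ each $1 \neq g \in G$ admits a fixed point. The key structural fact about $F'$ I would exploit is that $F'$ is simple and \emph{perfect}, but even more useful: $F'$ has no subgroups of finite index (being simple and infinite, its only finite-index subgroup is itself), and crucially every element of $F'$ is a commutator, indeed $F'$ has \emph{commutator width} bounded (or at least: $F'$ is perfect and uniformly perfect by Theorem \ref{USF}). The plan is to use the standard Bass--Serre dichotomy: an element of a group acting on a tree is either \emph{elliptic} (fixes a point) or \emph{hyperbolic} (has an axis on which it translates by a positive amount), and the translation length $\ell$ is a homogeneous quasimorphism-free invariant — in fact $\ell(gh) \leq \ell(g) + \ell(h)$ and $\ell(g^n) = |n|\ell(g)$, and $\ell$ is conjugation-invariant.

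\textbf{Key steps.} First I would reduce to showing each \emph{direct factor} $F_i' \leq G$ acts with every element elliptic, since an element of $G$ is a finite product of elements from the factors and ellipticity is not automatically preserved by products — so actually the cleaner route is: show the \emph{whole group $G$} fixes a point, which immediately gives that every element is elliptic. To show $G$ fixes a point, use that $G$ is generated by the $F_i'$, each of which is perfect; a perfect group acting on a tree cannot act with a hyperbolic element alone (no homomorphism to $\bZ$) but can still act without a global fixed point. However, the decisive input is that $F'$ (hence each $F_i'$, hence their commuting union) is \emph{boundedly generated by conjugates} / uniformly perfect: by Theorem \ref{USF}, $F'$ is $6$-uniformly simple, so every element is a product of boundedly many conjugates of any fixed nontrivial element. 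Combined with the fact that the translation length function is conjugation invariant and subadditive, if some $g \in F_i'$ were hyperbolic with $\ell(g) = L > 0$, then taking $h \in F_i'$ with $\ell(h)$ minimal positive (or $\ell(h) = 0$), uniform simplicity forces a contradiction: pick $h$ elliptic nontrivial (exists, e.g.\ a conjugate of $g$ composed appropriately, or just use that $F'$ has elements of every "type") — more robustly, since $F'$ is perfect, $g$ is a product of commutators $\prod [a_j,b_j]$, and a commutator in a group acting on a tree has bounded translation length only if ... this is where I must be careful.

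\textbf{The main obstacle.} The real difficulty is ruling out that $F'$ (or $G$) acts on a tree with a hyperbolic element; perfectness alone does \emph{not} forbid this. The clean resolution: $F'$ acts on any tree with a \emph{global fixed point} because $F'$ has property (FA) — and this follows because $F'$ is not an amalgam, has no $\bZ$ quotient (perfect), and is \emph{not a non-trivial ascending union} in the relevant sense; but actually the slickest argument I would use is that $F'$ is generated by finitely many elements each of which together with pairwise products is elliptic — wait, $F'$ is infinitely generated. So instead I would invoke: $F'$ is a \emph{simple} group, and any simple group acting on a tree either fixes a point or acts freely-ish, but a simple group with (FA)-failure would surject onto $\bZ$ or split as an amalgam $A *_C B$ with $C \neq A, B$; simplicity forbids proper quotients, and an amalgam decomposition of a simple group is impossible unless trivial (the normal core argument). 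Hence $F'$ has property (FA). Then $G = \bigoplus F_i'$: each $F_i'$ fixes a nonempty subtree $T_i$ of fixed points; since the $F_i'$ commute, $F_j'$ preserves $T_i$, so $G$ preserves $T_i$, and by Helly's property for trees (a finite family of pairwise-intersecting subtrees has common intersection — and commuting FA subgroups have intersecting fixed-point sets) $\bigcap_i T_i \neq \emptyset$ is fixed by all of $G$. Therefore every $1 \neq g \in G$ fixes this common point. The step I expect to be genuinely delicate is establishing property (FA) for $F'$ itself — I would either cite the known fact that $F$ (hence $F'$) has (FA) (Thompson's group $F$ has (FA) since it has finite abelianization-free quotient... actually $F/F' \cong \bZ^2$ so $F$ does \emph{not} have (FA); but $F'$ is perfect so it might) or prove it directly via: $F'$ is uniformly perfect (Theorem \ref{USF}), and a uniformly perfect group has no unbounded quasimorphisms, so in particular every element has zero stable translation length in any tree action, forcing ellipticity of every element, and then a commuting/Helly argument upgrades pointwise ellipticity to a global fixed point using finite generation of any pair — hmm, but $F'$ is not finitely generated, so I'd use that $F'$ is a directed union of copies of $F'$ on subintervals... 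The safest writeup: invoke that a group with no nontrivial homogeneous quasimorphism and no index-$2$ subgroup (so in particular a uniformly perfect group) acting on a tree has every element elliptic, then note $F'$ is generated by its finitely-generated subgroups $F'_{[a,b]}$ each of which, being again isomorphic to $F'$ and uniformly perfect, fixes a point, and these fixed point sets are nested/intersecting, yielding the global fixed point for $F'$ and thence for $G$.
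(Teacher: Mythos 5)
Your proposal has a genuine gap: the statement you set out to prove is stronger than the lemma and is in fact false. The group $G=\bigoplus_{1\leq i\leq n}F'$ is countable but not finitely generated ($F'$ is an increasing union of the proper subgroups $F'_{[1/k,\,1-1/k]}$), and by Serre's theorem a countable group with property \textup{(FA)} must be finitely generated; so $G$ admits actions on trees with \emph{no} global fixed point, even though (as the lemma asserts) every element is elliptic in every action. Hence the ``cleaner route'' of producing a global fixed point for $G$, and the Helly argument built on fixed-point subtrees for the factors $F_i'$, cannot work. The fallback principles you invoke are also not valid: (i) simplicity does \emph{not} forbid a nontrivial amalgam splitting (the Burger--Mozes groups are simple and split as amalgams of free groups, so the ``normal core argument'' only shows the action on the Bass--Serre tree is faithful, not that it is trivial); and (ii) absence of unbounded homogeneous quasimorphisms (e.g.\ via uniform perfectness and Bavard duality) does \emph{not} force every element to be elliptic --- translation length on a tree is not a quasimorphism, and there are uniformly perfect groups with no nontrivial homogeneous quasimorphisms that act on trees with hyperbolic elements (irreducible cocompact lattices in products of trees, or $\mathrm{SL}_2(\bZ[1/p])$ on its Bruhat--Tits tree). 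So each of the three routes you sketch breaks down at the decisive step.

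The paper's proof is entirely different and exploits the dynamics of $G$ on the interval rather than abstract rigidity of $F'$. Assume some $f\neq e$ is hyperbolic with axis $L$. Realize $G$ on $[0,n]$ with the $i$-th summand supported on $[i-1,i]$, and choose dyadic intervals $J_i\subset(i-1,i)$ disjoint from $\Supp(f)$. The copies $F_i'$ of $F'$ supported on the $J_i$ commute with $f$, hence preserve $L$ and map to $\mathrm{Isom}(\bZ)\cong\bZ\rtimes\bZ/2\bZ$; simplicity of $F'$ (here is where the absence of $\bZ$- and index-two quotients is genuinely used, applied to the \emph{centralizer} rather than to all of $G$) forces $\Xi=\bigoplus_i F_i'$ to fix $L$ pointwise, so every element of $\Xi$ is elliptic. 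Transitivity of $F'$ on each factor then lets one conjugate $f$ into $\Xi$, so $f$ is conjugate to an elliptic element --- contradicting hyperbolicity. If you want to repair your write-up, this ``commute onto the axis, then conjugate in'' trick is the missing idea; note that it proves exactly pointwise ellipticity and nothing more, which is all the lemma claims.
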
 

\begin{proof}
Consider such an action of $G$ on a simplicial tree $T$.
Assume by way of contradiction that $f\in G\setminus \{e\}$ does not admit a fixed point on $T$.
Then $f$ acts on $T$ as a hyperbolic element with a unique translation axis $L$ which is a simplicial line.

Consider the standard dynamical realisation of $F'<\textup{Homeo}^+[0,1]$.
We view the direct sum $G$ as $G<\textup{Homeo}^+[0,n]$,
where the $i$'th summand acts on an interval $[i-1,i]$ for $0< i\leq n, i\in \mathbf{N}$ in the standard fashion and fixes $[0,n]\setminus [i,i+1]$ pointwise. 

We find dyadic intervals $\{J_i\mid 1\leq i\leq n\}$ such that:
\begin{enumerate}
\item $J_i\subset (i-1,i)$.
\item $\Supp(f)\cap (\bigcup_{1\leq i\leq n} J_i)=\emptyset$
\end{enumerate}

Let $F_i'$ be the copy of $F'$ supported on $\interior(J_i)$.
Since each element of $F_i'$ commutes with $f$,
and since $F_i'$ is simple, it follows that $F_i'$ fixes the axis $L$ pointwise.
It follows that the group $\Xi=\bigoplus_{1\leq i\leq n}F_i'$, where each $F_i$ is supported on $J_i$, fixes the axis $L$ pointwise and hence comprises entirely of elliptic elements.

Using the transitivity of the action of $F'$, we can find an element $g\in G$ such that $$\Supp(g^{-1} f g)\subset \bigcup_{1\leq i\leq n}\interior(J_i)$$
In particular, it follows that $g^{-1} fg\in \Xi$ and hence it must fix a point in the tree.
This is a contradiction.
\end{proof}

{\em Proof of the second part of Theorem \ref{notkazhdan}}
Let $T$ be a simplicial tree upon which $G_{\rho}$ admits an action by simplicial automorphisms.
We shall find a generating set $S_{\rho}'$ for $G_{\rho}$ that satisfies the hypothesis of Proposition \ref{TreeProp1}.
It is an elementary exercise to modify the generating set $S_{\rho}$ to a generating set $S_{\rho}'$ with the following property.
For all $f,g\in S_{\rho}'$, the elements $f$ and $g$ satisfy that the following.
There exists a compact interval $I$ with nonempty interior such that $$x\cdot f=x, \qquad x\cdot g=x \qquad \text{ for all } x\in I.$$ 
This can be done by replacing the generators of $H<F$ in Definition \ref{groupH} by generators with small support.

It follows from Proposition \ref{Fprime} that for each pair $f,g\in S_{\rho}'$, there is a subgroup $\Xi<G_{\rho}$
such that:
\begin{enumerate}
\item $\Xi$ is isomorphic to a finite direct sum of copies of $F'$.
\item $f,g\in \Xi$.
\end{enumerate}
From Lemma \ref{TreeLem1} it follows that $f,g,fg$ admit fixed points.
It follows that $S_{\rho}'$ is a generating set for $G_{\rho}$ that satisfies the hypothesis of Proposition \ref{TreeProp1}.
It follows that the action of $G_{\rho}$ admits a fixed point.

\bibliographystyle{amsalpha}
\bibliography{bib}

\end{document}